\renewenvironment{enumerate}[1][]
{\begin{enumerat}[#1]\setlength{\itemsep}{6pt}}{\end{enumerat}}
\newenvironment{enuma}{\begin{enumerate}[{\rm(a) }]}{\end{enumerate}}
\renewenvironment{itemize}
{\begin{itemiz}\setlength{\itemsep}{6pt} \setlength{\topsep}{28pt}
\setlength{\itemindent}{-20pt}\setlength{\leftmargin}{50pt}%
}{\end{itemiz}}
\definecolor{darkgreen}{RGB}{0,120,0}
\definecolor{darkred}{RGB}{200,0,0}
\definecolor{bluegreen}{RGB}{0,100,100}
\definecolor{newestcolor}{RGB}{200,100,0}
\newcommand{\fr}[1]{}
\numberwithin{table}{section}
\newlength{\short}
\newcommand{\4}[1]{\protect\widebar{#1}}
\newcommand{\5}[1]{\widehat{#1}}
\newcommand{\9}[1]{{}^{#1}\!}
\newcommand{\bb}{\mathfrak{b}}
\newcommand{\qq}{\mathfrak{q}}
\def\pair[#1,#2]{[\hskip-1.5pt[#1,#2]\hskip-1.5pt]}
\let\oldcirc=\circ
\renewcommand{\circ}{\mathchoice
    {\mathbin{\scriptstyle\oldcirc}}{\mathbin{\scriptstyle\oldcirc}}
    {\mathbin{\scriptscriptstyle\oldcirc}}
    {\mathbin{\scriptscriptstyle\oldcirc}}}
\def\beq#1\eeq{\begin{equation*}#1\end{equation*}}
\def\beqq#1\eeqq{\begin{equation}#1\end{equation}}
\numberwithin{equation}{section}
\newtheorem{Thm}{Theorem}[section]
\newtheorem{Prop}[Thm]{Proposition}
\newtheorem{Lem}[Thm]{Lemma}
\newtheorem{Defi}[Thm]{Definition}
\newtheorem{Ex}[Thm]{Example}
\newtheorem{Th}{Theorem}
\newcommand{\widebar}[1]
      {\overset{{\mskip1mu\leaders\hrule height0.4pt\hfill\mskip1mu}}{#1}
      \vphantom{#1}}
\newcounter{let} \setcounter{let}{0}
\loop\stepcounter{let}
\edef\csname cal\alph{let}\endcsname%
\loop\stepcounter{let}
\edef\csname scr\alph{let}\endcsname%
\newcommand{\tdef}[2][]{\expandafter\newcommand\csname#2\endcsname%
{#1\textup{#2}}}
\newcommand{\fdef}[1]{\expandafter\newcommand\csname#1\endcsname%
{\mathfrak{#1}}}
\newcommand{\bbdef}[1]{\expandafter\newcommand%
\csname#1\endcsname{\mathbb{#1}}}
\newcommand{\itdef}[1]{\expandafter\newcommand\csname#1\endcsname%
{\textit{#1}}}
\newcommand{\ON}{\textit{O'N}}
\newcommand{\POmega}{\textit{P}\varOmega}
\newcommand{\pSL}{\textit{(P)SL}}
\newcommand{\pSU}{\textit{(P)SU}}
\newcommand{\gee}{\varepsilon}
\newcommand{\sminus}{\smallsetminus}
\newcommand{\lie}[3]{\def\test{#2}\def\tst{G}\ifx\test\tst{{}^{#1}#2_{#3}}
\else{{}^{#1}\!#2_{#3}}\fi}
\renewcommand{\*}{\,\lower6pt\hbox{\Large{\textup{*}}}\,}
\newcommand{\syl}[2]{\textup{Syl}_{#1}(#2)}
\newcommand{\sylp}[1]{\syl{p}{#1}}
\renewcommand{\Im}{\textup{Im}}
\newcommand{\autf}{\Aut_{\calf}}
\newcommand{\outf}{\Out_{\calf}}
\newcommand{\homf}{\Hom_{\calf}}
\newcommand{\isof}{\Iso_{\calf}}
\newcommand{\defeq}{\overset{\textup{def}}{=}}
\newcommand{\mxfoura}[8]{\left(\begin{smallmatrix}#1&#2&#3&#4\\#5&#6&#7&#8}
\newcommand{\mxfourb}[8]{\\#1&#2&#3&#4\\#5&#6&#7&#8\end{smallmatrix}\right)}
\renewcommand{\:}{\colon}
\newcommand{\nsg}{\trianglelefteq}
\let\nnsg=\ntrianglelefteq
\newcommand{\til}[1]{\widetilde{#1}}
\renewcommand{\gg}{\mathbb{G}}
\newcommand{\gen}[1]{{\langle}#1{\rangle}}
\newcommand{\Gen}[1]{{\bigl\langle}#1{\bigr\rangle}}
\newcommand{\longleft}[1]{\;{\leftarrow%
\count255=0 \loop \mathrel{\mkern-6mu}%
    \relbar\advance\count255 by1\ifnum\count255<#1\repeat}\;}
\newcommand{\longright}[1]{\;{\count255=0 \loop \relbar\mathrel{\mkern-6mu}%
    \advance\count255 by1\ifnum\count255<#1\repeat\rightarrow}\;}
\newcommand{\Right}[2]{\overset{#2}{\longright#1}}
\newcommand{\RIGHT}[3]{\mathrel{\mathop{\kern0pt\longright#1}
        \limits^{#2}_{#3}}}
\newcommand{\LEFT}[3]{\mathrel{\mathop{\kern0pt\longleft#1}\limits^{#2}_{#3}}
}
\newcommand{\dRIGHT}[3]{\mathrel{%
   \mathop{\vcenter{\baselineskip=0pt\hbox{$\kern0pt\longright#1$}%
   \hbox{$\kern0pt\longright#1$}}}\limits^{#2}_{#3}}}
\newcommand{\LRIGHT}[3]{\mathrel{%
   \mathop{\vcenter{\baselineskip=0pt\hbox{$\kern0pt\longleft#1$}%
   \hbox{$\kern0pt\longright#1$}}}\limits^{#2}_{#3}}}
\newcommand{\RLEFT}[3]{\mathrel{%
   \mathop{\vcenter{\baselineskip=0pt\hbox{$\kern0pt\longright#1$}%
   \hbox{$\kern0pt\longleft#1$}}}\limits^{#2}_{#3}}}
\newcommand{\onto}[1]{\;{\count255=0 \loop \relbar\mathrel{\mkern-6mu}%
    \advance\count255 by1
    \ifnum\count255<#1 \repeat \twoheadrightarrow}\;}
\newcommand{\Onto}[2]{\overset{#2}{\onto#1}}
\let\too=\longrightarrow
\let\xto=\xrightarrow
\newcommand{\quotfus}[2][p]{\Gamma_{#1'}(#2)}
\newcommand{\thet}[2]{\theta_{#1}^{(#2)}}
\begin{document}


\title{Simplicity of fusion systems of finite simple groups}

\author{Bob Oliver}
\address{Universit\'e Sorbonne Paris Nord, LAGA, UMR 7539 du CNRS, 
99, Av. J.-B. Cl\'ement, 93430 Villetaneuse, France.}
\email{bobol@math.univ-paris13.fr}
\thanks{B. Oliver is partially supported by UMR 7539 of the CNRS}

\author{Albert Ruiz}
\address{Departament de Matem\`atiques, Edifici Cc, Universitat Aut\`onoma de 
Barcelona, 08193 Cerdanyola del Vall\`es (Barcelona), Spain.
\newline\indent Centre de 
Recerca Matem\`atica, Edifici Cc, Campus de Bellaterra, 08193 Cerdanyola del 
Vall\`es (Barcelona), Spain.}
\email{albert@mat.uab.cat}
\thanks{A. Ruiz is partially supported by MICINN grant MTM2016-80439-P and
AGAUR grant 2017-SGR-1725}

\date{}

\subjclass[2000]{Primary 20D05. Secondary 20D20, 20D45}
\keywords{finite simple groups, fusion systems, automorphisms}

\begin{abstract}
We determine for which known finite simple groups $G$ and which primes $p$ 
the $p$-fusion system of $G$ is simple. This means first collecting 
together the results that were already known (and correcting two errors made 
in an earlier study of this question), and then handling the remaining 
cases. At the same time, we develop some new tools to use when determining 
$O^{p'}(\calf)$ for arbitrary saturated fusion systems.
\end{abstract}

\maketitle

For a prime $p$ and a finite group $G$, the \emph{fusion system} of $G$ 
over a Sylow $p$-subgroup $S$ of $G$ is the category $\calf_S(G)$ whose 
objects are the subgroups of $S$, and whose morphisms are those 
homomorphisms between subgroups induced by conjugation by elements of $G$. 
The fusion system of $G$ thus encodes its local structure: the conjugacy 
relations among its $p$-subgroups and $p$-elements. Motivated by 
connections with modular representation theory, Puig in the 1990s defined 
the concept of abstract fusion systems (published later in \cite{Puig}): an 
(abstract) fusion system $\calf$ over a finite $p$-group $S$ is a category 
whose objects are the subgroups of $S$, and whose morphisms are injective 
homomorphisms satisfying certain conditions motivated by properties of 
finite groups such as the Sylow theorems. (See Section \ref{s:defs} for 
more detail.)

Normal fusion subsystems and simple fusion systems are defined by analogy 
to those of finite groups. Our main result here is to determine for exactly 
which known finite simple groups $G$ and which primes $p$ the $p$-fusion 
system of $G$ is simple. This question was studied and answered in most 
cases by Aschbacher in Chapter 16 of \cite{A-gfit}, but a few cases (all 
involving groups of Lie type in cross characteristic) were left open, and 
it is on those that we focus here. We also correct two errors found among 
Aschbacher's conclusions (explained in the proof of Proposition 
\ref{t:tame.simple}(d)). One problem of particular interest is that of for 
which $G$ and $p$, the $p$-fusion system of $G$ contains a normal subsystem 
of index prime to $p$ that is exotic, and we are also able to describe this 
situation quite precisely. 


Our detailed group-by-group results are stated in Propositions 
\ref{t:tame.simple} (alternating and sporadic groups and groups of Lie type 
in characteristic $p$), \ref{p:2'-index} (groups of Lie type in odd 
characteristic when $p=2$), \ref{p:p'-ind-1} (classical groups in 
characteristic different from $p$ when $p$ is odd), and \ref{p:p'-ind-2} 
(exceptional groups of Lie type in characteristic different from $p$ for 
$p$ odd). They are then summarized by the following theorem, which will be 
restated with more details as Theorem \ref{F.not.simple}. 

\begin{Th} \label{ThA}
Fix a prime $p$ and a known finite simple group $G$ such that $p\mid|G|$. 
Fix $S\in\sylp{G}$, and set $\calf=\calf_S(G)$. Then either 
\begin{enuma} 

\item $S\nsg\calf$; or 

\item $p=3$ and $G\cong G_2(q)$ for some $q\equiv\pm1$ (mod $9$), in which 
case $|O_3(\calf)|=3$, and $O^{3'}(\calf)<\calf$ is realized by 
$\SL_3^\pm(q)$; or 

\item $p\ge5$, $G$ is one of the classical groups $\PSL_n^\pm(q)$, 
$\PSp_{2n}(q)$, $\varOmega_{2n+1}(q)$, or $\POmega_{2n+2}^\pm(q)$ where $n\ge2$ 
and $q\not\equiv0,\pm1$ (mod $p$), in which case $O^{p'}(\calf)$ is simple 
and exotic; or 

\item $O^{p'}(\calf)$ is simple, and it is realized by a known finite 
simple group $G^*$. 

\end{enuma}
Moreover, in case \textup{(c)}, there is a subsystem $\calf_0\nsg\calf$ 
of index at most $2$ in $\calf$ with the property that for 
each saturated fusion system $\cale$ over $S$ such that 
$O^{p'}(\cale)=O^{p'}(\calf)$, $\cale$ is realizable if and only 
if it contains $\calf_0$. 
\end{Th}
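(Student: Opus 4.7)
The plan is to deduce Theorem A by combining the four case-by-case Propositions \ref{t:tame.simple}, \ref{p:2'-index}, \ref{p:p'-ind-1}, and \ref{p:p'-ind-2}, each of which handles a disjoint family of pairs $(G,p)$ partitioning those allowed in the theorem; outcomes (a)--(d) are then read off as a synthesis of the individual conclusions.

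First I would split into four cases matching the four propositions: (i) $G$ alternating, sporadic, or of Lie type in defining characteristic $p$, handled by Proposition \ref{t:tame.simple}; (ii) $G$ of Lie type in odd characteristic with $p=2$, handled by Proposition \ref{p:2'-index}; (iii) $G$ a classical group in cross characteristic with $p$ odd, handled by Proposition \ref{p:p'-ind-1}; and (iv) $G$ an exceptional group of Lie type in cross characteristic with $p$ odd, handled by Proposition \ref{p:p'-ind-2}. For each pair $(G,p)$, exactly one of (a)--(d) then follows from the corresponding proposition. Outcome (b) is unique to $(G_2(q),3)$ with $q\equiv\pm1\pmod 9$; since $q\equiv\pm1\pmod 9$ forces $\gcd(q,3)=1$, this pair falls into family (iv) and is flagged explicitly in Proposition \ref{p:p'-ind-2}. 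Outcome (c) occurs only among the classical cross-characteristic pairs treated in Proposition \ref{p:p'-ind-1}, when $p\ge 5$ and $q\not\equiv 0,\pm 1\pmod p$; all remaining pairs fall under (a) or (d).

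For the moreover statement, I would extract from the proof of Proposition \ref{p:p'-ind-1} the detailed structure of $\calf$ in case (c): there, $\calf$ is realized on a Sylow $p$-subgroup of the normalizer of a suitable maximal torus of $G$, and $\calf/O^{p'}(\calf)$ has order at most $2$, the quotient being detected by a graph or graph-field automorphism of the ambient algebraic group. I would then define $\calf_0$ to be the subsystem generated by $O^{p'}(\calf)$ together with the outer automorphisms induced by the inner-diagonal extension of $G$, and verify: (i) $[\calf:\calf_0]\le 2$ by inspection of $\Out(G)$ at $p$; (ii) any realizable $\cale$ over $S$ with $O^{p'}(\cale)=O^{p'}(\calf)$ must contain $\calf_0$, since a finite realization of $\cale$ necessarily induces these diagonal automorphisms on the inner subsystems already present; and (iii) conversely, every $\cale$ containing $\calf_0$ arises as the $p$-fusion system of $G$ itself or of an extension by a graph or graph-field automorphism.

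The main obstacle is the exoticness assertion in (c): proving that $O^{p'}(\calf)$ in the classical cross-characteristic cases at $p\ge 5$ is not the $p$-fusion system of any finite group. This requires, inside Proposition \ref{p:p'-ind-1}, a CFSG-based analysis ruling out all candidate finite realizations, together with the new tools developed in this paper for computing $O^{p'}(\calf)$ of an arbitrary saturated fusion system. Once that exoticness is in hand, the rest of Theorem A is bookkeeping.
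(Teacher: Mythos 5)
Your overall division into the four case-by-case propositions is indeed how the paper organizes things (Theorem \ref{ThA} is proved as Theorem \ref{F.not.simple}), but as written your synthesis already has holes before the ``moreover'' clause. First, in case (d) the propositions alone do not tell you that $O^{p'}(\calf)$ is \emph{realized by a known simple group} when it is proper: for the classical groups with $q\equiv\pm1$ (mod $p$) and $|\quotfus\calf|=2$, one still has to identify a simple group realizing the index-$2$ subsystem, which the paper does in Case 1 of the proof of Theorem \ref{F.not.simple} via Lemma \ref{equiv-simp-1}, Table \ref{tbl:m-mu-theta} and Lemma \ref{theta-F-A}(c) (e.g.\ $O^{p'}(\calf_S(\PSL_n(q)))$ for $q\equiv-1$ is realized by $\POmega_{2k}^{\pm}(q)$). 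Second, the exoticness in (c) is not ``inside Proposition \ref{p:p'-ind-1}'': it is a separate CFSG argument (Lemma \ref{l:exotic}), which you acknowledge but do not supply; since it is also the engine for the ``moreover'' part (see below), it cannot be treated as bookkeeping.

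The genuine gap is in your treatment of the ``moreover'' statement, and it starts with an incorrect picture of case (c): there $\calf/O^{p'}(\calf)$ does \emph{not} have order at most $2$ --- $\quotfus\calf$ is cyclic of order $\ord_p(\pm q)$, resp.\ $\lcm(2,\ord_p(q))$ or half of it, hence of order at least $3$ in many subcases; only $\calf_0$ has index $\le2$ in $\calf$, and it is not ``$O^{p'}(\calf)$ plus inner-diagonal automorphisms'' (by Lemma \ref{diag-aut} diagonal automorphisms of order prime to $p$ do not change the fusion system at all). In the paper $\calf_0=\calf_S(G_0)$ for a simple group $G_0\cong\PSL_n(q)$ or $\POmega_{2n'}^{\pm}(q)$ (Table \ref{tbl:G0-G1}), and the quotient $\autf(A)/\Aut_{O^{p'}(\calf)}(A)$ is generated by coordinatewise Galois twists on the torus $A$, not by graph or graph-field automorphisms. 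More seriously, your argument for the hard direction, ``any realizable $\cale$ with $O^{p'}(\cale)=O^{p'}(\calf)$ must contain $\calf_0$ because a finite realization necessarily induces these diagonal automorphisms,'' is not a proof and cannot be repaired as stated: a priori a finite group realizing $\cale$ need have no relation to $G$ whatsoever, and ruling that out is exactly the content of another application of the CFSG-based Lemma \ref{l:exotic}. The paper's route is: show $\Aut_\cale(A)\le G(m,1,\kappa)\gen{\omega\cdot\Id}$ by a normalizer computation, so $\Aut_\cale(A)\cap G(m,1,\kappa)=G(m,r,\kappa)$; if $r>2$ then $\cale$ itself is exotic (and does not contain $\calf_0$), while if $r\le2$ then $\cale\ge\calf_0$ and $\cale=\calf_S(H)$ for some $G_0\nsg H\le G_1=G_0\textup{-diag}\gen{\psi_{q_0}}$ --- which in turn requires replacing $q$ by $q_0^a$ for a Dirichlet-chosen prime $q_0\equiv\omega$ (mod $p^{\ell+1}$) so that the scalar $\omega\cdot\Id_A$ is induced by a field automorphism, plus the rigidity Lemma \ref{Op'F1=Op'F2} together with the vanishing $H^1(G(m,m,\kappa);A)=0$ to upgrade $\cale\cong\calf_S(H)$ to equality. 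None of these steps appears in your sketch, and ``extensions of $G$ by graph or graph-field automorphisms'' would not produce the required automizers on $A$.
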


The statements in Theorem \ref{ThA} that certain fusion systems are exotic 
are proved using the classification of finite simple groups. The 
theorem has been formulated and proven so that the other statements 
are independent of this.

Here, by analogy with the notation used for finite groups, $O^{p'}(\calf)$ 
denotes the smallest (saturated) fusion subsystem of index prime to $p$ 
(see Definition \ref{d:p'-index} and Proposition \ref{p:thetaF}(b)). Also, 
a saturated fusion system is called \emph{realizable} if it is 
isomorphic to the fusion system of some finite group, and is called 
\emph{exotic} otherwise.

Among the tools developed to determine $O^{p'}(\calf)$ in these cases, 
we note in particular the following, which reduce the information 
needed in two different directions: 
\begin{itemize}

\item Although $O^{p'}(\calf)$ is characterized by its restriction to 
$\calf$-centric subgroups, we can, in fact, get the information we need 
from the smaller family of subgroups that are $\calf$-centric and 
$\calf$-radical, as described in Proposition \ref{p:pi1(Fcr)}.

\item Although subsystems of index prime to $p$ are determined by their 
automizers on $S$, it is sometimes more convenient to deal with the 
automizer of some subgroup $A\nsg S$ that is $\calf$-centric and weakly 
closed in $\calf$ (such as the $p$-power torsion in a maximal torus in a 
group of Lie type). Tools for doing this are developed in Section 
\ref{s:w.cl.}.

\end{itemize}

This paper was originally motivated by our work (still continuing) with 
Carles Broto and Jesper M\o{}ller on tameness of realizable fusion systems 
(see \cite[Definition III.6.3]{AKO}). While trying to determine whether or 
not all realizable fusion systems are tame, we found that it is first 
necessary to understand more precisely the normal fusion subsystems of 
fusion systems of simple groups. Independently of that, some of our results 
are used in recent work of Radha Kessar, Gunter Malle, and Jason 
Semeraro to calculate weights (in the context of the Alperin weight 
conjecture in modular representation theory) attached to exotic fusions 
arising from homotopy fixed points of $p$-compact groups.

We begin the paper with a brief introduction to fusion systems in Section 
\ref{s:defs}, and an introduction to (normal) subsystems of index prime to 
$p$ in Section \ref{s:Op'F}. We then develop tools, in the last part of 
Section \ref{s:Op'F} and in Section \ref{s:w.cl.}, to help determine 
$O^{p'}(\calf)$ for a given fusion system $\calf$. Finally, in Section 
\ref{s:simplicity}, we go through the list of all fusion systems of known 
finite simple groups at primes dividing their order, starting with a survey 
of the cases already handled by Aschbacher in \cite[Chapter 16]{A-gfit}, to 
determine which of them are simple. 

The authors would like to thank very much the referee for reading very 
carefully through the paper and making many helpful suggestions.

\textbf{Notation: } As usual, for a finite group $G$, $O_p(G)$ and 
$O_{p'}(G)$ denote the largest normal subgroups of $p$-power order and 
of order prime to $p$, respectively, while $O^p(G)$ and $O^{p'}(G)$ 
are the smallest normal subgroups of $p$-power index and of index 
prime to $p$. We follow the standard conventions by setting 
$\pSL_n^+(q)=\pSL_n(q)$ and $\pSL_n^-(q)=\pSU_n(q)$, as well as 
$E_6^+(q)=E_6(q)$ and $E_6^-(q)=\lie2E6(q)$.


\section{Definitions and earlier results}
\label{s:defs}

We begin by listing some of the basic definitions and properties of fusion 
systems. We use \cite[Part I]{AKO} as our main reference, but most or all 
of the following definitions are due originally to Puig: first in 
unpublished notes and then in \cite{Puig}.

For a prime $p$, a \emph{fusion system} over a finite $p$-group $S$ is a 
category whose objects are the subgroups of $S$, and whose morphisms are 
injective homomorphisms between subgroups such that for each $P,Q\le S$:
\begin{itemize} 
\item $\homf(P,Q)\supseteq \Hom_S(P,Q) \defeq \{ c_g=(x\mapsto\9gx) 
\,|\, g\in S, ~ \9gP\le Q \}$; and 
\item for each $\varphi\in\homf(P,Q)$, 
$\varphi^{-1}\in\homf(\varphi(P),P)$.
\end{itemize}
Thus $\homf(P,Q)$ denotes the set of morphisms in $\calf$ from $P$ to 
$Q$. We also write $\isof(P,Q)$ for the set of isomorphisms, 
$\autf(P)=\isof(P,P)$, and $\outf(P)=\autf(P)/\Inn(P)$. For 
$P\le S$ and $g\in S$, we set 
	\[ P^\calf = \{\varphi(P)\,|\,\varphi\in\homf(P,S)\} 
	\qquad\textup{and}\qquad 
	g^\calf = \{\varphi(g)\,|\,\varphi\in\homf(\gen{g},S)\}  \]
(the sets of subgroups and elements \emph{$\calf$-conjugate} to $P$ and 
to $g$). 

Since we will need to refer to the Sylow and extension axioms on several 
occasions, the following version of the definition of a saturated fusion 
system seems the most convenient one to give here. (See Definitions I.2.2 
and I.2.4 and Proposition I.2.5 in \cite{AKO}.)

\begin{Defi} \label{d:sfs}
Let $\calf$ be a fusion system over a finite $p$-group $S$.
\begin{enuma} 

\item A subgroup $P\le S$ is \emph{fully normalized} \emph{(fully 
centralized)} in $\calf$ if $|N_S(P)|\ge|N_S(Q)|$ ($|C_S(P)|\ge|C_S(Q)|$) 
for each $Q\in P^\calf$. 

\item The fusion system $\calf$ is \emph{saturated} if it satisfies the 
following two conditions:
\begin{enumerate}[\rm(I) ]

\item \emph{(Sylow axiom)} For each subgroup $P\le S$ fully normalized in 
$\calf$, $P$ is fully centralized and $\Aut_S(P)\in\sylp{\autf(P)}$.

\item \emph{(extension axiom)} For each isomorphism $\varphi\in\isof(P,Q)$ 
in $\calf$ such that $Q$ is fully centralized in $\calf$, $\varphi$ extends 
to a morphism $\4\varphi\in\homf(N_\varphi,S)$ where 
	\[ N_\varphi = \{ g\in N_S(P) \,|\, \varphi c_g \varphi^{-1} \in 
	\Aut_S(Q) \}. \]

\end{enumerate}
\end{enuma}
\end{Defi}

If $G$ is a finite group and $P,Q\le G$, then 
$\Hom_G(P,Q)\subseteq\Hom(P,Q)$ denotes the set of (injective) 
homomorphisms induced by conjugation in $G$. If $S\in\sylp{G}$, then 
$\calf_S(G)$ denotes the fusion system over $S$ where 
$\Hom_{\calf_S(G)}(P,Q)=\Hom_G(P,Q)$ for $P,Q\le S$, and $\calf_S(G)$ is 
saturated by \cite[Theorem I.2.3]{AKO}. A saturated fusion system $\calf$ 
over $S$ will be called \emph{realizable} if $\calf=\calf_S(G)$ for some 
finite group $G$ with $S\in\sylp{G}$, and will be called \emph{exotic} 
otherwise.

We next list some of the terminology used to describe certain 
subgroups in a fusion system.

\begin{Defi} \label{d:subgroups}
Let $\calf$ be a fusion system over a finite $p$-group $S$, and let $P\le 
S$ be a subgroup.
\begin{enuma} 

\item $P$ is \emph{$\calf$-centric} if $C_S(Q)\le Q$ for each $Q\in 
P^\calf$.

\item $P$ is \emph{$\calf$-radical} if $O_p(\outf(P))=1$. 

\item $\calf^c\supseteq\calf^{rc}$ denote the sets of $\calf$-centric, and 
$\calf$-centric $\calf$-radical, subgroups of $S$, and also (depending on 
context) the full subcategories of $\calf$ with those objects.

\item $P$ is \emph{weakly closed} in $\calf$ if $P^\calf=\{P\}$.

\item $P$ is \emph{strongly closed} in $\calf$ if for each $x\in P$, 
$x^\calf\subseteq P$.

\item $P$ is \emph{normal} in $\calf$ ($P\nsg\calf$) if each 
$\varphi\in\homf(Q,R)$ (for $Q,R\le S$) extends to a morphism 
$\4\varphi\in\homf(PQ,PR)$ such that $\4\varphi(P)=P$.

\item $P$ is \emph{central} in $\calf$ if each $\varphi\in\homf(Q,R)$ (for 
$Q,R\le S$) extends to a morphism $\4\varphi\in\homf(PQ,PR)$ such that 
$\4\varphi|_P=\Id_P$.

\item $O_p(\calf)\ge Z(\calf)$ denote the (unique) largest normal and 
central subgroups, respectively, in $\calf$.

\end{enuma}
\end{Defi}

When $X$ is a set of homomorphisms between subgroups of a given finite 
$p$-group $S$, we let $\gen{X}$ denote the smallest fusion system over $S$ 
(not necessarily saturated) that contains $X$ among its morphisms. In other 
words, $\gen{X}$ is the category whose objects are the subgroups of $S$, 
and whose morphisms are composites of restrictions of homomorphisms in 
$X\cup\Inn(S)$. Similarly, when $\cale$ is a fusion system over $S$ or a 
subgroup of $S$, and $X$ is a set of homomorphisms between subgroups of 
$S$, we set $\gen{\cale,X}=\gen{\Mor(\cale)\cup X}$: the smallest 
fusion system over $S$ containing $\cale$ and $X$.

The following version of Alperin's fusion theorem for fusion systems will 
suffice for our purposes here. We refer to Theorem I.3.5 and Proposition 
I.3.3(a) in \cite{AKO}.

\begin{Thm} \label{AFT} 
For each saturated fusion system $\calf$ over a finite $p$-group $S$, 
	\[ \calf = \Gen{ \autf(P) \,\big|\, \textup{$P\in\calf^{rc}$ and is 
	fully normalized in $\calf$} }. \]
Equivalently, each morphism in $\calf$ is a composite of restrictions of 
$\calf$-automorphisms of subgroups that are fully normalized, 
centric, and radical in $\calf$.
\end{Thm}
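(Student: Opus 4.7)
The plan is to prove by downward induction on the index $[S:P]$ that every $\calf$-isomorphism $\varphi \in \isof(P,Q)$ lies in
\[
 \calf_0 \;:=\; \Gen{\autf(R) \bmid R \in \calf^{rc} \text{ and fully normalized in } \calf}.
\]
Since every morphism in $\calf$ is the restriction of such an isomorphism and $\calf_0$ is closed under restrictions and inverses, this will yield $\calf = \calf_0$. The base case $P = S$ is immediate: the Sylow axiom forces $\Inn(S) = \Aut_S(S) \in \sylp{\autf(S)}$, so $\outf(S)$ has order prime to $p$ and $S$ is $\calf$-radical; being trivially $\calf$-centric and fully normalized, $S \in \calf^{rc}$, hence $\autf(S) \subseteq \Mor(\calf_0)$.

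For the inductive step with $P < S$, I pick $R \in P^\calf$ fully normalized (available by the Sylow axiom) together with $\calf$-isomorphisms $\psi\colon P \to R$ and $\psi'\colon Q \to R$. Writing $\varphi = (\psi')^{-1}(\psi'\varphi\psi^{-1})\psi$ with $\psi'\varphi\psi^{-1} \in \autf(R)$, the problem reduces to \textup{(i)} showing every $\calf$-isomorphism onto a fully normalized subgroup lies in $\Mor(\calf_0)$, and \textup{(ii)} showing $\autf(R) \subseteq \Mor(\calf_0)$ for every fully normalized $R$. For \textup{(i)}: Sylow's theorem inside $\autf(R)$, using $\Aut_S(R) \in \sylp{\autf(R)}$, yields $\alpha \in \autf(R)$ with $\alpha \psi \Aut_S(P) \psi^{-1}\alpha^{-1} \leq \Aut_S(R)$, so $N_{\alpha\psi} \supseteq N_S(P) > P$. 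Since $R$ is fully centralized, the extension axiom extends $\alpha\psi$ to a morphism on $N_S(P)$, which lies in $\Mor(\calf_0)$ by induction; restriction puts $\alpha\psi \in \Mor(\calf_0)$, while the leftover factor $\alpha \in \autf(R)$ is handled by \textup{(ii)}.

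For \textup{(ii)}, if $R \in \calf^{rc}$ there is nothing to do. Otherwise I exhibit $x \in N_S(R) \setminus R$ with $\chi c_x \chi^{-1} \in \Aut_S(R)$ for every $\chi \in \autf(R)$: then $x \in N_\chi$, the extension axiom extends $\chi$ to $R\gen{x} > R$, and induction finishes. If $R \notin \calf^c$, any $x \in C_S(R) \setminus R$ works since $c_x|_R = \Id$ is fixed by every $\chi$-conjugation. If $R \in \calf^c \setminus \calf^{rc}$, the preimage $T \leq \autf(R)$ of $O_p(\outf(R)) \neq 1$ is a nontrivial normal $p$-subgroup of $\autf(R)$, so $T \leq \Aut_S(R)$; $\calf$-centricity of $R$ forces any $c_x \in T \setminus \Inn(R)$ to come from $x \in N_S(R) \setminus R$, and normality of $T$ then gives $\chi c_x \chi^{-1} \in T \leq \Aut_S(R)$. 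The main obstacle I expect is exactly this second sub-case: converting the abstract condition $O_p(\outf(R)) \neq 1$ into a usable element of $N_S(R) \setminus R$ requires combining the Sylow axiom (to embed the normal $p$-subgroup into $\Aut_S(R)$) with $\calf$-centricity of $R$ (to lift $T \setminus \Inn(R)$ back to $N_S(R) \setminus R$); once this is done, the induction closes uniformly across all sub-cases.
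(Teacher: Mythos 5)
Your proof is correct, and the induction you set up (downward on $[S:P]$, reducing first to isomorphisms onto a fully normalized representative via Sylow's theorem in $\autf(R)$ plus the extension axiom, then absorbing $\autf(R)$ for $R\notin\calf^{rc}$ by producing an element of $N_\chi$ outside $R$) is the standard argument. Note that the paper does not prove the theorem at all: it quotes it from \cite{AKO} (Theorem I.3.5 together with Proposition I.3.3(a)), where the proof runs along the same lines but yields the sharper statement that $\autf(S)$ together with the automizers of fully normalized \emph{essential} subgroups already generate $\calf$; the paper even remarks, just after Lemma \ref{l:Op'*(F)-cr}, that this stronger form would let one replace $\calf^{cr}$ by the essential subgroups and $S$. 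So your route buys a short, self-contained proof of exactly the version stated, at the cost of not obtaining that finer refinement. One small step you should make explicit: in the sub-case $R\notin\calf^c$ you take ``any $x\in C_S(R)\sminus R$'', which presupposes $C_S(R)\nleq R$. This is true, but only because $R$ is fully normalized and hence fully centralized: if one had $C_S(R)\le R$, then for every $R'\in R^\calf$ one would get $|C_S(R')|\le|C_S(R)|=|Z(R)|=|Z(R')|$, hence $C_S(R')=Z(R')\le R'$, contradicting $R\notin\calf^c$. So full normalization of $R$ is needed in this sub-case as well, not only in the centric-but-not-radical one; with that line added, the induction closes as you describe.
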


We finish the section with a much more specialized lemma: one which will be 
useful in the last section when showing the nonexistence of certain fusion 
systems. 

\begin{Lem} \label{p:p-1} \fr4
Let $p$ be an odd prime, and let $\calf$ be a saturated fusion system over 
a finite $p$-group $S$. Assume that there is an abelian subgroup $A\nsg S$ 
such that $|S/A|=p$, $|[S,A]|\ge p^2$, and $A\nnsg\calf$. Set $G=\autf(A)$ 
and $U=\Aut_S(A)\in\sylp{G}$; then $|\Aut_G(U)|=p-1$.
\end{Lem}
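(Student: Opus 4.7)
Since $|[S,A]|\ge p^2>1$, we cannot have $C_S(A)=S$; combined with $A\le C_S(A)\le S$ and $|S/A|=p$, this forces $C_S(A)=A$, so $U=\Aut_S(A)\cong S/A$ has order $p$ and $\Aut(U)\cong C_{p-1}$. As $A\nsg S$, $A$ is fully normalized in $\calf$, so by the Sylow axiom $A$ is also fully centralized and $U\in\sylp{G}$. For any $\alpha\in N_G(U)$, $N_\alpha=N_S(A)=S$, so the extension axiom lifts $\alpha$ to some $\4\alpha\in\autf(S)$ with $\4\alpha(A)=A$; the identity $\alpha c_g\alpha^{-1}=c_{\4\alpha(g)}$ then shows that the conjugation action of $\alpha$ on $U$ corresponds, under $c_s\leftrightarrow sA$, to the induced action of $\4\alpha$ on $S/A$. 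Thus $\Aut_G(U)$ equals the image of the restriction map $\{\beta\in\autf(S):\beta(A)=A\}\to\Aut(S/A)$.

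\textbf{Weak closure of $A$.} I would show next that $A$ is weakly closed in $\calf$. If some $A'\in A^\calf$ differed from $A$, then both would be normal in $S$ (any index-$p$ subgroup of a $p$-group is normal), so $AA'=S$ and $B:=A\cap A'$ would have index $p^2$ in $S$ and lie in $Z(S)$ (being contained in the two abelian groups $A$, $A'$ that generate $S$). From $Z(S)\le C_S(A)=A$ and $[Z(S):B]\mid p$, either $Z(S)=A$ (forcing $[S,A]=1$) or $Z(S)=B$; in the latter case, writing $s\in S\setminus A$ as $s=aa'$ with $a\in A$, $a'\in A'\setminus B$, a short commutator calculation (using $A$ abelian and $[a',x]\in B\le Z(S)$) gives $[s,x]=[a',x]$ for $x\in A$, whence $C_A(s)=C_A(a')=Z(S)=B$ and $|[S,A]|=|A/B|=p$. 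Both cases contradict $|[S,A]|\ge p^2$, so $A$ is weakly closed and every element of $\autf(S)$ preserves $A$.

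\textbf{Surjectivity -- the main obstacle.} It remains to show the restriction map $\autf(S)\to\Aut(S/A)\cong C_{p-1}$ is surjective. I would argue by contradiction, combining Alperin's fusion theorem (Theorem \ref{AFT}) with $A\nnsg\calf$ to locate some $P\in\calf^{rc}$ fully normalized and some $\varphi\in\autf(P)$ that does not extend to an element of $\autf(AP)$ preserving $A$. Weak closure rules out $A\le P$ (the automorphism $\varphi$ would then preserve $A$ automatically), and a direct application of the extension axiom rules out $P\le A$ (a $\varphi\in\autf(P)$ with $P\le A$ always extends to $\autf(A)$). Hence $P$ and $A$ must be incomparable, forcing $AP=S$, $|P/(A\cap P)|=p$, and a canonical isomorphism $P/(A\cap P)\cong S/A$ along which the action of $\varphi$ induces a nonzero element of $\Aut(S/A)$.

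\textbf{Main difficulty.} The delicate point is to promote this induced action into an element of the image of $\autf(S)\to\Aut(S/A)$ lying outside the supposed proper subgroup, producing the required contradiction. This must be done by composing $\varphi$ with elements of $\autf(S)$ and applying the extension axiom to such composites, using the rigidity provided by $|[S,A]|\ge p^2$ (which controls the $s$-action on $A$) and the oddness of $p$ to exclude the degenerate configurations in which the image could remain a proper subgroup of $\Aut(S/A)$.
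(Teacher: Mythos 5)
Your preliminary steps are correct and match the paper's setup: since $|[S,A]|\ge p^2$ forces $C_S(A)=A$, the group $U\cong S/A$ has order $p$, $\Aut_G(U)$ is identified with the image of $\{\beta\in\autf(S)\mid\beta(A)=A\}$ in $\Aut(S/A)\cong C_{p-1}$, and your commutator computation correctly shows that $A$ is the unique abelian subgroup of index $p$ (the paper records this uniqueness in one sentence), so every element of $\autf(S)$ preserves $A$. But the entire content of the lemma is the surjectivity of $\autf(S)\too\Aut(S/A)$, i.e. producing an element whose action on $U$ has order exactly $p-1$, and this is precisely the step you leave open. Your sketch only yields, via Theorem \ref{AFT} and $A\nnsg\calf$, a fully normalized $P\in\calf^{rc}$ with $A\not\le P$ and $P\not\le A$, together with some $\varphi\in\autf(P)$; such a $\varphi$ need not preserve $P\cap A$, need not induce anything nontrivial on $P/(P\cap A)\cong S/A$, and, most importantly, nothing in your argument forces the resulting image in $C_{p-1}$ to be the whole group rather than a proper subgroup. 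The distinction is exactly what the lemma is for: in Case (a.1) of Proposition \ref{p:p'-ind-1} it is used to exclude an automizer of order $(p-1)/2$, so an argument producing merely some nontrivial element of $\Aut(S/A)$ proves nothing.

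The paper closes this gap with a substantial structural input from \cite{indp2}: since $A\nnsg\calf$, Lemmas 2.2(a,c) and 2.3(a,b) there provide a subgroup $P\ne A$ in the classes $\calh$ or $\calb$, maximal among proper $\calf$-centric $\calf$-radical subgroups, with $|N_S(P)/P|=p$ and $O^{p'}(\outf(P))\cong\SL_2(p)$. The Borel subgroup of this $\SL_2(p)$ contains a class of order $p-1$ normalizing $\Out_S(P)\cong C_p$; a representative $\alpha$ of order prime to $p$ extends to $\autf(N_S(P))$ by the extension axiom and then, by maximality of $P$ together with Alperin's fusion theorem, to some $\4\alpha\in\autf(S)$, whose restriction to $A$ has class of order $p-1$ in $\Aut_G(U)$. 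Your proposal contains no substitute for this $\SL_2(p)$ (or any equivalent analysis of $\outf(P)$ for centric radical subgroups not containing $A$), and the concluding remarks about composing $\varphi$ with elements of $\autf(S)$ and exploiting the ``rigidity'' of $|[S,A]|\ge p^2$ do not describe an actual argument; so the proof has a genuine gap at its central point.
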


\begin{proof} Since $|[S,A]|>p$, $A$ is the unique abelian subgroup of 
index $p$ in $S$. By Lemmas 2.2(a,c) and 2.3(a,b) in \cite{indp2} and 
since $A\nnsg\calf$, there is a subgroup $A\ne P<S$ in one of the classes 
$\calh$ or $\calb$ of subgroups defined in \cite[Notation 2.1]{indp2}, 
maximal among all $\calf$-centric $\calf$-radical subgroups properly 
contained in $S$, and with the properties that $|N_S(P)/P|=p$ and 
$O^{p'}(\outf(P))\cong\SL_2(p)$. Choose $\alpha\in O^{p'}(\autf(P))$ of 
order prime to $p$ whose class in $O^{p'}(\outf(P))$ normalizes 
$\Out_S(P)\cong C_p$ and has order $p-1$. Then $\alpha$ extends to an 
element of $\autf(N_S(P))$ by the extension axiom (Definition 
\ref{d:sfs}(b)), and hence by the maximality of $P$ and Theorem \ref{AFT} 
(Alperin's fusion theorem) to some $\4\alpha\in\autf(S)$. Then 
$\4\alpha|_A$ normalizes $U$, and its class in $\Aut_G(U)$ has order $p-1$. 
\end{proof}


\section{Subsystems of index prime to \texorpdfstring{$p$}{p}}
\label{s:Op'F}

Subsystems of fusion systems of index prime to $p$ were first studied by 
Puig \cite[\S\,6.5]{Puig}, and later in \cite[\S\,3,5]{BCGLO2}. 
We begin the section with some basic results, where for simplicity we give 
references only in \cite{AKO} when possible. After covering the basic 
properties, we develop some tools which will be used in the next section to 
determine $O^{p'}(\calf)$ and its index in $\calf$ in specific cases.

Note in the following definition that the analogy for a group $G$ is 
``subgroups of $G$ that contain $O^{p'}(G)$'', not arbitrary subgroups of 
index prime to $p$ in $G$. For example, the fusion system of $S$ is not, in 
general, of index prime to $p$ in an arbitrary fusion system $\calf$ over 
$S$.

\begin{Defi} \label{d:p'-index} \fr2
Let $\calf$ be a saturated fusion system over a finite $p$-group $S$. 
\begin{enuma} 

\item \cite[Theorem I.7.7.d]{AKO} Let $O^{p'}_*(\calf)\le\calf$ be 
the fusion subsystem (not necessarily saturated) generated by the groups 
$O^{p'}(\autf(P))$ for all $P\le S$, and set 
	\begin{align*} 
	\autf^0(S) &= \Gen{\alpha\in\autf(S) \,\big|\, 
	\alpha|_P\in \Hom_{O^{p'}_*(\calf)}(P,S), 
	~\textup{some $P\in\calf^c$} } \\
	\quotfus\calf &= \autf(S)/\autf^0(S). 
	\end{align*}

\item A fusion subsystem $\cale\le\calf$ has \emph{index prime to $p$} in 
$\calf$ if $\cale$ contains $O^{p'}_*(\calf)$; equivalently, if $\cale$ is 
a fusion system over $S$ and $\Aut_\cale(P)\ge O^{p'}(\autf(P))$ for each 
$P\le S$. For such a subsystem $\cale\le\calf$, the \emph{index} of 
$\cale$ in $\calf$ is defined to be the index of $\Aut_\cale(S)$ in 
$\autf(S)$.

\end{enuma}
\end{Defi}

Note that $O^{p'}_*(\calf)$ is denoted $\cale_0$ in \cite[Theorem 
I.7.7(d)]{AKO}.

\begin{Lem} \label{l:Ec=Fc}
Let $\calf$ be a saturated fusion system over a finite $p$-group $S$, and 
let $\cale\le\calf$ be a fusion subsystem (not necessarily saturated) 
of index prime to $p$. Then 
\begin{enuma} 
\item $\cale^c=\calf^c$ and $\cale^{cr}=\calf^{cr}$; 
\item a subgroup of $S$ is fully normalized (fully centralized) in $\cale$ 
if and only if it is fully normalized (fully centralized) in $\calf$; and 

\item \emph{(Frattini condition)} for each $P,Q\le S$ and 
$\varphi\in\homf(P,Q)$, there are $\alpha\in\autf(S)$ and 
$\varphi_0\in\Hom_\cale(P,\alpha_1^{-1}(Q))$ 
such that $\varphi=(\alpha|_{\alpha_1^{-1}(Q)})\circ\varphi_0$. 

\end{enuma}
\end{Lem}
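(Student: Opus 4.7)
The plan is to prove (c) first and then deduce (a) and (b) from it. Since $\cale\supseteq O^{p'}_*(\calf)$, it suffices to prove the stronger statement that each $\varphi\in\homf(P,Q)$ factors as $(\alpha|_{\alpha^{-1}(Q)})\circ\varphi_0$ with $\alpha\in\autf(S)$ and $\varphi_0\in\Hom_{O^{p'}_*(\calf)}(P,\alpha^{-1}(Q))$. Let $\calm$ denote the set of morphisms admitting such a factorization. The key preliminary is that $O^{p'}_*(\calf)$ is stable under conjugation by $\autf(S)$: for $\alpha\in\autf(S)$ and $\mu\in O^{p'}(\autf(P))$, the $\calf$-isomorphism $\alpha|_P\colon P\to\alpha(P)$ carries $O^{p'}(\autf(P))$ isomorphically onto $O^{p'}(\Aut_\calf(\alpha(P)))$, so $\alpha\mu\alpha^{-1}$ remains in $O^{p'}_*(\calf)$. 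This conjugation-stability makes $\calm$ closed under composition and inversion, and $\calm$ trivially contains $O^{p'}_*(\calf)$ and restrictions of $\autf(S)$.

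By Alperin's fusion theorem (Theorem~\ref{AFT}) it then suffices to show $\autf(P)\subseteq\calm$ for every fully normalized $P\in\calf^{rc}$. Fix such $P$ and $\beta\in\autf(P)$. The Sylow axiom gives $\Aut_S(P)\in\sylp{\autf(P)}$, and since $O^{p'}(\autf(P))$ contains every $p$-element of $\autf(P)$ it contains $\Aut_S(P)$ as one of its Sylow $p$-subgroups; Sylow's theorem applied inside $O^{p'}(\autf(P))$ yields $\delta\in O^{p'}(\autf(P))$ with $\delta\beta$ normalizing $\Aut_S(P)$, so that $N_{\delta\beta}=N_S(P)$. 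The extension axiom (applicable because $P$ is fully centralized by the Sylow axiom) extends $\delta\beta$ to $\hat\beta\in\Aut_\calf(N_S(P))$ with $\hat\beta(P)=P$. I would then induct on $|S:P|$: if $N_S(P)=S$ then $\hat\beta\in\autf(S)$ and $\beta=\delta^{-1}\cdot(\hat\beta|_P)\in\calm$; otherwise one applies Alperin again to $\hat\beta$, writing it as a composite of restrictions of $\autf(R)$ for fully normalized $R\in\calf^{rc}$ with $R\supsetneq P$ (after transport along $\calf$-isomorphisms to fully normalized representatives where needed), and invokes the inductive hypothesis.

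With (c) in hand, (b) follows immediately: for $P$ fully normalized in $\cale$ and $Q\in P^\calf$, the factorization of an $\calf$-isomorphism from $P$ to $Q$ exhibits $Q':=\alpha^{-1}(Q)\in P^\cale$ with $|N_S(Q)|=|N_S(Q')|$ since $\alpha\in\Aut(S)$, so $|N_S(P)|\ge|N_S(Q')|=|N_S(Q)|$; the reverse inclusion is trivial from $P^\cale\subseteq P^\calf$, and the fully-centralized version is identical. For (a) centricity, the same trick gives $C_S(Q)=\alpha(C_S(Q'))\le\alpha(Q')=Q$ whenever $P\in\cale^c$. For (a) radicality, $\Aut_\cale(P)\supseteq O^{p'}(\autf(P))$ contains every $p$-subgroup of $\autf(P)$, so $O_p(\autf(P))\le\Aut_\cale(P)$ is a normal $p$-subgroup, yielding $O_p(\outf(P))\le O_p(\Out_\cale(P))$; conversely $O_p(\Aut_\cale(P))$ is a normal $p$-subgroup of $O^{p'}(\autf(P))$ (being normal in the larger group $\Aut_\cale(P)$), hence contained in $O_p(O^{p'}(\autf(P)))=O_p(\autf(P))$, and the two coincide.

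The hardest part is the induction step in (c): after extending $\delta\beta$ to $\hat\beta\in\Aut_\calf(N_S(P))$, the subgroup $N_S(P)$ need not be fully normalized or $\calf$-radical, so the inductive hypothesis does not apply directly. Handling this requires a careful reapplication of Alperin's theorem combined with transport along $\calf$-isomorphisms (provided by the extension axiom) to fully normalized representatives, or a more elaborate induction parameter, and this is where most of the technical bookkeeping lies.
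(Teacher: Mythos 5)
Your proposal is correct, but it takes a different (more self-contained) route than the paper: the paper disposes of part (c) and of the equality $\cale^c=\calf^c$ simply by citing \cite[Lemma I.7.6(a)]{AKO}, and then only argues the radical statement (via the chain $O_p(\autf(P))\le O_p(\Aut_\cale(P))\le O_p(O^{p'}(\autf(P)))\le O_p(\autf(P))$) and deduces (b) from (c) exactly as you do. What you have done is reprove the cited lemma from scratch: reduce to $O^{p'}_*(\calf)$, observe that $O^{p'}_*(\calf)$ is stable under conjugation by $\autf(S)$ so that the class of morphisms admitting a factorization is closed under composition and restriction, and then use Alperin's fusion theorem together with the Frattini argument and the extension axiom on fully normalized subgroups in $\calf^{rc}$. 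This is essentially the standard proof of \cite[Lemma I.7.6]{AKO}, so your argument buys self-containedness at the cost of repeating it, while the paper's proof is shorter because it outsources exactly this step. Your deductions of (a) and (b) from (c) match the paper's, and your alternative bookkeeping for radicality is equivalent to the displayed chain above.

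Concerning the step you flag as problematic: there is no real gap there, only a slight misstatement of the induction. Run the downward induction on $|R|$ over the statement ``$\autf(R)$ consists of factorizable morphisms for every fully normalized $R\in\calf^{rc}$,'' rather than over overgroups of a fixed $P$. Given such an $R<S$ and $\beta\in N_{\autf(R)}(\Aut_S(R))$, the extension $\4\beta\in\autf(N_S(R))$ is, by Theorem \ref{AFT}, a composite of restrictions of $\calf$-automorphisms of subgroups that are already fully normalized, centric, and radical and that contain intermediate images of $N_S(R)$, hence have order at least $|N_S(R)|>|R|$; the inductive hypothesis applies to each of these directly, with no transport to fully normalized representatives needed, and closure under composition and restriction then gives $\4\beta$, hence $\beta$, hence all of $\autf(R)$. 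This is exactly how the cited proof proceeds, so your sketch, so organized, closes completely.
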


\begin{proof} Point (c), and the equality $\cale^c=\calf^c$, are shown in 
\cite[Lemma I.7.6(a)]{AKO}. For all $P\le S$, we have 
$O^{p'}(\autf(P)) \le \Aut_\cale(P) \le \autf(P)$, and hence 
	\[ O_p(\autf(P))\le O_p(\Aut_\cale(P))\le O_p(O^{p'}(\autf(P)))
	\le O_p(\autf(P)), \]
where the last inclusion holds because $O_p(G)$ is characteristic 
in $G$ for all $G$. Thus $P$ is $\calf$-radical if and only if it is 
$\cale$-radical, and so $\cale^{cr}=\calf^{cr}$, proving (a).

By (c), for each pair $P\le S$ and $Q\in P^\calf$, there are 
$\alpha\in\autf(S)$ and $\varphi\in\Iso_\cale(P,\alpha^{-1}(Q))$, and 
$\alpha^{-1}(Q)\in P^\cale$ is such that $|N_S(\alpha^{-1}(Q))|=|N_S(Q)|$. 
So $P$ is fully normalized in $\calf$ if and only if it is fully 
normalized in $\cale$, and similarly for fully centralized. 
\end{proof}

The following terminology will be useful when working with fusion 
subsystems of index prime to $p$. 

\begin{Defi} \label{d:multiplicative}
Let $\calf$ be a saturated fusion system over a finite $p$-group $S$, and 
let $\calf_0\subseteq\calf$ be a full subcategory with $S\in\Ob(\calf_0)$. 
A map $\theta\:\Mor(\calf_0)\too\Gamma$ to a group $\Gamma$ will be called 
\emph{multiplicative} if it sends composites in $\calf_0$ to products in 
$\Gamma$ and sends inclusions to the identity. It will be called 
\emph{strongly multiplicative} if in addition, $\theta(O^{p'}(\autf(P)))=1$ 
for each $P\in\Ob(\calf_0)$. 
\end{Defi}

Of course, the condition that $\theta$ send composites to products is 
equivalent to saying that it extends to a functor from $\calf_0$ to 
$\calb(\Gamma)$, where $\calb(\Gamma)$ is the category with one object and 
morphism group $\Gamma$.
In these terms, the basic properties of the group $\quotfus\calf$ and of 
saturated fusion subsystems of index prime to $p$ are summarized as 
follows.

\begin{Prop}[{\cite[Theorem I.7.7]{AKO}}] \label{p:thetaF} \fr2
The following hold for each saturated fusion system $\calf$ over a finite 
$p$-group $S$. 
\begin{enuma}

\item There is a unique multiplicative map 
$\theta_\calf\:\Mor(\calf^c)\too\quotfus\calf$ whose restriction to 
$\autf(S)$ is the natural surjection. Also, $\theta_\calf$ is strongly 
multiplicative, and is universal in the following sense: if $\Gamma$ is a 
group and $\theta\:\Mor(\calf^c)\too\Gamma$ is strongly multiplicative, 
then there is a unique homomorphism $f\:\quotfus\calf\too\Gamma$ such that 
$\theta=f\circ\theta_\calf$.

\item There is a bijection 
	\[ \bigl\{ \textup{subgroups of $\quotfus\calf$} \bigr\} 
	\Right5{\cong} 
	\left\{ \parbox{50mm}{\rm saturated fusion subsystems $\cale\le\calf$ 
	of index prime to $p$} \right\} \]
that sends $H\le\quotfus\calf$ to the fusion subsystem 
$\cale_H=\gen{\theta_\calf^{-1}(H)}$. This subsystem has 
the properties that $(\cale_H)^c=\calf^c$ as sets and 
$(\cale_H)^c=\theta_\calf^{-1}(H)$ as categories. In particular, there is a 
unique smallest fusion subsystem $O^{p'}(\calf)=\gen{\theta_\calf^{-1}(1)}$ 
of index prime to $p$, and 
	\[ \Aut_{O^{p'}(\calf)}(S) = \autf^0(S) 
	= \Ker(\theta_\calf|_{\Aut_\calf(S)}). \]

\end{enuma}
\end{Prop}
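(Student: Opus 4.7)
For part (a), I would begin by checking that $\autf^0(S)$ is normal in $\autf(S)$: this rests on the fact that $O^{p'}(\autf(P))$ is characteristic in $\autf(P)$, so any $\autf(S)$-conjugate of an element whose restriction to $P\in\calf^c$ lies in $O^{p'}_*(\calf)$ again has such a restriction (now on the image subgroup). The map $\theta_\calf$ itself is then constructed by first decomposing an arbitrary morphism $\varphi\in\Mor(\calf^c)$ via Alperin's fusion theorem (Theorem \ref{AFT}) into a composite of restrictions of automorphisms $\alpha_i\in\autf(R_i)$ with each $R_i\in\calf^{rc}$ fully normalized; for each such $\alpha_i$, the Sylow axiom yields $\Aut_S(R_i)\in\sylp{\autf(R_i)}$, so after modifying $\alpha_i$ by an element of $O^{p'}(\autf(R_i))$ we may assume it normalizes $\Aut_S(R_i)$, and the extension axiom then lifts it step by step along the normalizer tower to some $\widehat\alpha_i\in\autf(S)$. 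Setting $\theta_\calf(\varphi)$ to be the product of the classes of the $\widehat\alpha_i$ in $\quotfus\calf$ gives the candidate definition; strong multiplicativity is built in, and the universal factoring $f$ is forced uniquely by the very definition of $\autf^0(S)$.

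The delicate step in (a) is showing well-definedness: different Alperin decompositions and different choices of lifts must yield the same element of $\quotfus\calf$. I would handle this by observing that any two lifts of a given $\alpha_i$ differ by an element of $O^{p'}(\autf(R_i))$ whose extension to $\autf(S)$ lies in $\autf^0(S)$, while any two Alperin decompositions of the same $\varphi$ differ by relations among the $\autf(R_i)$ that, once lifted to $S$, also factor through $\autf^0(S)$ by construction.

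For part (b), I would define $\cale_H:=\gen{\theta_\calf^{-1}(H)}$ for each subgroup $H\le\quotfus\calf$. That $\cale_H$ contains $O^{p'}_*(\calf)$, hence has index prime to $p$, is immediate since $O^{p'}(\autf(P))\subseteq\theta_\calf^{-1}(1)\subseteq\theta_\calf^{-1}(H)$; multiplicativity of $\theta_\calf$ shows $\theta_\calf^{-1}(H)$ is already closed under composition in $\calf^c$, giving $(\cale_H)^c=\theta_\calf^{-1}(H)$ as a category. The candidate inverse sends a fusion subsystem $\cale\le\calf$ of index prime to $p$ to $\theta_\calf(\Aut_\cale(S))\le\quotfus\calf$; bijectivity follows from the observation that such $\cale$ is generated as a fusion system by $O^{p'}_*(\calf)$ together with $\Aut_\cale(S)$, via an Alperin-type argument using Lemma \ref{l:Ec=Fc}. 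The main obstacle is proving $\cale_H$ is saturated. The Sylow axiom transfers easily: $\Aut_S(P)$ remains Sylow in $\Aut_{\cale_H}(P)\supseteq O^{p'}(\autf(P))$, and fully normalized/centralized notions coincide with those in $\calf$ by Lemma \ref{l:Ec=Fc}(b). The extension axiom is the hard part: given $\varphi\in\Iso_{\cale_H}(P,Q)$ with $Q$ fully centralized, I would first extend $\varphi$ in $\calf$ to $\widetilde\varphi\in\homf(N_\varphi,S)$ using saturation of $\calf$, then adjust $\widetilde\varphi$ by an $\autf(S)$-element of $\theta_\calf^{-1}(H)$ constructed from the hypothesis $\theta_\calf(\varphi)\in H$ together with the normality of $\autf^0(S)$, so that the modified extension lies in $\cale_H$. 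The special case $H=1$ then recovers $O^{p'}(\calf)=\gen{\theta_\calf^{-1}(1)}$ and the identification $\Aut_{O^{p'}(\calf)}(S)=\autf^0(S)$.
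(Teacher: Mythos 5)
First, a point of comparison: the paper does not prove this proposition at all — it is imported wholesale from \cite[Theorem I.7.7]{AKO} (going back to \cite[\S\,5]{BCGLO2}), and the paper's ``proof'' is just that citation. So your proposal has to stand on its own, and at the two places where the real content lies it does not yet. The first is well-definedness of $\theta_\calf$ in (a). You propose to define $\theta_\calf(\varphi)$ by choosing an Alperin decomposition of $\varphi$ into automorphisms of fully normalized subgroups in $\calf^{cr}$, correcting each by an element of $O^{p'}(\autf(R_i))$, lifting to $\autf(S)$, and multiplying classes; independence of choices is then dismissed by saying that two decompositions ``differ by relations among the $\autf(R_i)$ that, once lifted to $S$, factor through $\autf^0(S)$ by construction.'' That sentence is not an argument: nothing in the construction controls the relations among Alperin generators, and showing that every such relation lifts into $\autf^0(S)$ is exactly the statement to be proved. (Moreover the ``normalizer tower'' lifting requires a further correction by $O^{p'}(\autf(\cdot))$ at each stage, not only at $R_i$, so even a single lift is only determined modulo the subgroup in question.) The known proofs avoid comparing decompositions head-on: in \cite{AKO} one uses the universal multiplicative map $\Mor(\calf^c)\too\pi_1(|\calf^c|)$, where multiplicativity and independence of choices are automatic, and the work goes into identifying the relevant quotient with $\autf(S)/\autf^0(S)$; alternatively one first proves the Frattini-type factorization of every morphism as (restriction of an element of $\autf(S)$)$\,\circ\,$(morphism in the subsystem generated by the $O^{p'}(\autf(P))$), as in Lemma \ref{l:Ec=Fc}(c), defines $\theta_\calf$ via the $\autf(S)$-factor, and then well-definedness is essentially the definition of $\autf^0(S)$. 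Your sketch would have to be rebuilt along one of these lines.

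The second gap is saturation of $\cale_H$ in (b). Your verification of the Sylow and extension axioms only makes sense for subgroups in $\calf^c$, where $\theta_\calf$ is defined (and even there, the ``adjust by an element of $\theta_\calf^{-1}(H)\cap\autf(S)$'' step needs the Frattini condition for subsystems of index prime to $p$, i.e.\ Lemma \ref{l:Ec=Fc}(c), to produce the adjusting automorphism — compare the computation $1=\theta(\alpha)$ in the proof of Proposition \ref{p:pi1(Fcr)}). Saturation, however, is a condition on all subgroups of $S$; passing from the centric family to all subgroups requires the criterion of \cite[Theorem 2.2]{BCGLO1} ($\scrp$-generated plus $\scrp$-saturated plus condition ($*$) implies saturated), which is precisely how the paper argues in its own Proposition \ref{p:pi1(Fcr)}, and your plan never addresses this step. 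Likewise the identification $(\cale_H)^c=\theta_\calf^{-1}(H)$ as categories does not follow merely from $\theta_\calf^{-1}(H)$ being closed under composition: a morphism of $\cale_H$ between centric subgroups is a composite of restrictions of morphisms in $\theta_\calf^{-1}(H)$, and those restrictions may pass through non-centric subgroups where $\theta_\calf$ is undefined, so an extra argument is needed to see that no new morphisms between centric subgroups are created. In short, the overall architecture of your plan matches the proof in \cite{AKO}/\cite{BCGLO2}, but the well-definedness of $\theta_\calf$ and the saturation of $\cale_H$ — the two hard points — are asserted rather than proved.
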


\begin{proof} Point (a) follows from \cite[Theorem I.7.7(d)]{AKO} 
(where $\quotfus\calf$ is defined to be the target of the universal 
strongly multiplicative map on $\Mor(\calf^c)$). Point (b) is shown in 
\cite[Theorem I.7.7(b,c)]{AKO}. 
\end{proof}

By definition, $\quotfus\calf=1$ if $\autf(S)$ is generated by elements 
$\alpha\in\autf(S)$ such that $\alpha|_P\in O^{p'}(\autf(P))$ for some 
$P\in\calf^c$, and in this case, $O^{p'}(\calf)=\calf$ by Proposition 
\ref{p:thetaF}. In particular, this always applies if $\outf(S)=1$.

There is also a geometric interpretation of universal multiplicative maps. 
For each fusion system $\calf$ over a $p$-group $S$, and each full 
subcategory $\calf_0\subseteq\calf$ containing $S$, let 
$\theta\:\Mor(\calf_0)\too\pi_1(|\calf_0|,*)$ be the map that sends 
$\varphi\in\homf(P,Q)$ to the loop (based at $*=[S]$) formed by the edges 
corresponding to morphisms $\incl_P^S$, $\varphi$, and $\incl_Q^S$. Then 
$\theta$ is the universal multiplicative map on $\Mor(\calf_0)$ (see, e.g., 
\cite[Proposition III.2.8]{AKO}). By \cite[Theorem III.4.19]{AKO}, 
$\theta_\calf$ is universal among all multiplicative maps defined on 
$\Mor(\calf^c)$ (not only those that are strongly multiplicative), and 
hence $\quotfus\calf\cong\pi_1(|\calf^c|)$.

We next recall some standard notation for isomorphisms and automorphisms of 
fusion systems. If $\beta\:S\xto{~\cong~}T$ is an 
isomorphism between finite $p$-groups, and $\calf$ is a fusion system over 
$S$, then we set $\9\beta\calf=\beta\calf\beta^{-1}$: the fusion 
system over $T$ for which $\Hom_{\9\beta\calf}(\beta(P),\beta(Q))$ 
(for $P,Q\le S$) is the set of all composites 
$(\beta|_Q)\circ\varphi\circ(\beta|_P)^{-1}$ for $\varphi\in\homf(P,Q)$. 
When $\cale$ and $\calf$ are fusion systems over finite $p$-groups $T$ 
and $S$, respectively, $\cale\cong\calf$ means that $\cale=\9\beta\calf$ 
for some $\beta\in\Iso(S,T)$. Using this notation, we define, for a fusion 
system $\calf$ over $S$, 
	\[ \Aut(\calf) = \{\beta\in\Aut(S) \,|\, \9\beta\calf=\calf\} 
	\qquad\textup{and}\qquad \Out(\calf)=\Aut(\calf)/\autf(S). \]
This notation is useful when classifying extensions of index 
prime to $p$.

Rather than define normal fusion subsystems here, we note for the purpose 
of the following lemma that if $\cale\le\calf$ has index prime to $p$, 
where both are saturated fusion systems over $S$, then $\cale\nsg\calf$ if 
and only if $\9\alpha\cale=\cale$ for each $\alpha\in\autf(S)$. By 
\cite[Definition I.6.1]{AKO}, there are two conditions we need to check, of 
which the Frattini condition follows from Lemma \ref{l:Ec=Fc}(c), while the 
extension condition holds since $\cale$ and $\calf$ are fusion systems over 
the same $p$-group.

\begin{Lem} \label{F_0<|F_i} \fr2
Let $\calf_0$ be a saturated fusion system over a finite $p$-group $S$. 
Then the map 
	\[ \Psi\: \left\{ \parbox{50mm}{saturated fusion systems $\calf$ 
	over $S$ with $\calf_0\nsg\calf$ normal of index prime to $p$} \right\} 
	\Right5{\cong} 
	\left\{ \parbox{40mm}{subgroups of $\Out(\calf_0)$ of order prime 
	to $p$} \right\} \]
which sends $\calf$ to $\autf(S)/\Aut_{\calf_0}(S)\le\Out(\calf_0)$ is a 
bijection.
\end{Lem}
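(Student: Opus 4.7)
The plan is to check well\nobreakdash-definedness, establish injectivity via the Frattini condition (Lemma~\ref{l:Ec=Fc}(c)), and prove surjectivity by constructing the preimage of a given subgroup explicitly; the surjectivity step will be the main technical hurdle, centered on verifying saturation.

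First I would verify that $\Psi$ is well-defined. Given $\calf_0 \nsg \calf$ of index prime to $p$, the characterization stated in the paragraph preceding the lemma gives $\autf(S) \le \Aut(\calf_0)$. Since any $\beta \in \Aut(\calf_0)$ sends $\calf_0$-automorphisms of $S$ to $\calf_0$-automorphisms of $S$, we have $\Aut_{\calf_0}(S) \nsg \Aut(\calf_0)$; in particular $\autf(S)/\Aut_{\calf_0}(S)$ is a genuine subgroup of $\Out(\calf_0)$, and it has order prime to $p$ by Definition~\ref{d:p'-index}(b).

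For injectivity, suppose $\calf_0 \nsg \calf,\calf'$ are two fusion systems of $p'$-index over $\calf_0$ with $\autf(S) = \Aut_{\calf'}(S) =: \Delta$. Applying Lemma~\ref{l:Ec=Fc}(c) to each of the inclusions $\calf_0 \le \calf$ and $\calf_0 \le \calf'$, every morphism in either system factors as $\alpha \circ \varphi_0$, where $\alpha$ is a restriction of some element of $\Delta$ and $\varphi_0 \in \Mor(\calf_0)$. Hence both $\calf$ and $\calf'$ are generated over $S$ by the same data, so $\calf = \gen{\calf_0,\Delta} = \calf'$.

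The main obstacle is surjectivity. Given $H \le \Out(\calf_0)$ of $p'$-order, let $\Delta \le \Aut(\calf_0)$ be its preimage (so $\Aut_{\calf_0}(S) \le \Delta$ and $\Delta/\Aut_{\calf_0}(S) = H$), and set $\calf = \gen{\calf_0,\Delta}$. Because $\Delta$ normalizes $\calf_0$, each morphism of $\calf$ decomposes as $\delta \circ \varphi_0$ with $\delta \in \Delta$ and $\varphi_0 \in \Mor(\calf_0)$, which gives $\autf(S) = \Delta$; together with the pre-lemma characterization this shows $\calf_0 \nsg \calf$ with index $|H|$ prime to $p$, and $\Psi(\calf) = H$. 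The remaining and hardest point is to verify that $\calf$ is saturated. Since $\calf_0$ is saturated, $\Aut_S(S) \in \sylp{\Aut_{\calf_0}(S)}$; combined with $[\Delta : \Aut_{\calf_0}(S)]$ being prime to $p$, this yields $\Aut_S(S) \in \sylp{\Delta} = \sylp{\autf(S)}$. The Sylow and extension axioms for $\calf$ then reduce, via the factorization $\delta \circ \varphi_0$ together with the action of $\Delta$ on $\calf_0$, to the corresponding axioms for $\calf_0$; this is the standard ``extension by a $p'$-group of fusion-preserving automorphisms'' construction, obtainable from \cite[Theorem~I.7.4]{AKO} (or \cite[\S 5]{BCGLO2}).
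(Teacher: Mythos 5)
Your proof follows essentially the same route as the paper: injectivity via the Frattini decomposition (Lemma \ref{l:Ec=Fc}(c)), which gives $\calf=\gen{\calf_0,\autf(S)}$, and surjectivity by forming $\gen{\calf_0,\Delta}$ and quoting the saturation/normality result for extensions of $\calf_0$ by a $p'$-group of fusion-preserving automorphisms, which is exactly what the paper does by citing \cite[Theorem 5.7(a)]{BCGLO2}. One caveat: \cite[Theorem I.7.4]{AKO} concerns subsystems of $p$-power index and is not the right reference for that last step; your parenthetical citation of \cite[\S\,5]{BCGLO2} is the correct source.
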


\begin{proof} If $\calf_1$ and $\calf_2$ are two such extensions of 
$\calf_0$, and $\Aut_{\calf_1}(S)=\Aut_{\calf_2}(S)$, then 
$\calf_1=\calf_2$ since $\calf_i=\gen{\calf_0,\Aut_{\calf_i}(S)}$ (the 
Frattini condition holds for $\calf_0\le\calf_i$ by Lemma 
\ref{l:Ec=Fc}(c)). Thus $\Psi$ is injective. 

If $\pi=\til\pi/\Aut_{\calf_0}(S)\le\Out(\calf_0)$ has order prime to $p$, 
then by \cite[Theorem 
5.7(a)]{BCGLO2}, the fusion system $\calf\defeq\gen{\calf_0,\til\pi}$ is 
saturated and contains $\calf_0$ as a normal subsystem, and 
$\Psi(\calf)=\Gamma/\Aut_{\calf_0}(S)$. So $\Psi$ is onto.
\end{proof}

The injectivity of $\Psi$ in Lemma \ref{F_0<|F_i} implies that if $\calf_1$ 
and $\calf_2$ are two saturated fusion systems over the same $p$-group $S$, 
then $\calf_1=\calf_2$ if $O^{p'}(\calf_1)=O^{p'}(\calf_2)$ and 
$\Aut_{\calf_1}(S)=\Aut_{\calf_2}(S)$. Later, in Lemma \ref{Op'F1=Op'F2}, 
we will replace this by a more general hypothesis.

If $\calf_1$ and $\calf_2$ are fusion systems over finite $p$-groups $S_1$ 
and $S_2$, respectively, then 
	\[ \calf_1\times\calf_2 = \Gen{(\alpha_1,\alpha_2)\in
	\Hom(P_1\times P_2,Q_1\times Q_2) \,\big|\, 
	\alpha_i\in\Hom_{\calf_i}(P_i,Q_i) ~\textup{for $i=1,2$} }. \]
By \cite[Theorem I.6.6]{AKO}, $\calf_1\times\calf_2$ is saturated if 
$\calf_1$ and $\calf_2$ are both saturated.

\begin{Lem} \label{l:F1xF2}
\begin{enuma} 
\item If $G_1$ and $G_2$ are finite groups with $S_i\in\sylp{G_i}$ for 
$i=1,2$, then $\calf_{S_1\times S_2}(G_1\times G_2) = \calf_{S_1}(G_1) 
\times \calf_{S_2}(G_2)$. 

\item For each pair of fusion systems $\calf_1$ and $\calf_2$ over $S_1$ 
and $S_2$, 
	\[ O^{p'}(\calf_1\times\calf_2)=O^{p'}(\calf_1)\times 
	O^{p'}(\calf_2) \quad\textup{and hence}\quad 
	\quotfus{\calf_1\times\calf_2}=\quotfus{\calf_1}\times 
	\quotfus{\calf_2}. \]

\end{enuma}
\end{Lem}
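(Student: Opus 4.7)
For part (a), with $S = S_1 \times S_2 \in \sylp{G_1 \times G_2}$, every morphism of $\calf_S(G_1 \times G_2)$ is a conjugation $c_{(g_1,g_2)}$ restricted to some $P \le S$, which coincides with the restriction to $P$ of the product morphism $(c_{g_1}, c_{g_2}) \colon \pi_1(P) \times \pi_2(P) \to S_1 \times S_2$ (a morphism of $\calf_{S_1}(G_1) \times \calf_{S_2}(G_2)$). Conversely, each generator $(c_{g_1}, c_{g_2})$ of the product fusion system is realized by $c_{(g_1,g_2)}$ in $G_1 \times G_2$. The two inclusions give equality.

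For part (b), write $\calf = \calf_1 \times \calf_2$ and $S = S_1 \times S_2$. The plan is to apply Proposition~\ref{p:thetaF}(b): show that $O^{p'}(\calf_1) \times O^{p'}(\calf_2)$ is a saturated fusion subsystem of $\calf$ of index prime to $p$ whose $S$-automizer equals that of $O^{p'}(\calf)$, which forces the two to coincide. Saturation is \cite[Theorem~I.6.6]{AKO}. By definition of the product, every $\calf$-morphism has the form $(\alpha_1, \alpha_2)|_P$ with $\alpha_i \in \Hom_{\calf_i}(\pi_i(P), \pi_i(Q))$; in particular $\autf(S) = \Aut_{\calf_1}(S_1) \times \Aut_{\calf_2}(S_2)$. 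For the index: any $p'$-element $\varphi = (\alpha_1, \alpha_2)|_P \in \Aut_\calf(P)$ has each $\alpha_i \in \Aut_{\calf_i}(\pi_i(P))$ of order dividing that of $\varphi$ (since $P$ surjects onto $\pi_i(P)$), so each $\alpha_i$ is a $p'$-element and lies in $\Aut_{O^{p'}(\calf_i)}(\pi_i(P))$, giving $\varphi \in \Aut_{O^{p'}(\calf_1) \times O^{p'}(\calf_2)}(P)$.

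The $S$-automizer of $O^{p'}(\calf_1) \times O^{p'}(\calf_2)$ equals $\Aut^0_{\calf_1}(S_1) \times \Aut^0_{\calf_2}(S_2)$ by Proposition~\ref{p:thetaF}(b) applied to each factor, and this must be identified with $\autf^0(S)$. For the inclusion $\supseteq$, given $\alpha_i \in \Aut^0_{\calf_i}(S_i)$ witnessed by $P_i \in \calf_i^c$ with $\alpha_i|_{P_i} \in \Hom_{O^{p'}_*(\calf_i)}(P_i, S_i)$, use that $P_i \times S_{3-i} \in \calf^c$ (since $S_{3-i}$ is always $\calf_{3-i}$-centric), and that each $(\beta, \Id_{S_{3-i}})$ with $\beta \in O^{p'}(\Aut_{\calf_i}(Q))$ lies in $O^{p'}(\Aut_\calf(Q \times S_{3-i}))$; writing $\alpha_i|_{P_i}$ as a composite of restrictions of such $\beta$'s yields $(\alpha_i, \Id) \in \autf^0(S)$. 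For the reverse inclusion, the core step, introduce the coordinate projection functor $\pi_i \colon \calf \to \calf_i$, defined on morphisms by $(\alpha_1, \alpha_2)|_P \mapsto \alpha_i|_{\pi_i(P)}$ (well defined because $P$ surjects onto $\pi_i(P)$), and establish two properties: centricity is preserved, i.e.\ $P \in \calf^c \Rightarrow \pi_i(P) \in \calf_i^c$ (by applying centricity of $P$ to $\calf$-conjugates of the form $(\gamma, \Id)(P)$ for $\gamma \in \Hom_{\calf_i}(\pi_i(P), S_i)$), and $\pi_i$ carries $O^{p'}(\Aut_\calf(Q))$ into $O^{p'}(\Aut_{\calf_i}(\pi_i(Q)))$ (since $O^{p'}$ of any finite group is generated by its $p'$-elements, and homomorphic images of $p'$-elements remain $p'$-elements). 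Applying $\pi_i$ to any witness for $(\alpha_1, \alpha_2) \in \autf^0(S)$ then gives $\alpha_i \in \Aut^0_{\calf_i}(S_i)$.

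Combining these steps yields $\autf^0(S) = \Aut^0_{\calf_1}(S_1) \times \Aut^0_{\calf_2}(S_2)$, whence $O^{p'}(\calf_1 \times \calf_2) = O^{p'}(\calf_1) \times O^{p'}(\calf_2)$ by the injectivity in Proposition~\ref{p:thetaF}(b), and the quotient identity $\quotfus{\calf_1 \times \calf_2} = \quotfus{\calf_1} \times \quotfus{\calf_2}$ follows by passing to $\autf(S)/\autf^0(S)$. The main obstacle is the setup and verification of the two compatibility properties of the projection $\pi_i$ on $\calf$-morphisms; once these are in place, the result reduces to standard facts about $O^{p'}$ of finite groups and the universal framework of Section~\ref{s:Op'F}.
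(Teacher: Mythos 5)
Your overall architecture differs from the paper's: the paper leaves (a) as an exercise and quotes \cite[Proposition 3.4]{AOV1} for the equality $O^{p'}(\calf_1\times\calf_2)=O^{p'}(\calf_1)\times O^{p'}(\calf_2)$, deriving only the statement about $\quotfus{\calf_1\times\calf_2}$ from it via Proposition \ref{p:thetaF}(b); you instead try to prove the product formula from scratch with a coordinate-projection functor and the identification $\autf^0(S)=\Aut^0_{\calf_1}(S_1)\times\Aut^0_{\calf_2}(S_2)$. That plan is viable, and several of your steps (centricity of $\pi_i(P)$ via conjugates $(\gamma,\Id)(P)$, the inclusion $\Aut^0_{\calf_1}(S_1)\times\Aut^0_{\calf_2}(S_2)\subseteq\autf^0(S)$ using $P_i\times S_{3-i}\in\calf^c$, and the endgame via the bijection in Proposition \ref{p:thetaF}(b)) are fine.

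However, there is a genuine error, repeated at the two crux points: you systematically conflate $O^{p'}$ with $O^p$, i.e.\ $p$-elements with $p'$-elements. First, to show that $O^{p'}(\calf_1)\times O^{p'}(\calf_2)$ has index prime to $p$ you must show it contains $O^{p'}(\Aut_\calf(P))$, which is the subgroup generated by the \emph{$p$-elements} of $\Aut_\calf(P)$ (the $p'$-elements generate $O^p$); and your claim that a $p'$-element $\alpha_i\in\Aut_{\calf_i}(\pi_i(P))$ automatically lies in $\Aut_{O^{p'}(\calf_i)}(\pi_i(P))$ is false --- e.g.\ for $p=3$ and $\calf_i=\calf_{S_i}(\Sigma_3)$, the inversion automorphism is a $3'$-element of $\Aut_{\calf_i}(S_i)\cong C_2$ while $\Aut_{O^{3'}(\calf_i)}(S_i)=1$ (indeed, the failure of exactly this containment is what makes $\quotfus\calf$ nontrivial throughout the paper). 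Second, the assertion that ``$O^{p'}$ of any finite group is generated by its $p'$-elements'' is false (take a $p$-group), so your justification that $\pi_i$ carries $O^{p'}(\Aut_\calf(Q))$ into $O^{p'}(\Aut_{\calf_i}(\pi_i(Q)))$ does not stand as written. Both steps are salvageable by the same one-word repair: $O^{p'}(G)$ is generated by the $p$-elements of $G$, homomorphic images of $p$-elements are $p$-elements, and every $p$-element of $\Aut_{\calf_i}(Q)$ does lie in $\Aut_{O^{p'}(\calf_i)}(Q)$ because that automizer contains $O^{p'}(\Aut_{\calf_i}(Q))$. With ``$p$-element'' in place of ``$p'$-element'' your argument goes through and gives a self-contained alternative to the citation of \cite{AOV1}; as written, the two justifications are incorrect.
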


\begin{proof} We leave the proof of (a) as an exercise.

The first equality in (b) is shown in \cite[Proposition 3.4]{AOV1}. In 
particular, 
	\begin{align*} 
	\Aut_{\calf_1\times\calf_2}^0(S_1\times S_2) 
	&= \Aut_{O^{p'}(\calf_1\times\calf_2)}(S_1\times S_2) \\
	&= \Aut_{O^{p'}(\calf_1)}(S_1) \times \Aut_{O^{p'}(\calf_2)}(S_2)
	= \Aut_{\calf_1}^0(S_1)\times\Aut_{\calf_2}^0(S_2) 
	\end{align*}
where the first and third equalities follow from Proposition 
\ref{p:thetaF}(b), and so 
$\quotfus{\calf_1\times\calf_2}=\quotfus{\calf_1}\times 
\quotfus{\calf_2}$.
\end{proof}

The next lemma says that when checking whether a fusion subsystem of 
$\calf$ has index prime to $p$, it suffices to look at $\calf$-centric 
$\calf$-radical subgroups.

\begin{Lem} \label{l:Op'*(F)-cr}
The following hold for each saturated fusion system $\calf$ 
over a finite $p$-group $S$. 
\begin{enuma} 
\item We have $O^{p'}_*(\calf) = \Gen{O^{p'}(\autf(R)) \,\big|\, 
R\in\calf^{cr}}$. 
\item Assume $\calf_0\le\calf$ is a fusion subsystem over $S$ (not 
necessarily saturated) such that 
$\calf\le\gen{\calf_0,\Aut(\calf_0)}$. Then $\calf_0\ge 
O^{p'}_*(\calf)$ and thus has index prime to $p$ in $\calf$. 
\end{enuma}
\end{Lem}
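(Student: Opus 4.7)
My plan is to prove part (a) first and then use it to deduce part (b).

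For (a), the inclusion $\cale := \Gen{O^{p'}(\autf(R)) : R\in\calf^{cr}} \le O^{p'}_*(\calf)$ is immediate from Definition~\ref{d:p'-index}(a). For the reverse, it suffices to show $O^{p'}(\autf(P)) \subseteq \cale$ for every $P\le S$. My approach is to apply Alperin's fusion theorem (Theorem~\ref{AFT}) to express any $\alpha\in\autf(P)$ as a composite of restrictions of $\calf$-automorphisms of fully normalized $R\in\calf^{cr}$. The Sylow axiom gives $\Aut_S(R)\in\sylp{\autf(R)}$, hence the factorization $\autf(R)=O^{p'}(\autf(R))\cdot\Aut_S(R)$. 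Since elements of $\Aut_S(R)$ are restrictions of inner automorphisms of $S$---which are automatic in any fusion system over $S$, and so in $\cale$---any composite of restrictions of $\autf(R)$'s can be rewritten with factors lying either in $\bigcup_R O^{p'}(\autf(R))\subseteq\cale$ (over $R\in\calf^{cr}$) or in $\Inn(S)\subseteq\cale$. The main subtlety is handling $\autf(S)$ when $S\notin\calf^{cr}$; this requires an additional argument via the extension axiom to factor such automorphisms through proper $\calf$-centric $\calf$-radical subgroups.

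For (b), I will use (a) to reduce the problem to showing $O^{p'}(\autf(R))\le\Aut_{\calf_0}(R)$ for each $R\in\calf^{cr}$. Fix such $R$ and $\alpha\in\autf(R)$, and write $\Gamma=\Aut(\calf_0)$. By the hypothesis $\calf\le\gen{\calf_0,\Gamma}$, $\alpha$ is a composite of restrictions of morphisms from $\calf_0$ and elements of $\Gamma$. Since $\Gamma$ normalizes $\calf_0$, I can iteratively commute $\gamma\in\Gamma$ past $\psi\in\Mor(\calf_0)$ via $\gamma\circ\psi=(\9\gamma\psi)\circ\gamma$ (with $\9\gamma\psi\in\Mor(\calf_0)$), arriving at a decomposition $\alpha=\gamma|_{R'}\circ\psi_0$ with $\psi_0\in\Iso_{\calf_0}(R,R')$, $\gamma\in\Gamma$, and $\gamma(R')=R$. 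The assignment $\alpha\mapsto[\gamma\Aut_{\calf_0}(S)]$ defines a homomorphism $\autf(R)/\Aut_{\calf_0}(R)\to\Out(\calf_0)$.

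The main obstacle is to conclude that $O^{p'}(\autf(R))$ lies in the kernel of this homomorphism. My plan is to combine the Sylow axiom---giving $\Aut_S(R)\le\Aut_{\calf_0}(R)$ as a Sylow $p$-subgroup of $\autf(R)$---with a careful comparison of $\Gamma$-cosets modulo $\Aut_{\calf_0}(S)$ under restriction to $R$, to show that the image of the homomorphism factors through a $p$-subgroup of $\Out(\calf_0)$. The $p'$-order subgroup $O^{p'}(\autf(R))$ then maps to the trivial element, yielding $O^{p'}(\autf(R))\le\Aut_{\calf_0}(R)$ and completing the proof.
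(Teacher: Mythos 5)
Your proof of (a) breaks down at the claimed factorization $\autf(R)=O^{p'}(\autf(R))\cdot\Aut_S(R)$. For $R$ fully normalized the Sylow axiom gives $\Aut_S(R)\in\sylp{\autf(R)}$, and $O^{p'}(\autf(R))$ is the \emph{smallest normal subgroup of index prime to $p$}, hence contains every Sylow $p$-subgroup; so the right-hand side is just $O^{p'}(\autf(R))$, and the factorization fails whenever $\autf(R)$ has a nontrivial quotient of order prime to $p$. (The true statements are $G=O^p(G)T$ for $T\in\sylp{G}$, and the Frattini factorization $\autf(R)=O^{p'}(\autf(R))\cdot N_{\autf(R)}(\Aut_S(R))$, which is what the paper's argument uses.) Worse, if your rewriting procedure were valid it would show that every morphism of $\calf$ lies in $\Gen{O^{p'}(\autf(R))\mid R\in\calf^{cr}}$, i.e.\ that $O^{p'}_*(\calf)=\calf$ for \emph{every} saturated fusion system, which is false already for the $3$-fusion system of $\Sigma_3$ and would make $\quotfus\calf$ always trivial. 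Any correct proof of (a) must exploit that one starts with an element of $O^{p'}(\autf(P))$ rather than an arbitrary element of $\autf(P)$. (Also, the worry about $S\notin\calf^{cr}$ is vacuous: by the Sylow axiom $\outf(S)$ has order prime to $p$, so $S$ is always $\calf$-centric and $\calf$-radical.)

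The same misreading of $O^{p'}$ sinks your plan for (b): $O^{p'}(\autf(R))$ is not ``the $p'$-order subgroup'' --- it is generated by the $p$-elements and contains $\Aut_S(R)$. So even granting a well-defined homomorphism $\autf(R)\to\Out(\calf_0)$ with image a $p$-group, its kernel is only forced to contain $O^p(\autf(R))$, not $O^{p'}(\autf(R))$; and well-definedness is itself unclear, since two decompositions $\gamma|_{R'}\circ\psi_0$ of the same $\alpha$ differ by some $\delta\in\Aut(\calf_0)$ whose restriction to a subgroup lies in $\calf_0$, which does not force $\delta\in\Aut_{\calf_0}(S)$. The paper proceeds in the opposite order and more simply: for (b) one checks on the generators of $\calf=\gen{\calf_0,\Aut(\calf_0)}$ that conjugation by any $\varphi\in\homf(P,Q)$ carries $\Aut_{\calf_0}(P)$ onto $\Aut_{\calf_0}(\varphi(P))$, so $\Aut_{\calf_0}(P)\nsg\autf(P)$; for $P$ fully normalized, $\Aut_{\calf_0}(P)\ge\Aut_S(P)\in\sylp{\autf(P)}$, and a normal subgroup containing a Sylow $p$-subgroup contains its normal closure, which is exactly $O^{p'}(\autf(P))$; conjugation invariance then yields the inclusion for all $P$, proving (b) directly. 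Part (a) is then deduced from (b), applied to $\calf_0=\Gen{O^{p'}(\autf(R))\mid R\in\calf^{cr}}$, after a downward induction (using the genuine Frattini factorization, the extension axiom, and Alperin's fusion theorem) showing $\calf=\gen{\calf_0,\autf(S)}$. As written, neither half of your argument can be repaired without adopting essentially these ideas.
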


\begin{proof} Assume $\calf_0\le\calf$ is such that 
$\calf\le\gen{\calf_0,\Aut(\calf_0)}$. We claim that for each $P,Q\le 
S$ and each $\varphi\in\homf(P,Q)$, conjugation by $\varphi$ sends 
$\Aut_{\calf_0}(P)$ onto $\Aut_{\calf_0}(\varphi(P))$. This is clear when 
$\varphi\in\Hom_{\calf_0}(P,Q)$, and holds by definition when $\varphi$ is 
the restriction of an element of $\Aut(\calf_0)\le\Aut(S)$. So it holds for 
all $\varphi\in\homf(P,Q)$, since $\varphi$ is a composite of such 
morphisms. 


In particular, $\Aut_{\calf_0}(P)\nsg\autf(P)$. If $P$ is fully normalized in 
$\calf$, then $\Aut_{\calf_0}(P)\ge\Aut_S(P)\in\sylp{\autf(P)}$ since 
$\Aut_{\calf_0}(S)\ge\Inn(S)$, and so 
$\Aut_{\calf_0}(P)$ contains the normal closure $O^{p'}(\autf(P))$ of 
$\Aut_S(P)$ in $\autf(P)$. We just saw that $\Aut_{\calf_0}(Q)$ is isomorphic to 
$\Aut_{\calf_0}(P)$ whenever $Q\in P^\calf$, and hence $\Aut_{\calf_0}(P)\ge 
O^{p'}(\autf(P))$ for all $P\le S$. Thus $\calf_0\ge 
O^{p'}_*(\calf)$, and has index prime to $p$ in $\calf$.

We now apply this to 
$\calf_0\defeq\gen{O^{p'}(\autf(R))\,|\,R\in\calf^{cr}} \le 
O^{p'}_*(\calf)$. Set 
$\cale=\gen{\calf_0,\autf(S)}$: we will show that $\cale=\calf$, and hence 
by the above that $\calf_0\ge O^{p'}_*(\calf)$.  
We first check that $\autf(P)=\Aut_\cale(P)$ for each 
$P\in\calf^{cr}$ fully normalized in $\calf$. Assume otherwise, let $P$ 
a counterexample with $|P|$ maximal, and note that $P<S$. 
By the Frattini argument and since $\Aut_S(P)\in\sylp{\autf(P)}$ by the Sylow 
axiom, we have 
$\autf(P)=O^{p'}(\autf(P))\cdot N_{\autf(P)}(\Aut_S(P))$. Each $\alpha\in 
N_{\autf(P)}(\Aut_S(P))$ extends to some $\4\alpha\in\autf(N_S(P))$ by the 
extension axiom, this is by Theorem \ref{AFT} (Alperin's fusion theorem) 
a composite of restrictions of elements in $\autf(Q)$ for $|Q|\ge|N_S(P)|>|P|$, and 
hence $\alpha\in\Mor(\cale)$ by the maximality assumption. Also, 
$O^{p'}(\autf(P))\subseteq\Mor(\cale)$ by definition of $\cale$, so 
$\autf(P)\subseteq\Mor(\cale)$, contradicting the original assumption. Thus 
	\[ \cale \ge \gen{\autf(P)\,|\,P\in\calf^{cr}~
	\textup{fully normalized in $\calf$}} = \calf, \] 
the last equality by Theorem \ref{AFT} again, and hence 
$\calf=\cale=\gen{\calf_0,\autf(S)}$. So 
$\calf_0\ge O^{p'}_*(\calf)$, and the opposite inclusion is clear.
\end{proof}

Note that by using the stronger form of Alperin's fusion theorem stated in 
\cite[Theorem I.3.5]{AKO}, we could replace $\calf^{cr}$ by the set 
consisting of essential subgroups and $S$.

The following proposition says that restrictions of $\theta_\calf$ to 
certain full subcategories are also universal among \emph{strongly} 
multiplicative maps. 

\begin{Prop} \label{p:pi1(Fcr)}
Let $\calf$ be a saturated fusion system over a finite $p$-group $S$, let 
$\scrp\subseteq\calf^c$ be a family of subgroups closed under 
$\calf$-conjugacy and containing $\calf^{cr}$, and let 
$\calf^\scrp\subseteq\calf^c$ be the full subcategory with object set 
$\scrp$. Then 
	\[ \theta_\calf^{\scrp} = \theta_\calf|_{\Mor(\calf^{\scrp})} 
	\: \Mor(\calf^{\scrp}) \Right5{} \quotfus\calf \]
is universal among all strongly multiplicative maps 
$\Mor(\calf^{\scrp})\too\Gamma$. 
\end{Prop}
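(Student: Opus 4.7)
The plan is to reduce the universal property of $\theta_\calf^\scrp$ to that of $\theta_\calf$ from Proposition \ref{p:thetaF}(a) by extending any given strongly multiplicative map $\theta\colon\Mor(\calf^\scrp)\to\Gamma$ to a multiplicative map $\widetilde\theta\colon\Mor(\calf^c)\to\Gamma$. Such an extension is automatically strongly multiplicative: by the remark following Proposition \ref{p:thetaF} (namely \cite[Theorem III.4.19]{AKO}), $\theta_\calf$ is universal among \emph{all} multiplicative maps on $\Mor(\calf^c)$, so any multiplicative map on $\Mor(\calf^c)$ factors through $\theta_\calf$ and hence inherits strong multiplicativity. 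Once $\widetilde\theta$ has been constructed, Proposition \ref{p:thetaF}(a) produces a unique homomorphism $f\colon\quotfus\calf\to\Gamma$ with $\widetilde\theta=f\circ\theta_\calf$; restricting to $\Mor(\calf^\scrp)$ gives $\theta=f\circ\theta_\calf^\scrp$. The uniqueness of $f$ with respect to $\theta$ itself follows from surjectivity of $\theta_\calf^\scrp$ onto $\quotfus\calf$, a direct consequence of Alperin's fusion theorem (Theorem \ref{AFT}) together with the hypothesis $\calf^{cr}\subseteq\scrp$.

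The extension is built via Alperin's fusion theorem: every $\varphi\in\homf(P,Q)$ with $P,Q\in\calf^c$ admits a decomposition $\varphi=\alpha_n|_{P_{n-1}}\circ\cdots\circ\alpha_1|_{P_0}$, where each $\alpha_i\in\autf(R_i)$ for some fully normalized $R_i\in\calf^{cr}\subseteq\scrp$. I then define $\widetilde\theta(\varphi):=\theta(\alpha_n)\cdots\theta(\alpha_1)$; each factor is defined because $\alpha_i\in\Mor(\calf^\scrp)$, and the resulting formula is multiplicative by construction provided it is well-defined.

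The main obstacle is therefore well-definedness: the product must not depend on the Alperin decomposition of $\varphi$. I expect to argue that any two such decompositions are connected by a finite sequence of elementary moves taking place entirely within $\scrp$, of three types: (i) replacing $(\alpha\beta)|_P$ by $\alpha|_{\beta(P)}\circ\beta|_P$ when $\alpha,\beta\in\autf(R)$ for a common $R\in\calf^{cr}$; (ii) inserting or deleting inclusion maps, which $\theta$ sends to~$1$ by definition of a multiplicative map; and (iii) transporting between two $\calf$-conjugate fully normalized representatives $R,R'\in\calf^{cr}$ via an $\calf$-isomorphism $R\xto{\cong}R'$, which belongs to $\Mor(\calf^\scrp)$ because $\scrp$ is closed under $\calf$-conjugacy. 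Each such move preserves the product $\theta(\alpha_n)\cdots\theta(\alpha_1)$ because $\theta$ is multiplicative on $\Mor(\calf^\scrp)$, and crucially none of them invokes morphisms between subgroups lying in $\calf^c\setminus\scrp$. Geometrically this is precisely the statement that the inclusion of nerves $|\calf^\scrp|\hookrightarrow|\calf^c|$ induces an isomorphism on fundamental groups, the non-radical subgroups added in passing from $\scrp$ to $\calf^c$ contributing nothing essential to $\pi_1$. This well-definedness is the only substantive technical point; the rest of the argument is formal.
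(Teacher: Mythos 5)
There is a genuine gap, and it sits exactly where you locate the ``only substantive technical point.'' Your extension $\widetilde\theta$ is defined by choosing an Alperin decomposition, and your proposed proof of well-definedness --- that any two decompositions of the same morphism are connected by elementary moves taking place entirely within $\scrp$, equivalently that $|\calf^\scrp|\hookrightarrow|\calf^c|$ induces an isomorphism on $\pi_1$ --- is false in general. If it were true, then every merely \emph{multiplicative} map on $\Mor(\calf^\scrp)$ would extend to a multiplicative map on $\Mor(\calf^c)$ and hence factor through $\quotfus\calf\cong\pi_1(|\calf^c|)$; but the paper's Example \ref{ex:constr} shows that for a constrained system with $Q=O_p(\calf)$ and $\scrp=\calf^{cr}$ the universal group for multiplicative maps on $\Mor(\calf^{cr})$ is $\autf(Q)$, which properly surjects onto $\quotfus\calf\cong\autf(Q)/O^{p'}(\autf(Q))$, and Example \ref{ex:infinite} ($\SL_2(9)$, $p=2$) shows this universal group can even be infinite. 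So the relations in $\pi_1(|\calf^c|)$ that pass through centric non-radical subgroups are \emph{not} consequences of relations inside $\scrp$ alone; any correct argument must invoke strong multiplicativity --- the vanishing of $\theta$ on $O^{p'}(\autf(P))$ --- precisely when eliminating those subgroups, and your elementary moves (i)--(iii) never do. As written, the well-definedness claim is not merely unproven (``I expect to argue''); the route proposed for it cannot work.

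For comparison, the paper avoids constructing an extension of $\theta$ altogether: it forms the universal strongly multiplicative target $\Gamma$ of $\Mor(\calf^\scrp)$ by generators and relations, lets $\cale=\gen{\theta^{-1}(1)}$, shows $\cale\ge O^{p'}_*(\calf)$ (Lemma \ref{l:Op'*(F)-cr}), and then proves $\cale$ is saturated by verifying it is $\scrp$-generated and $\scrp$-saturated and applying \cite[Theorem 2.2]{BCGLO1}; the hypothesis ($*$) of that theorem, checked using $O_p(\autf(P))>\Inn(P)$ for $P\in\calf^c\sminus\scrp$, is exactly where the non-radical centric subgroups are disposed of. Saturation then gives $\cale\ge O^{p'}(\calf)$ by Proposition \ref{p:thetaF}(b), which forces the comparison map $\Gamma\to\quotfus\calf$ to be injective. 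If you want to salvage your approach, you would need a substitute for that saturation step --- some argument showing directly that loops through objects of $\calf^c\sminus\scrp$ die after killing the subgroups $O^{p'}(\autf(P))$ --- which is essentially the content of the proposition itself.
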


\begin{proof} Let $\Gamma$ be the free group with generators 
$[\varphi]$ for all $\varphi\in\Mor(\calf^{\scrp})$, modulo relations 
\begin{itemize} 
\item $[\varphi][\psi]=[\varphi\circ\psi]$ whenever $\varphi\circ\psi$ is 
defined;
\item $[\incl_P^Q]=1$ for all $P\le Q$ in $\scrp$; and 
\item $[\alpha]=1$ for all $\alpha\in O^{p'}(\autf(P))$ with $P\in\scrp$.
\end{itemize}
Let $\theta\:\Mor(\calf^{\scrp})\too\Gamma$ be the natural map sending 
$\varphi$ to $[\varphi]$. Since 
$\theta_\calf^\scrp\:\Mor(\calf^\scrp)\too\quotfus\calf$ is strongly 
multiplicative, there is $f\:\Gamma\too\quotfus\calf$ such that 
$f([\varphi])=\theta_\calf^\scrp(\varphi)$ for each 
$\varphi\in\Mor(\calf^\scrp)$, and hence such that 
$f\circ\theta=\theta_\calf^{\scrp}$. In particular, $f$ is onto since 
$\theta_\calf^\scrp$ is onto.

Set $\cale=\gen{\theta^{-1}(1)}$: the smallest fusion system over $S$ 
containing all morphisms in $\theta^{-1}(1)$. Thus $\cale\ge 
O^{p'}_*(\calf)$ by Lemma \ref{l:Op'*(F)-cr} and since 
$\scrp\supseteq\calf^{cr}$, and so $\cale$ has index prime to $p$ in 
$\calf$. By Lemma \ref{l:Ec=Fc}(b), $\cale$ and $\calf$ have the same fully 
normalized and fully centralized subgroups. By Lemma \ref{l:Ec=Fc}(c), the 
Frattini condition holds: for each $P\le S$ and each 
$\varphi\in\homf(P,S)$, there are $\alpha\in\autf(S)$ and 
$\varphi_0\in\Hom_\cale(P,S)$ such that $\varphi=\alpha\circ\varphi_0$.

By the Frattini condition, each $\varphi\in\Mor(\calf^\scrp)$ is a 
composite of restrictions of morphisms in $\theta^{-1}(1)$ and elements of 
$\autf(S)$. Hence $\theta(\autf(S))=\Gamma$. Since $f$ is onto, to show 
that it is an isomorphism and hence that $\theta_\calf^\scrp$ is universal, 
it remains to show that $\Ker(\theta|_{\autf(S)})\ge\autf^0(S)$. We do 
this by first proving that $\cale$ is saturated.

By construction, $\cale$ is $\scrp$-generated in the sense of 
\cite[Definition 2.1]{BCGLO1}. We claim that $\cale$ is also 
$\scrp$-saturated: that the Sylow and extension axioms hold for members of 
$\scrp$. If $P\in\scrp$ is fully normalized in $\cale$, hence in $\calf$ 
by Lemma \ref{l:Ec=Fc}(b), then $\Aut_S(P)\in\sylp{\Aut_\cale(P)}$ 
since $\Aut_S(P)\in\sylp{\autf(P)}$. Thus the Sylow axiom holds in $\cale$ 
for members of $\scrp$. 

Fix $\varphi\in\Hom_\cale(P,Q)$, where $P,Q\in\scrp$ and $\varphi(P)$ is 
fully centralized in $\cale$ (hence in $\calf$), and let 
$N_\varphi\le N_S(P)$ be as in Definition \ref{d:sfs}(b). (Note that 
$N_\varphi$ is the same, whether we are working in $\cale$ or in $\calf$.) 
By the extension axiom for $\calf$, $\varphi$ extends to some 
$\4\varphi\in\homf(N_\varphi,S)$. By the Frattini condition, 
$\4\varphi=\alpha\circ\varphi_0$, where $\alpha\in\autf(S)$ and 
$\varphi_0\in\Hom_\cale(P,S)$. Thus 
	\[ 1 = \theta(\varphi) = \theta(\alpha\circ\varphi_0|_P) = 
	\theta(\alpha)\cdot\theta(\varphi_0|_P) = \theta(\alpha) \]
where $\theta(\varphi)=\varphi(\varphi_0|_P)=1$ since 
$\varphi$ and $\varphi_0|_P$ are both in $\cale=\gen{\theta^{-1}(1)}$. 
Hence $\alpha\in\Aut_\cale(S)$, and so $\4\varphi\in\Hom_\cale(N_\varphi,S)$. 
The extension axiom for $\cale$ thus holds for members of $\scrp$, and so 
$\cale$ is $\scrp$-saturated. 

If $P\in\calf^c\sminus\scrp$ is fully normalized in $\calf$, then 
	\[ \Aut_S(P)\cap O_p(\Aut_\cale(P)) = \Aut_S(P)\cap O_p(\autf(P)) = 
	O_p(\autf(P)) > \Inn(P) \] 
since $P$ is not $\calf$-radical. Since each $\cale$-conjugacy class in 
$\calf^c\sminus\scrp$ contains such subgroups, $\cale$ satisfies condition 
($*$) in \cite[Theorem 2.2]{BCGLO1}. Since $\cale$ is $\scrp$-generated and 
$\scrp$-saturated, $\cale$ is saturated by that theorem. 
So $\cale\ge O^{p'}(\calf)$ by Proposition \ref{p:thetaF}(b), and hence 
	\beq \Ker(\theta|_{\autf(S)})=\Aut_\cale(S) \ge 
	\Aut_{O^{p'}(\calf)}(S) =  \autf^0(S). \qedhere 
	\eeq
\end{proof}

Proposition \ref{p:pi1(Fcr)} is similar to a theorem of Grodal 
\cite[Theorem 7.5.1]{Grodal}, though that theorem is stated only 
when $\calf$ is the fusion system of a group (and for a slightly different 
definition of radical subgroups). 

The following very elementary family of examples shows that 
$\theta_\calf|_{\calf^{cr}}$ need \emph{not}, in general, be universal 
among all multiplicative maps on $\Mor(\calf^{cr})$: only among those that 
are strongly multiplicative. Recall \cite[Definition I.4.8]{AKO} 
that a saturated fusion system $\calf$ over a finite $p$-group $S$ is 
\emph{constrained} if $O_p(\calf)$ is $\calf$-centric.

\begin{Ex} \label{ex:constr}
Let $\calf$ be a constrained saturated fusion system over $S$, and set 
$Q=O_p(\calf)$. Then $Q\in\calf^{cr}$ and is contained in all other members 
of $\calf^{cr}$. The universal group for multiplicative maps defined on 
$\Mor(\calf^{cr})$ is isomorphic to $\autf(Q)$, while that for strongly 
multiplicative maps is isomorphic to 
$\quotfus\calf\cong\autf(Q)/O^{p'}(\autf(Q))$. 
\end{Ex}

In fact, the universal group for multiplicative maps on $\Mor(\calf^{cr})$ 
need not even be finite.

\begin{Ex} \label{ex:infinite} Set $G=\SL_2(9)$ and $p=2$, and choose 
$S\in\syl2{G}$. Thus $S\cong Q_{16}$ and contains two subgroups $Q_1,Q_2\le 
S$ that are quaternion of order $8$. Set $\calf=\calf_S(G)$; then 
$\calf^{cr}=\{S,Q_1,Q_2\}$, and hence the category $\calf^{cr}$ is the 
union of the two categories $N_\calf(Q_1)^{cr}$ and $N_\calf(Q_2)^{cr}$, 
with intersection $N_\calf(S)^{cr}$. Since these three normalizers are all 
constrained, we see, using Example \ref{ex:constr}, that the universal 
group for multiplicative maps on $\Mor(\calf^{cr})$ is isomorphic to 
	\[ \autf(Q_1)\*_{\autf(S)}\autf(Q_2) \cong 
	\Sigma_4\*_{\!D_8}\Sigma_4. \]
\end{Ex}

We now list some more tools that will be useful when working with fusion 
subsystems of index prime to $p$.

When $\cale$ is a saturated fusion subsystem of $\calf$, $O^{p'}(\cale)$ is 
not, in general, a fusion subsystem of $O^{p'}(\calf)$. The next two lemmas 
describe some cases when this is, in fact, true. 

\begin{Lem} \label{l:Op'F=F:1} \fr2
Let $\calf$ be a saturated fusion system over a finite $p$-group $S$, and 
let $\cale\le\calf$ be a saturated fusion subsystem over $T\le S$. 
\begin{enuma} 

\item If $\cale^{cr}\subseteq\calf^c$ as sets, then $O^{p'}(\cale)\le 
O^{p'}(\calf)$. 

\item If $\cale=C_\calf(U)$ for some abelian subgroup $1\ne U\le S$ fully 
centralized in $\calf$, then $\cale^c\subseteq\calf^c$ as sets, and 
hence $O^{p'}(\cale)\le O^{p'}(\calf)$.

\end{enuma}
\end{Lem}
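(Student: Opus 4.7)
For part (a), the plan is to compare $\theta_\cale$ and $\theta_\calf$ via the universal property in Proposition \ref{p:pi1(Fcr)}. The hypothesis $\cale^{cr}\subseteq\calf^c$ lets us restrict $\theta_\calf$ to $\Mor(\cale^{cr})$, giving a multiplicative map that I would check is strongly multiplicative by noting that for $R\in\cale^{cr}$, the inclusion $\Aut_\cale(R)\le\autf(R)$ forces $O^{p'}(\Aut_\cale(R))\le O^{p'}(\autf(R))$ (a Sylow $p$-subgroup of the smaller group sits inside one of the larger), and $\theta_\calf$ kills the latter. Proposition \ref{p:pi1(Fcr)} applied to $\cale$ with $\scrp=\cale^{cr}$ would then yield a homomorphism $f\:\quotfus\cale\to\quotfus\calf$ with $f\circ\theta_\cale=\theta_\calf$ on $\Mor(\cale^{cr})$. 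For each $R\in\cale^{cr}$ and $\varphi\in\Aut_{O^{p'}(\cale)}(R)$, Proposition \ref{p:thetaF}(b) gives $\theta_\cale(\varphi)=1$, hence $\theta_\calf(\varphi)=f(1)=1$, so $\varphi\in\Aut_{O^{p'}(\calf)}(R)$. Using Lemma \ref{l:Ec=Fc}(a) to identify $(O^{p'}(\cale))^{cr}=\cale^{cr}$ and Alperin's fusion theorem (Theorem \ref{AFT}), every morphism of $O^{p'}(\cale)$ decomposes into such automizers and is therefore a morphism of $O^{p'}(\calf)$.

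For part (b), the plan is to first establish $\cale^c\subseteq\calf^c$ as sets, after which (a) immediately supplies $O^{p'}(\cale)\le O^{p'}(\calf)$. Fix $P\in\cale^c$ and write $T=C_S(U)$. Because $U$ is abelian with $P\le T$, we have $U\le Z(T)\le C_T(P)\le P$ by $\cale$-centricity, so $U\le Z(P)$. To prove $P$ is $\calf$-centric, I would take any $Q\in P^\calf$ with an $\calf$-isomorphism $\varphi\:P\to Q$, set $V=\varphi(U)\le Z(Q)$, and choose an $\calf$-isomorphism $\psi\:V\to U$. Using the fully centralized hypothesis on $U$, the extension axiom extends $\psi$ to a morphism $\til\psi$ whose domain contains $QC_S(Q)$ (since both $Q$ and $C_S(Q)$ centralize $V$). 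Setting $Q^*=\til\psi(Q)$, transporting commutation along $\til\psi$ gives $Q^*\le T$, $U\le Q^*$, and $\til\psi(C_S(Q))\le C_T(Q^*)$.

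It remains to show $C_T(Q^*)\le Q^*$, which will follow if $Q^*$ is $\cale$-centric. The obstacle here is that $(\til\psi\circ\varphi)|_U$ is some $\alpha\in\autf(U)$ that need not be $\Id_U$. To fix this, I would apply the extension axiom to $\alpha$ (again using that $U$ is fully centralized) to obtain $\til\alpha\in\autf(T)$; then $\til\alpha^{-1}\circ\til\psi\circ\varphi\:P\to\til\alpha^{-1}(Q^*)$ is an $\calf$-isomorphism restricting to $\Id_U$, hence an $\cale$-isomorphism, so that $\til\alpha^{-1}(Q^*)\in P^\cale$ is $\cale$-centric. Applying $\til\alpha$ to $C_T(\til\alpha^{-1}(Q^*))\le\til\alpha^{-1}(Q^*)$ yields $C_T(Q^*)\le Q^*$, and combining with the chain $\til\psi(C_S(Q))\le C_T(Q^*)\le Q^*=\til\psi(Q)$ gives $C_S(Q)\le Q$ by injectivity of $\til\psi$. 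The main hurdle is the bookkeeping required to produce $\til\psi$ and $\til\alpha$ with the correct domains and ranges inside $T$, and to verify that the $\til\alpha$-correction really converts the $\calf$-isomorphism into an $\cale$-isomorphism.
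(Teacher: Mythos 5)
Your proposal is correct and follows essentially the same route as the paper: part (a) is the paper's argument verbatim (restrict $\theta_\calf$ to $\Mor(\cale^{cr})$, invoke the universality from Proposition \ref{p:pi1(Fcr)}, then finish with Lemma \ref{l:Ec=Fc}(a) and Alperin's fusion theorem). In part (b) you also use the key idea of the paper's proof — extend an isomorphism onto $U$ via the extension axiom (using that $U$ is fully centralized) to pull a conjugate of $P$ into $T=C_S(U)$ and recognize the result as $\cale$-conjugate to $P$ — the only cosmetic difference being that the paper chooses $\psi=(\varphi|_U)^{-1}$ and a fully centralized representative $P^*$, which makes your correction automorphism $\til\alpha$ unnecessary, while your version handles an arbitrary conjugate $Q$ directly; both are valid.
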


\begin{proof} \textbf{(a) } Since $\cale^{cr}\subseteq\calf^c$ as 
categories, $\theta_\calf$ restricts to a strongly multiplicative map 
$\Mor(\cale^{cr})\too\quotfus\calf$. By the universal property of 
$\theta_\cale|_{\Mor(\cale^{cr})}$ (Proposition \ref{p:pi1(Fcr)}), there is 
a (unique) homomorphism $\omega\:\quotfus\cale\too\quotfus\calf$ such that 
$\omega\circ\theta_\cale|_{\Mor(\cale^{cr})}
=\theta_\calf|_{\Mor(\cale^{cr})}$. Hence 
	\[ \Mor(O^{p'}(\cale)^{cr}) = (\theta_\cale|_{\Mor(\cale^{cr})})^{-1}(1)
	\le\theta_\calf^{-1}(1) =\Mor(O^{p'}(\calf)^c) \] 
where the first equality holds by Lemma \ref{l:Ec=Fc}(a) 
($O^{p'}(\cale)^{cr}$ and $\cale^{cr}$ have the same objects). 
Since each morphism in $O^{p'}(\cale)$ is a composite of restrictions of 
morphisms in $O^{p'}(\cale)^{cr}$, 
we conclude that $O^{p'}(\cale)\le O^{p'}(\calf)$. 

\smallskip

\noindent\textbf{(b) } Assume $\cale=C_\calf(U)$ and hence $T=C_S(U)$, 
where $U\ne1$ is abelian and fully centralized in $\calf$. Fix 
$P\in\cale^c$; we must show that $P\in\calf^c$. Note that $P\ge C_T(P)\ge 
U$. 

Choose $P^*\in P^\calf$ fully centralized in $\calf$ and 
$\varphi\in\isof(P,P^*)$, and set $U^*=\varphi(U)$. Since $U$ is fully 
centralized in $\calf$, there is $\psi\in\homf(C_S(U^*),T)$ (recall 
$T=C_S(U)$) such that $\psi|_{U^*}=\varphi^{-1}|_{U^*}\in\isof(U^*,U)$. 
Also, $P^*\le C_S(U^*)$ since $P\le C_S(U)$. Set $R=\psi(P^*)$; then 
$\psi\varphi\in\Iso_\cale(P,R)$ since its restriction to $U$ is the 
identity. So $R\in P^\cale\subseteq\cale^c$, hence $C_S(R)\le C_S(U)=T$, 
and thus $C_S(R)=C_T(R)\le R$. But then $C_S(P^*)\le P^*$ since 
$\psi(C_S(P^*))\le C_S(R)$, and $P,P^*\in\calf^c$ since $P^*\in 
P^\calf$ is fully centralized in $\calf$. 
\end{proof}

We will also need to deal with ``subsystems of $p$-power index'' 
in a few cases.

\begin{Defi}[{\cite[Definitions 2.1 \& 3.1]{BCGLO2}}] \label{d:hyp(F)}
Let $\calf$ be a saturated fusion system over a finite $p$-group $S$.
\begin{enuma} 

\item The \emph{hyperfocal subgroup} of $\calf$ is defined as follows:
	\[ \hyp(\calf) = \Gen{ g^{-1}\alpha(g) \,\big|\, g\in P\le S, ~ 
	\alpha\in O^p(\autf(P)) }. \]

\item A fusion subsystem $\cale\le\calf$ over $T\le S$ has \emph{$p$-power 
index} in $\calf$ if $T\ge\hyp(\calf)$, and for all $P\le T$, 
$\Aut_\cale(P)\ge O^p(\autf(P))$. 


\end{enuma}
\end{Defi}

By \cite[Theorem 4.3]{BCGLO2} or \cite[Theorem I.7.4]{AKO}, for each 
saturated fusion system $\calf$ over $S$ and each $T\le S$ containing 
$\hyp(\calf)$, there is a unique saturated fusion subsystem $\calf_T$ over 
$T$ of $p$-power index in $\calf$, and 
$\calf_T=\gen{\Inn(S),O^p(\autf(P))\,|\,P\le T}$ by the second 
reference.

\begin{Lem} \label{l:Op'Op<Op'} \fr2
Let $\cale\le\calf$ be saturated fusion systems over finite $p$-groups 
$T\le S$, where $\cale$ has $p$-power index in $\calf$ and $T$ is 
normal in $S$. Then 
$O^{p'}(\cale)\le O^{p'}(\calf)$, and $\quotfus\calf$ is isomorphic to a 
subquotient of $\quotfus\cale$. 
\end{Lem}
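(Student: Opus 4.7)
The plan is to establish $O^{p'}(\cale)\le O^{p'}(\calf)$ directly from generators, and then derive the subquotient statement via a restriction argument exploiting the $p$-power quotient $\autf(T)/\Aut_\cale(T)$ and the $p'$-order of $\quotfus\calf$.

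For the inclusion $O^{p'}(\cale)\le O^{p'}(\calf)$: by Lemma~\ref{l:Op'*(F)-cr}(a), $O^{p'}(\cale)$ is generated (together with $\Inn(T)$) by the subgroups $O^{p'}(\Aut_\cale(R))$ for $R\in\cale^{cr}$. Each of these is generated by $p$-elements of $\Aut_\cale(R)\le\autf(R)$, so it lies in $O^{p'}(\autf(R))\subseteq O^{p'}_*(\calf)\subseteq O^{p'}(\calf)$; the same argument gives $\Inn(T)\subseteq O^{p'}(\autf(T))\subseteq O^{p'}(\calf)$. All generators of $O^{p'}(\cale)$ thus lie in $O^{p'}(\calf)$.

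For the subquotient claim, $\cale$ having $p$-power index makes $\cale$ normal in $\calf$ and $T$ strongly $\calf$-closed, so restriction $r\colon\autf(S)\to\autf(T)$, $\alpha\mapsto\alpha|_T$, is well-defined (using $T\nsg S$). By normality $\Aut_\cale(T)\nsg\autf(T)$, and the inclusion $\Aut_\cale(T)\supseteq O^p(\autf(T))$ forces $\autf(T)/\Aut_\cale(T)$ to be a $p$-group. Set $K=r^{-1}(\Aut_\cale(T))$: then $\autf(S)/K$ is a $p$-group, and since $\quotfus\calf=\autf(S)/\autf^0(S)$ has $p'$-order, one gets $K\cdot\autf^0(S)=\autf(S)$ and thus $K/(K\cap\autf^0(S))\cong\quotfus\calf$. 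Next I define $\psi\colon K\to\quotfus\cale$ by $\psi(\alpha)=\theta_\cale(\alpha|_T)$, a group homomorphism. Granted the claim $\ker(\psi)\le\autf^0(S)$, the projection $K\twoheadrightarrow\quotfus\calf$ factors through $K/\ker(\psi)\cong\psi(K)\le\quotfus\cale$, exhibiting $\quotfus\calf$ as a subquotient of $\quotfus\cale$.

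The main obstacle is the implication: if $\alpha\in\autf(S)$ satisfies $\alpha|_T\in\Aut_{O^{p'}(\cale)}(T)$, then $\alpha\in\autf^0(S)$. By the first step this hypothesis upgrades to $\alpha|_T\in\Aut_{O^{p'}(\calf)}(T)$. When $T\in\calf^c$, the commutativity $\alpha\circ\incl_T^S=\incl_T^S\circ\alpha|_T$ combined with the strong multiplicativity of $\theta_\calf$ on $\Mor(\calf^c)$ gives $\theta_\calf(\alpha)=\theta_\calf(\alpha|_T)=1$, so $\alpha\in\autf^0(S)$. For general $T$, the idea is to pass to $T^*=T\cdot C_S(T)$, which is $\alpha$-invariant (since $\alpha(T)=T$ and $\alpha(C_S(T))=C_S(T)$) and $\calf$-centric (because $C_S(T^*)=Z(C_S(T))\le C_S(T)\le T^*$), and then to apply the same composition argument to $\alpha|_{T^*}$, using the saturation of $O^{p'}(\calf)$ to control the extra data coming from $C_S(T)$.
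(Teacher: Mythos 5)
Your overall skeleton for the subquotient claim (restrict to $T$, use that $\autf(T)/\Aut_\cale(T)$ is a $p$-group while $\quotfus\calf$ has order prime to $p$, and factor through $\quotfus\cale$) is sound and runs parallel to the maps $f_1,f_2,f_3$ in the paper's proof; but both of the substantive steps have genuine gaps. First, your proof of $O^{p'}(\cale)\le O^{p'}(\calf)$ misquotes Lemma \ref{l:Op'*(F)-cr}(a): that lemma describes $O^{p'}_*(\cale)$, the (in general non-saturated) subsystem generated by the groups $O^{p'}(\Aut_\cale(R))$, not the saturated subsystem $O^{p'}(\cale)=\gen{\theta_\cale^{-1}(1)}$. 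One has $\Aut_{O^{p'}(\cale)}(T)=\Aut^0_\cale(T)$, which in general contains automorphisms of $T$ that are not composites of restrictions of your listed generators (only their restrictions to some $\cale$-centric subgroup are required to lie in $O^{p'}_*(\cale)$). So your argument proves only $O^{p'}_*(\cale)\le O^{p'}(\calf)$, which is strictly weaker, and the missing part is where most of the work lies: the paper constructs the subsystem $\cale_0\le O^{p'}(\calf)$ of $p$-power index over $T$, proves $T$ is strongly closed in $\calf$, shows $\autf(T)\le\Aut(\cale_0)$ so that $\cale\le\gen{\cale_0,\Aut(\cale_0)}$, and only then can apply Lemma \ref{l:Op'*(F)-cr}(b) together with Proposition \ref{p:thetaF}(b) (minimality of $O^{p'}(\cale)$ among saturated subsystems of index prime to $p$, which needs $\cale_0$ to be saturated) to conclude $O^{p'}(\cale)\le\cale_0\le O^{p'}(\calf)$.

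Second, the implication you correctly identify as the main obstacle --- if $\alpha\in\autf(S)$ and $\alpha|_T\in\Aut_{O^{p'}(\cale)}(T)$ then $\alpha\in\autf^0(S)$ --- is not resolved by your sketch. Passing to $T^*=TC_S(T)$ would require showing $\theta_\calf(\alpha|_{T^*})=1$, but the hypothesis controls $\alpha$ only on $T$: its action on $C_S(T)$ is unconstrained, so nothing forces $\alpha|_{T^*}$ to be a morphism of $O^{p'}(\calf)$, and ``using saturation to control the extra data'' is exactly the step that needs an argument. What actually works (and is the paper's route) is: since $O^{p'}(\calf)$ is saturated, $T$ is strongly closed, and $N_{\alpha|_T}=S$, the extension axiom gives $\beta\in\Aut_{O^{p'}(\calf)}(S)=\autf^0(S)$ with $\beta|_T=\alpha|_T$; then $\beta^{-1}\alpha$ is the identity on $T$ and, because $T\ge\hyp(\calf)$, it induces an automorphism of $p$-power order on $S/T$, hence is a $p$-element of $\autf(S)$ and lies in $\Inn(S)\le\autf^0(S)$, so $\alpha\in\autf^0(S)$. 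Note that the hypothesis $T\ge\hyp(\calf)$ is essential at precisely this point (without it the implication can fail), and it appears nowhere in your sketch; also note that even granting this step, your ``upgrade'' from $\Aut_{O^{p'}(\cale)}(T)$ to $\Aut_{O^{p'}(\calf)}(T)$ relies on the first part, so the gap there propagates here as well.
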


\begin{proof} Set $\calf_0=O^{p'}(\calf)$. Then 
$T\ge\hyp(\calf)\ge\hyp(\calf_0)$, so by \cite[Theorem I.7.4]{AKO}, there 
is a unique saturated fusion subsystem 
	\[ \cale_0 = \Gen{\Inn(T),O^p(\Aut_{\calf_0}(P))\,\big|\, P\le T} 
	\le \calf_0 \] 
over $T$ of $p$-power index in $\calf_0$. Also, $\cale_0\le\cale$ since 
$\cale = \gen{\Inn(T),O^p(\Aut_{\calf}(P))\,|\, P\le T}$. 

We first prove that $O^{p'}(\cale)\le O^{p'}(\calf)$. 
By \cite[Lemma 3.4(b)]{BCGLO2}, 
	\[ \calf = \gen{O^p_*(\calf),\autf(S)} = \gen{O^p_*(\calf),\Inn(S)} 
	\quad\textup{where}\quad O^p_*(\calf) = \gen{O^p(\autf(P))\,|\, 
	P\le S} \]
(as fusion systems over $S$), where the second equality holds since 
$\Inn(S)\in\sylp{\autf(S)}$. Also, 
$T$ is strongly closed in $O^p_*(\calf)$ by definition of $\hyp(\calf)$ and 
since $T\ge\hyp(\calf)$. So $T$ is also strongly closed in 
$\calf=\gen{O^p_*(\calf),\Inn(S)}$ (recall $T\nsg S$).

Let $\calf_0|_T\subseteq\calf_0$ and $\calf|_T\subseteq\calf$ be the full 
subcategories with objects the subgroups of $T$. Also, 
$\calf=\gen{\calf_0,\Aut_\calf(S)}$ by Lemma \ref{l:Ec=Fc}(c) and 
$\calf_0|_T=\gen{\cale_0,\Aut_{\calf_0}(T)}$ by \cite[Lemma 3.4(b)]{BCGLO2} 
again, so 
	\[ \calf|_T = \Gen{\calf_0|_T,\alpha|_T \,\big|\, \alpha\in\autf(S)} 
	= \gen{\cale_0,\Aut_\calf(T)}  \]
since $T$ is strongly closed. Also, $\Aut_\calf(T)$ normalizes $\calf_0|_T$ 
since $\Aut_{\calf_0}(T)$ and $\autf(S)$ both normalize it, and hence 
$\autf(T)$ also normalizes $\cale_0$ by definition of $\cale_0$. Thus 
$\autf(T)\le\Aut(\cale_0)$, so 
$\cale\le\calf|_T\le\gen{\cale_0,\Aut(\cale_0)}$, and $\cale_0$ has index 
prime to $p$ in $\cale$ by Lemma \ref{l:Op'*(F)-cr}. Since $\cale_0$ is 
saturated, $O^{p'}(\cale)\le\cale_0\le \calf_0=O^{p'}(\calf)$.


It remains to show that $\quotfus\calf$ is isomorphic to a 
subquotient of $\quotfus\cale$. Consider the homomorphisms 
	\begin{align*} 
	\quotfus\calf &= \autf(S)/\Aut_{\calf_0}(S) \Right3{f_1} 
	\Aut_\calf(T)/\Aut_{\calf_0}(T) \cong\Out_\calf(T)/\Out_{\calf_0}(T) \\
	\quotfus\cale &= \Out_\cale(T)/\Out_{O^{p'}(\cale)}(T) 
	\Onto3{f_2} \Out_\cale(T)/\Out_{\cale_0}(T) \RIGHT3{f_3}{\cong} 
	\Out_\calf(T)/\Out_{\calf_0}(T) 
	\end{align*} 
where $f_1$ is induced by restriction to $T$ (recall that $T$ is 
strongly closed in $\calf$), $f_2$ is the natural surjection, and $f_3$ 
is induced by the inclusion 
$\Out_\cale(T)\le\outf(T)$. Here, $f_3$ is an isomorphism since 
$\Out_\cale(T)=O^p(\outf(T))$ and $\Out_{\cale_0}(T)=O^p(\Out_{\calf_0}(T))$ 
by the above descriptions of $\cale$ and $\cale_0$, 
both have order prime to $p$, and $\Out_{\calf_0}(T)\ge O^{p'}(\outf(T))$ 
since $\calf_0=O^{p'}(\calf)\ge O^{p'}_*(\calf)$. 

If $\alpha\in\autf(S)$ is such that 
$\alpha|_T\in\Aut_{\calf_0}(T)$, then by the extension axiom (Definition 
\ref{d:sfs}(b)), there is 
$\beta\in\Aut_{\calf_0}(S)$ such that $\beta|_T=\alpha|_T$, so 
$\beta^{-1}\alpha$ is the identity on $T$, induces an automorphism of 
$p$-power order on $S/T$, and hence has $p$-power order and lies in 
$\Inn(S)$. So $f_1$ is injective, and $\quotfus\calf$ is isomorphic to 
a subquotient of $\quotfus\cale$. 
\end{proof}

The next lemma is useful when comparing subsystems of index prime to $p$ in 
a simple group with those in its quasisimple coverings. 

\begin{Lem} \label{mod-Z(F)} \fr2
Let $\calf$ be a saturated fusion system over a finite $p$-group $S$. Then 
for each central subgroup $Z\le Z(\calf)$, we have 
$O^{p'}(\calf/Z)=O^{p'}(\calf)/Z$ and 
$\quotfus{\calf/Z}\cong\quotfus\calf$. Thus if $\calf=\calf_S(G)$ for a 
finite group $G$, then for each $Z\le Z(G)$ (not necessarily a 
$p$-group), $\quotfus{\calf_{SZ/Z}(G/Z)}\cong\quotfus\calf$.
\end{Lem}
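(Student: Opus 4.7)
The plan is to apply Proposition \ref{p:pi1(Fcr)} with $\scrp=\calf^c$, reducing the first equality to a comparison of universal strongly multiplicative maps on $\Mor(\calf^c)$ and $\Mor((\calf/Z)^c)$. First I would observe that since $Z\le Z(\calf)\le Z(S)$, every $\calf$-centric subgroup contains $Z$ (as $Z\le C_S(P)\le P$ for $P\in\calf^c$) and every morphism in $\calf^c$ fixes $Z$ pointwise (the extension demanded by centrality has the same domain as the original morphism, so it must coincide with it). Using the standard fact that $P\in\calf^c\iff P/Z\in(\calf/Z)^c$ when $Z\le Z(\calf)$, the quotient $\pi\colon S\twoheadrightarrow S/Z$ induces a functor $\pi_*\colon\calf^c\to(\calf/Z)^c$ which is bijective on objects and surjective on each hom-set. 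The fiber kernel $K_P=\ker(\Aut_\calf(P)\to\Aut_{\calf/Z}(P/Z))$ embeds into $\Hom(P/Z,Z)$ via $\alpha\mapsto(xZ\mapsto x^{-1}\alpha(x))$ (well-defined because $\alpha|_Z=\Id_Z$), so $K_P$ is a $p$-group, and in particular $K_P\le O^{p'}(\Aut_\calf(P))$.

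I would then build mutually inverse maps between $\quotfus\calf$ and $\quotfus{\calf/Z}$. On one side, $\theta_{\calf/Z}\circ\pi_*$ is strongly multiplicative (since $\pi_*$ sends $O^{p'}(\Aut_\calf(P))$ onto $O^{p'}(\Aut_{\calf/Z}(P/Z))$), so Proposition \ref{p:pi1(Fcr)} yields a unique $f\colon\quotfus\calf\to\quotfus{\calf/Z}$ with $f\circ\theta_\calf=\theta_{\calf/Z}\circ\pi_*$. On the other, since $K_P\le O^{p'}(\Aut_\calf(P))$ the map $\theta_\calf$ is constant on fibers of $\pi_*$ (two morphisms with the same image in $\calf/Z$ differ by an element of $K_R$ for $R=\varphi(P)$), so it descends to $\til\theta\colon\Mor((\calf/Z)^c)\to\quotfus\calf$, which is itself strongly multiplicative because every $\bar\alpha\in O^{p'}(\Aut_{\calf/Z}(P/Z))$ lifts to some $\alpha\in O^{p'}(\Aut_\calf(P))$ ($O^{p'}$ is preserved by surjections with $p$-group kernel). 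Universality then produces $g$ with $g\circ\theta_{\calf/Z}=\til\theta$, and the uniqueness clauses force $f$ and $g$ to be mutually inverse, giving $\quotfus\calf\cong\quotfus{\calf/Z}$.

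The equality $O^{p'}(\calf/Z)=O^{p'}(\calf)/Z$ then follows by tracking preimages of $1$: the sets $\theta_\calf^{-1}(1)$ and $\theta_{\calf/Z}^{-1}(1)$ correspond under $\pi_*$, and each of $O^{p'}(\calf)$ and $O^{p'}(\calf/Z)$ is generated from these preimages by Alperin's fusion theorem (Theorem \ref{AFT}), so the two fusion systems over $S/Z$ agree. For the final assertion with $Z\le Z(G)$ not necessarily a $p$-group, I would decompose $Z=Z_p\times Z_{p'}$: the standard fact that $p$-fusion is unchanged by quotienting by a central $p'$-subgroup gives $\calf_S(G)=\calf_S(G/Z_{p'})$, and $Z_p\le Z(G)\cap S$ is central in $\calf_S(G)$ (each of its morphisms is $c_g$ for $g$ centralizing $Z_p$), so the first part yields $\quotfus{\calf_{SZ/Z}(G/Z)}\cong\quotfus{\calf_S(G)}$.

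The main obstacle is securing the identification $(\calf/Z)^c=\{P/Z:P\in\calf^c\}$ together with the saturation of $\calf/Z$; both are standard for central $p$-quotients of saturated fusion systems, and once they are in place the remainder of the argument is essentially formal bookkeeping with the universal property.
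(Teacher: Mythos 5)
Your argument breaks at its very first step: the ``standard fact'' that $P\in\calf^c\iff P/Z\in(\calf/Z)^c$ when $Z\le Z(\calf)$ is false in the forward direction, so the functor $\pi_*\:\calf^c\too(\calf/Z)^c$ you base everything on is not defined. For a counterexample take $\calf=\calf_S(S)$ with $S$ extraspecial of order $p^3$ (or $S\cong Q_8$ for $p=2$) and $Z=Z(S)=Z(\calf)$: any subgroup $P$ of index $p$ is $\calf$-centric, but $P/Z$ has order $p$ inside $S/Z\cong C_p\times C_p$ and is not $(\calf/Z)$-centric. Consequently $\theta_{\calf/Z}\circ\pi_*$ is not defined on all of $\Mor(\calf^c)$, the homomorphism $f$ does not exist as constructed, and the later step ``$\theta_\calf^{-1}(1)$ and $\theta_{\calf/Z}^{-1}(1)$ correspond under $\pi_*$'' collapses as well. (The backward direction is fine, so your descended map $\til\theta$ on $\Mor((\calf/Z)^c)$ does make sense; it is only the forward passage from $\calf^c$ to $(\calf/Z)^c$ that fails.)

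This is exactly the point where the paper's proof differs, and it is the reason Proposition \ref{p:pi1(Fcr)} is stated for subfamilies $\scrp\supseteq\calf^{cr}$ rather than only for $\calf^c$: one restricts to $P\in\calf^{cr}$, where the \emph{radical} hypothesis rescues the implication. There the subgroup $\Delta=\{\alpha\in\autf(P)\,|\,[\alpha,P]\le Z\}$ is a normal $p$-subgroup of $\autf(P)$ (your $\Hom(P/Z,Z)$-embedding shows this), hence $\Delta\le O_p(\autf(P))=\Inn(P)$, and from this one deduces $C_{S/Z}(P/Z)\le P/Z$, i.e.\ $P/Z\in(\calf/Z)^c$. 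With the square formed over $\Mor(\calf^{cr})$ instead of $\Mor(\calf^c)$, the paper gets a surjection $f\:\quotfus\calf\too\quotfus{\calf/Z}$, proves $O^{p'}(\calf)/Z=O^{p'}(\calf/Z)$ by a two-sided containment (the commuting square plus Alperin's fusion theorem for one inclusion; checking that $O^{p'}(\calf)/Z$ has index prime to $p$ in $\calf/Z$ for the other), and then deduces $\Ker(f)=1$ from that equality, rather than constructing an inverse map by universality. Your overall strategy could be repaired along these lines, but as written the central identification it relies on is simply not true.
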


\begin{proof} Assume $P\in\calf^{cr}$, and consider the subgroup 
$\Delta=\{\alpha\in\autf(P)\,|\,[\alpha,P]\le Z\}$. All elements of 
$\autf(P)$ restrict to the identity on $Z$, so $\Delta\nsg\autf(P)$, and 
each $\alpha\in\Delta$ induces the identity on $Z$ and on $S/Z$ and hence 
has $p$-power order \cite[Corollary 5.3.3]{Gorenstein}. Thus $\Delta\le 
O_p(\autf(P))=\Inn(P)$ since $P$ is $\calf$-radical. So if $gZ\in 
C_{S/Z}(P/Z)$, then $c_g\in\Inn(P)$, and $g\in C_S(P)P=P$. Since this 
applies to all members of $P^\calf$, we see that $P/Z\in(\calf/Z)^c$. 

Consider the following diagram: 
	\[ \vcenter{\xymatrix@C=35pt{ 
	\Mor(\calf^{cr}) \ar@{->>}[r]^-{\theta_\calf^{cr}} \ar[d]_{P\mapsto P/Z} & 
	\quotfus\calf \ar@{-->>}[d]^{f} \\ 
	\Mor((\calf/Z)^c) \ar@{->>}[r]^-{\theta_{\calf/Z}} & \quotfus{\calf/Z} 
	}} \] 
where $\theta_\calf^{cr}$ is the restriction of $\theta_\calf$, and 
where $f$ exists (making the square commute) by the universality of 
$\theta_\calf^{cr}$ (Proposition \ref{p:pi1(Fcr)}). This commutativity 
implies that $(O^{p'}(\calf))^{cr}/Z\le O^{p'}(\calf/Z)$, and hence that 
$O^{p'}(\calf)/Z\le O^{p'}(\calf/Z)$ by Alperin's fusion theorem (Theorem 
\ref{AFT}). 

For each $P\le S$ such that $P\ge Z$ and $P/Z$ is fully normalized in 
$\calf/Z$, $\Aut_{O^{p'}(\calf)/Z}(P/Z)$ is normal in $\Aut_{\calf/Z}(P/Z)$ 
and contains $\Aut_{S/Z}(P/Z)\in\sylp{\Aut_{\calf/Z}(P/Z)}$, and thus 
contains $O^{p'}(\Aut_{\calf/Z}(P/Z))$. So $\Aut_{O^{p'}(\calf)/Z}(P/Z)\ge 
O^{p'}(\Aut_{\calf/Z}(P/Z))$ for all $P/Z\le S/Z$, and $O^{p'}(\calf)/Z$ 
has index prime to $p$ in $\calf/Z$. By Proposition \ref{p:thetaF}(b), 
$O^{p'}(\calf)/Z \geq O^{p'}(\calf/Z)$.

We next show that $f$ is an isomorphism. Let $\alpha\in\autf(S)$ be such 
that $\theta_\calf(\alpha)\in\Ker(f)$, and let 
$\alpha'\in\Aut_{\calf/Z}(S/Z)$ be the induced automorphism of 
$S/Z$. Thus 
	\[ \alpha' \in \theta_{\calf/Z}^{-1}(1) =
	\Aut_{O^{p'}(\calf/Z)}(S/Z) = \Aut_{O^{p'}(\calf)/Z}(S/Z), \]
so there is $\beta\in\Aut_{O^{p'}(\calf)}(S)$ such that $\beta^{-1}\alpha$ 
induces the identity on $S/Z$. But $\beta^{-1}\alpha$ also induces the 
identity on $Z\le Z(\calf)$, so it has $p$-power order, and 
$\theta_\calf(\alpha)=\theta_\calf(\beta)=1$ since $\quotfus\calf$ has 
order prime to $p$. This proves that $\Ker(f)=1$, and hence that $f$ 
is an isomorphism.

The last statement is now immediate.
\end{proof}

The next lemma deals with normal subgroups of subsystems of index 
prime to $p$.

\begin{Lem} \label{l:QnsgEnsgF} \fr2
Assume $\cale\le\calf$ are saturated fusion systems over $S$, where $\cale$ 
has index prime to $p$ in $\calf$. Then for $Q\nsg S$, $Q\nsg\calf$ if and 
only if $Q\nsg\cale$ and $Q$ is weakly closed in $\calf$. 
\end{Lem}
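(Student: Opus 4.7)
The plan is to prove the two implications separately. The backward direction uses the Frattini condition (Lemma~\ref{l:Ec=Fc}(c)) together with weak closure, while the forward direction uses Alperin's fusion theorem (Theorem~\ref{AFT}) applied to $\cale$ together with the identity $\cale^{cr}=\calf^{cr}$ (Lemma~\ref{l:Ec=Fc}(a)).

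For the backward direction, assume $Q\nsg\cale$ and $Q$ is weakly closed in $\calf$. Given $\varphi\in\homf(A,B)$, I would factor $\varphi=\alpha\circ\varphi_0$ via the Frattini condition, with $\alpha\in\autf(S)$ and $\varphi_0\in\Hom_\cale(A,\alpha^{-1}(B))$. Extending $\varphi_0$ using $Q\nsg\cale$ then produces $\widetilde{\varphi}_0\in\Hom_\cale(QA,Q\alpha^{-1}(B))$ with $\widetilde{\varphi}_0(Q)=Q$. Weak closure of $Q$ in $\calf$ gives $\alpha(Q)=Q$, so $Q\alpha^{-1}(B)=\alpha^{-1}(QB)$, and $\widetilde{\varphi}:=\alpha\circ\widetilde{\varphi}_0\in\homf(QA,QB)$ is the desired extension of $\varphi$ with $\widetilde{\varphi}(Q)=Q$, proving $Q\nsg\calf$.

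For the forward direction, assume $Q\nsg\calf$. Weak closure is immediate: applying the defining extension property to any $\varphi\in\homf(Q,S)$ shows $\varphi(Q)=Q$. To establish $Q\nsg\cale$, I would invoke the standard fact that $Q\nsg\calf$ forces $Q\le P$ for every $P\in\calf^c$ (see, e.g., \cite[Proposition~I.4.5]{AKO}). Hence for any $P\in\cale^{cr}=\calf^{cr}$ one has $QP=P$, and the extension of any $\alpha\in\autf(P)$ demanded by $Q\nsg\calf$ must coincide with $\alpha$, forcing $\alpha(Q)=Q$. In particular every $\alpha\in\Aut_\cale(P)\subseteq\autf(P)$ is its own extension in $\cale$ preserving $Q$, so $\Aut_\cale(P)\subseteq N_\cale(Q)$ for every such $P$. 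Alperin's fusion theorem (Theorem~\ref{AFT}) for $\cale$, together with the closure of $N_\cale(Q)$ under composition and restriction, then yields $\cale\subseteq N_\cale(Q)$, whence $Q\nsg\cale$.

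The main delicate point is the cited fact that $Q\nsg\calf$ implies $Q\le P$ for every $\calf$-centric $P$; granted this, the proof is a clean combination of Alperin's theorem with Lemma~\ref{l:Ec=Fc}, and I do not expect any further obstacle.
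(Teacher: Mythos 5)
Your backward direction is exactly the paper's argument: factor $\varphi$ through the Frattini condition of Lemma \ref{l:Ec=Fc}(c), use weak closure to get $\alpha(Q)=Q$, and extend $\varphi_0$ inside $\cale$ using $Q\nsg\cale$. The paper dismisses the converse as clear; your argument for it is correct and a reasonable way to make it explicit: since $Q\le P$ for $P\in\calf^{cr}$, the extension of $\alpha\in\autf(P)$ demanded by normality has domain $QP=P$, hence equals $\alpha$ and fixes $Q$, and then Alperin's fusion theorem (Theorem \ref{AFT}) applied to $\cale$, with $\cale^{cr}=\calf^{cr}$ from Lemma \ref{l:Ec=Fc}(a), gives $\cale=N_\cale(Q)$, i.e., $Q\nsg\cale$.

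One correction on the point you single out as delicate: it is \emph{not} true that $Q\nsg\calf$ forces $Q\le P$ for every $P\in\calf^c$. For example, $S\nsg\calf_S(S)$ when $S\cong D_8$, yet the Klein four subgroups are $\calf_S(S)$-centric and do not contain $S$. What is true --- and is all your argument actually uses --- is the containment for $P\in\calf^{cr}$ (equivalently, for $S$ and the essential subgroups), which is what the normality criterion in \cite[Proposition I.4.5]{AKO} provides. With the cited fact restated for $\calf$-centric $\calf$-radical subgroups rather than all $\calf$-centric ones, your proof is complete and agrees in substance with the paper's.
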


\begin{proof} Assume $Q\nsg\cale$ and $Q$ is weakly closed in $\calf$. By 
the Frattini condition (Lemma \ref{l:Ec=Fc}(c)), for each 
$\varphi\in\Hom_\calf(P,R)$, there are $\alpha\in\Aut_{\calf}(S)$ and 
$\varphi_0\in\Hom_{\cale}(P,\alpha^{-1}(R))$ such that 
$\varphi=(\alpha|_{\alpha^{-1}(R)})\circ\varphi_0$. Then $\alpha(Q)=Q$ 
since $Q$ is weakly closed, and $\varphi_0$ extends to a morphism 
$\4\varphi_0\in\Hom_{\cale}(PQ,\alpha^{-1}(RQ))$ such that 
$\4\varphi_0(Q)=Q$ since $Q\nsg\cale$. So 
$(\alpha|_{\alpha^{-1}(RQ)})\circ\4\varphi_0\in\homf(PQ,RQ)$ extends 
$\varphi$. This proves that $Q\nsg\calf$, and the converse is clear. 
\end{proof}

The next lemma, which gives one criterion for proving that $O^{p'}(\calf)$ 
is simple, will be applied in Section \ref{s:simplicity}. 

\begin{Lem} \label{l:no.str.cl.}
Fix a prime $p$ and a finite $p$-group $S$, let $\calf$ be a saturated 
fusion system over $S$, and assume that no nontrivial proper subgroup of 
$S$ is strongly closed in $\calf$. Then either $O^{p'}(\calf)$ is simple; 
or there are subgroups $T_1,\dots,T_k<S$ (for $2\le 
k\le|\quotfus\calf|$) that are strongly closed in $O^{p'}(\calf)$ and 
$\autf(S)$-conjugate to each other, and such that $S=T_1\times\dots\times 
T_k$. In particular, $\calf$ is simple if $\calf=O^{p'}(\calf)$.
\end{Lem}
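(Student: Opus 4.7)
The plan is to prove the dichotomy by producing, in the non-simple case, the desired direct-product decomposition of $S$ from the $\autf(S)$-orbit of a minimal proper nontrivial subgroup of $S$ that is strongly closed in $O^{p'}(\calf)$. The key ingredients are (i) the equivalence between simplicity of $\calg:=O^{p'}(\calf)$ and the absence of proper nontrivial strongly closed subgroups in $\calg$; (ii) the Frattini condition of Lemma \ref{l:Ec=Fc}(c), which promotes an $\autf(S)$-invariant subgroup strongly closed in $\calg$ to one strongly closed in $\calf$; and (iii) the minimality of $T$, which forces pairwise trivial intersections within the orbit.

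I would first verify that $\calg=O^{p'}(\calf)$ is simple whenever it contains no proper nontrivial strongly closed subgroup. Any proper nontrivial normal subsystem of $\calg$ sits over a strongly closed subgroup of $\calg$, which the hypothesis forces to be $S$. But $\quotfus\calg=1$ (since $O^{p'}(\calg)=\calg$: any subsystem of index prime to $p$ in $\calg$ has the same property in $\calf$ and hence contains $O^{p'}(\calf)=\calg$), so Proposition \ref{p:thetaF}(b) rules out proper subsystems of index prime to $p$; and since $\hyp(\calg)$ is strongly closed, the hypothesis forces $\hyp(\calg)=S$, ruling out proper $p$-power index subsystems. Thus no proper nontrivial normal subsystem exists and $\calg$ is simple.

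Otherwise pick $T<S$ proper nontrivial strongly closed in $\calg$ of minimal order, and let $\{T_1,\dots,T_N\}$ (with $T_1=T$) be its $\autf(S)$-orbit. Each $T_i$ is strongly closed in $\calg$; by minimality of $|T|$, for $i\ne j$ the intersection $T_i\cap T_j$ (again strongly closed in $\calg$) is trivial, so $[T_i,T_j]\le T_i\cap T_j=1$ (as $T_i,T_j\nsg S$) and the $T_i$ pairwise commute. The subgroup $T^*=T_1T_2\cdots T_N$ is $\autf(S)$-invariant and, as a product of subgroups strongly closed in $\calg$, is itself strongly closed in $\calg$ (a standard fact, e.g.\ via quotient fusion systems). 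Combined with $\autf(S)$-invariance via the Frattini decomposition $\varphi=\alpha\circ\varphi_0$ (with $\alpha\in\autf(S)$ and $\varphi_0\in\Hom_\calg$), one concludes $T^*$ is strongly closed in $\calf$; hence $T^*=S$ by hypothesis.

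Finally, I extract the direct-product factors greedily. Starting from $U_1=T_1$, whenever $U_m=T_{i_1}\times\cdots\times T_{i_m}<S$ some $T_j$ satisfies $T_j\not\le U_m$, and since $U_m$ is itself a product of subgroups strongly closed in $\calg$ (hence strongly closed), the intersection $T_j\cap U_m$ is strongly closed in $\calg$, proper in $T_j$, hence trivial by minimality; appending $T_j$ yields $U_{m+1}=U_m\times T_j$. The process terminates with $S=T_{i_1}\times\cdots\times T_{i_k}$. The bound $k\le|\quotfus\calf|$ follows from orbit-stabilizer: since $\autf^0(S)=\Aut_\calg(S)$ fixes every subgroup strongly closed in $\calg$, it stabilizes $T_1$, giving $N\le|\autf(S):\autf^0(S)|=|\quotfus\calf|$; and $k\ge 2$ since each factor is proper. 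The ``in particular'' then follows: $\calf=O^{p'}(\calf)$ forces $|\quotfus\calf|=1$, which is incompatible with $k\ge 2$, so $O^{p'}(\calf)=\calf$ is simple. The main technical hurdle is the fact that a product of strongly closed subgroups is strongly closed; since $R\cap T^*$ need not equal $\prod_i(R\cap T_i)$ for a general centric radical $R$, this does not follow immediately from Alperin's fusion theorem and requires passing to quotient fusion systems or a similarly careful approach.
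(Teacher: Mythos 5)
Your strategy is the same as the paper's: choose a minimal subgroup $1\ne T<S$ strongly closed in $O^{p'}(\calf)$, form its $\autf(S)$-orbit, use the fact that a product of strongly closed subgroups is strongly closed (the paper cites \cite[Theorem 2]{A-gfit} or \cite[Theorem 5.22]{Craven} for exactly the ``standard fact'' you invoke), conclude via the Frattini decomposition that this product is strongly closed in $\calf$ and hence equals $S$, and then extract a direct-product decomposition by minimality, with the bound $k\le|\quotfus\calf|$ coming from the fact that $\Aut_{O^{p'}(\calf)}(S)=\autf^0(S)$ stabilizes every subgroup strongly closed in $O^{p'}(\calf)$. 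All of that part is correct and matches the paper's proof.

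The gap is in your opening reduction, where you must show that if $O^{p'}(\calf)$ is not simple then some subgroup $1\ne T<S$ is strongly closed in it; that is, you must rule out a proper nontrivial normal subsystem $\cale\nsg O^{p'}(\calf)$ whose underlying subgroup is $S$ itself. You exclude only two kinds of subsystems over $S$ --- those of index prime to $p$ (via $O^{p'}(O^{p'}(\calf))=O^{p'}(\calf)$) and those of $p$-power index (via $\hyp$) --- and then assert that no proper nontrivial normal subsystem over $S$ exists. That conclusion does not follow from what you proved: a normal subsystem over $S$ is not, by definition, of either of these two special types, so your two cases do not exhaust the possibilities. What rescues the argument is the fact that a normal subsystem over all of $S$ automatically has index prime to $p$: for $P$ fully normalized, $\Aut_\cale(P)$ is normal in $\Aut_{O^{p'}(\calf)}(P)$ and contains $\Aut_S(P)\in\sylp{\Aut_{O^{p'}(\calf)}(P)}$, hence contains $O^{p'}(\Aut_{O^{p'}(\calf)}(P))$; this is \cite[Lemma 1.26]{AOV1}, which the paper cites at precisely this point (compare the argument in Lemma \ref{l:Op'*(F)-cr}). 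With that in hand, saturation of $\cale$ and Proposition \ref{p:thetaF}(b) give $\cale\ge O^{p'}(O^{p'}(\calf))=O^{p'}(\calf)$, a contradiction; your appeal to $\hyp(O^{p'}(\calf))$ (which in any case could be trivial rather than all of $S$) then becomes unnecessary. Once this step is repaired, your proof is essentially the paper's.
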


\begin{proof} Assume $O^{p'}(\calf)$ is not simple, and let $1\ne\cale\nsg 
O^{p'}(\calf)$ be a nontrivial proper normal subsystem over $1\ne T\nsg S$. 
In particular, $T$ is strongly closed in $O^{p'}(\calf)$ (one of the 
conditions for $\cale$ to be normal). If $T=S$, then $\cale$ has index 
prime to $p$ in $O^{p'}(\calf)$ by \cite[Lemma 1.26]{AOV1}, which is 
impossible since $\cale<O^{p'}(\calf)$ and 
$O^{p'}(O^{p'}(\calf))=O^{p'}(\calf)$. So $T<S$. 

Thus there are proper nontrivial subgroups of $S$ strongly closed in 
$O^{p'}(\calf)$, and so $O^{p'}(\calf)<\calf$ by assumption. Let 
$1\ne U<S$ be minimal among all such subgroups. Set 
$m=|\quotfus\calf|=|\autf(S)/\Aut_{O^{p'}(\calf)}(S)|$ (Proposition 
\ref{p:thetaF}), choose coset representatives 
$\alpha_1,\dots,\alpha_m\in\autf(S)$ for $\Aut_{O^{p'}(\calf)}(S)$, and set 
$U_i=\alpha_i(U)$. Then each $U_i$ is a minimal strongly closed subgroup in 
$O^{p'}(\calf)$, and $\5U=U_1\cdots U_m$ is strongly closed in 
$O^{p'}(\calf)$ by \cite[Theorem 2]{A-gfit} or \cite[Theorem 5.22]{Craven}. 
Also, $\5U$ is normalized by $\autf(S)$ by construction. Since each 
morphism in $\calf$ is the composite of a morphism in $O^{p'}(\calf)$ 
followed by the restriction of some element of $\autf(S)$, $\5U$ is 
strongly closed in $\calf$. So $\5U=S$ by assumption. 

For each $1\le i\le m-1$, $U_1\cdots U_i$ is strongly closed in 
$O^{p'}(\calf)$ by \cite[Theorem 2]{A-gfit} again, so $(U_1\cdots U_i)\cap 
U_{i+1}$ is also strongly closed. So by the minimality of $U$ among 
strongly closed subgroups, either $(U_1\cdots U_i)\cap U_{i+1}=1$ or 
$U_{i+1}\le U_1\cdots U_i$. Hence there is a subset 
$I\subseteq\{1,\dots,m\}$ such that $S$ is the direct product of the $U_i$ 
for $i\in I$, and $|I|\ge2$ since $U<S$. 
\end{proof}


\section{Fusion systems with centric and weakly closed subgroups}
\label{s:w.cl.}

We saw in the last section that when $\calf$ is a saturated fusion system 
over a finite $p$-group $S$, its subsystems and extensions of index prime 
to $p$ are determined by their automizers on $S$. In this section, we show 
that in fact, it suffices to consider the automizers of some subgroup 
$A\nsg S$ that is $\calf$-centric and weakly closed in $\calf$. In 
particular, this works in many cases when $A$ is the $p$-power torsion in 
the maximal torus of a finite group of Lie type in defining characteristic 
different from $p$, and allows us to work with $\autf(A)$ rather than 
$\autf(S)$.

We begin with three lemmas that describe the relationship between 
$\autf(A)$ and $\quotfus\calf$ for such a subgroup $A\nsg S$, and show how 
this can be used to get upper and lower bounds for $|\quotfus\calf|$.

\begin{Lem} \label{theta-F-A} \fr3
Let $\calf$ be a saturated fusion system over a finite $p$-group $S$, and 
assume $A\nsg S$ is a subgroup that is $\calf$-centric and weakly closed in 
$\calf$. Let 
	\[ \thet\calf{A} \: \autf(A) \Right5{} \quotfus\calf \]
be the restriction of $\theta_\calf\:\Mor(\calf^c)\too\quotfus\calf$ to 
$\autf(A)$. Then 
\begin{enuma} 
\item $\thet\calf{A}$ is surjective and
$\Ker(\thet\calf{A})=\Aut_{O^{p'}(\calf)}(A)$;
\item $\quotfus\calf=1$ if $O^{p'}(\autf(A))=\autf(A)$; and 
\item for $\cale\le\calf$ of index prime to $p$, $\cale=\calf$ if and only 
if $\Aut_\cale(A)=\autf(A)$.
\end{enuma}
\end{Lem}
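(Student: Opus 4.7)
The plan is to deduce all three parts from Proposition \ref{p:thetaF}, using two basic observations: since $A\nsg S$ is weakly closed in $\calf$, every $\alpha\in\autf(S)$ restricts to some $\alpha|_A\in\autf(A)$; and since $A$ is $\calf$-centric, $A\in O^{p'}(\calf)^c$ as well (the equality $O^{p'}(\calf)^c=\calf^c$ as sets of objects is part of Proposition \ref{p:thetaF}(b)).

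For (a), I would first establish surjectivity. Since $\theta_\calf$ is multiplicative and sends inclusions to the identity element, the equation $\alpha\circ\incl_A^S=\incl_A^S\circ(\alpha|_A)$ (which holds in $\calf^c$ because $\alpha(A)=A$) yields $\theta_\calf(\alpha)=\thet\calf{A}(\alpha|_A)$ for every $\alpha\in\autf(S)$. Since $\theta_\calf|_{\autf(S)}$ surjects onto $\quotfus\calf$ by the definition of $\quotfus\calf$, so does $\thet\calf{A}$. For the kernel, Proposition \ref{p:thetaF}(b) identifies the morphism set of $O^{p'}(\calf)^c$ with $\theta_\calf^{-1}(1)$; intersecting with $\autf(A)$ then gives $\Aut_{O^{p'}(\calf)}(A)=\Ker(\thet\calf{A})$.

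For (b), I would exploit the fact that $\quotfus\calf$ has order prime to $p$. This holds because $\autf^0(S)\supseteq\Aut_{O^{p'}_*(\calf)}(S)\supseteq O^{p'}(\autf(S))$, the latter inclusion by the very definition of $O^{p'}_*(\calf)$ as the subsystem generated by the subgroups $O^{p'}(\autf(P))$. Hence the surjection $\thet\calf{A}$ from (a) factors through $\autf(A)/O^{p'}(\autf(A))$, which is trivial under the hypothesis.

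For (c), I would use the bijection in Proposition \ref{p:thetaF}(b) between subsystems $\cale\le\calf$ of index prime to $p$ and subgroups $H\le\quotfus\calf$, together with the description of $\cale^c$ as a category with morphism set $\theta_\calf^{-1}(H)$. Since $A\in\cale^c=\calf^c$, this gives $\Aut_\cale(A)=(\thet\calf{A})^{-1}(H)$, which by the surjectivity from (a) equals $\autf(A)$ precisely when $H=\quotfus\calf$, precisely when $\cale=\calf$. No serious obstacle arises in any of this: the lemma is essentially a convenient repackaging of Proposition \ref{p:thetaF}, and the role of the hypotheses on $A$ is simply to guarantee that restriction to $A$ is a well-defined operation on all of $\autf(S)$ and that $A$ lies in the right subcategories for the relevant categorical equalities to apply.
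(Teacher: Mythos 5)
Your proof is correct and follows essentially the same route as the paper: surjectivity via weak closure of $A$ (so every element of $\autf(S)$ restricts to $\autf(A)$ compatibly with $\theta_\calf$), and the kernel and part (c) via the identification $\Aut_{\cale_H}(A)=(\thet\calf{A})^{-1}(H)$ coming from Proposition \ref{p:thetaF}(b). The only cosmetic difference is that you quote the ``$(\cale_H)^c=\theta_\calf^{-1}(H)$ as categories'' clause directly where the paper briefly re-derives it for $A$, and you spell out the prime-to-$p$ order argument for (b) which the paper calls immediate.
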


\begin{proof} Since $A$ is weakly closed, each $\alpha\in\autf(S)$ 
restricts to an element of $\autf(A)$, and so the surjectivity of 
$\thet\calf{A}$ follows from that of $\theta_\calf|_{\autf(S)}$ 
(Proposition \ref{p:thetaF}(a)). Point (b) is then immediate. 

Set $\cale_H=\gen{\theta_\calf^{-1}(H)}\le\calf$ for each subgroup 
$H\le\quotfus\calf$. Then $\Aut_{\cale_H}(A)=(\thet\calf{A})^{-1}(H)$, 
since each $\alpha\in\Aut_{\cale_H}(A)$ is a composite of restrictions of 
morphisms in $\theta_\calf^{-1}(H)$ (and inclusions are sent to the 
identity). In particular, $\Aut_{O^{p'}(\calf)}(A)=\Ker(\thet\calf{A})$ 
(the case $H=1$), finishing the proof of (a). For each $\cale\le\calf$ of 
index prime to $p$, $\cale=\cale_H$ for some $H\le\quotfus\calf$ by 
Proposition \ref{p:thetaF}(b), and from the surjectivity of 
$\thet\calf{A}$, we see that $\Aut_\cale(A)=\autf(A)$ implies 
$H=\quotfus\calf$, and hence $\cale=\calf$. 
\end{proof}

One immediate consequence of Lemma \ref{theta-F-A} is the following 
upper bound for $|\quotfus\calf|$.

\begin{Lem} \label{l:Op'F=F:2} \fr3
Let $\calf$ be a saturated fusion system over a finite $p$-group $S$, and 
let $\cale\le\calf$ be a saturated fusion subsystem over $T\le S$. 
Assume that $O^{p'}(\cale)\le O^{p'}(\calf)$, that $A\nsg S$ is 
$\calf$-centric and weakly closed in $\calf$, and also that $A\le T$. Then 
$\Aut_{O^{p'}(\cale)}(A)$ is contained in the kernel of the homomorphism 
$\thet\calf{A}$ from $\autf(A)$ onto $\quotfus\calf$, and hence 
	\[ |\quotfus\calf| \le \bigl| \autf(A) : 
	O^{p'}(\autf(A))\cdot\Aut_{O^{p'}(\cale)}(A) \bigr|. \]
In particular, $O^{p'}(\calf)=\calf$ if 
$\autf(A)=O^{p'}(\autf(A))\cdot\Aut_{O^{p'}(\cale)}(A)$. 
\end{Lem}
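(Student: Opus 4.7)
The plan is to exploit Lemma \ref{theta-F-A}(a), which identifies $\Ker(\thet\calf{A})$ with $\Aut_{O^{p'}(\calf)}(A)$. So the first (and essentially only) thing I must show is the containment
\[ \Aut_{O^{p'}(\cale)}(A) \le \Aut_{O^{p'}(\calf)}(A). \]
This will be immediate from the hypotheses: since $A \le T$ is an object of both $O^{p'}(\cale)$ and $O^{p'}(\calf)$, and since $O^{p'}(\cale)\le O^{p'}(\calf)$ by assumption, every $\cale$-automorphism of $A$ lying in $O^{p'}(\cale)$ is in particular an $\calf$-automorphism of $A$ lying in $O^{p'}(\calf)$. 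Combining this with Lemma \ref{theta-F-A}(a) establishes the first claim of the lemma.

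For the index bound, I would combine the containment above with the fact that $O^{p'}(\calf)$ has index prime to $p$ in $\calf$: by Definition \ref{d:p'-index}(b), this forces $\Aut_{O^{p'}(\calf)}(A) \ge O^{p'}(\autf(A))$. Together these give
\[ \Aut_{O^{p'}(\calf)}(A) \ge O^{p'}(\autf(A))\cdot\Aut_{O^{p'}(\cale)}(A), \]
and then the desired inequality
\[ |\quotfus\calf| \;=\; \bigl|\autf(A) : \Aut_{O^{p'}(\calf)}(A)\bigr| \;\le\; \bigl|\autf(A):O^{p'}(\autf(A))\cdot\Aut_{O^{p'}(\cale)}(A)\bigr| \]
follows by taking indices, using the surjectivity of $\thet\calf{A}$ from Lemma \ref{theta-F-A}(a) to compute $|\quotfus\calf|$ as the first index.

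Finally, for the ``In particular'' clause, if $\autf(A)=O^{p'}(\autf(A))\cdot\Aut_{O^{p'}(\cale)}(A)$, then the chain of inclusions above forces $\Aut_{O^{p'}(\calf)}(A)=\autf(A)$, and Lemma \ref{theta-F-A}(c) applied to the subsystem $O^{p'}(\calf)\le\calf$ of index prime to $p$ then yields $O^{p'}(\calf)=\calf$.

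There is no real obstacle here: the lemma is a clean bookkeeping consequence of Lemma \ref{theta-F-A} together with the defining property of subsystems of index prime to $p$. The only subtlety to be careful about is making sure $A$ is a legitimate object of $O^{p'}(\cale)$ (so that $\Aut_{O^{p'}(\cale)}(A)$ is even defined), which is guaranteed by $A\le T$ and the fact that $O^{p'}(\cale)$ is a fusion system over $T$.
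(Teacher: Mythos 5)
Your proposal is correct and follows essentially the same route as the paper: reduce everything to the identification $\Ker(\thet\calf{A})=\Aut_{O^{p'}(\calf)}(A)$ from Lemma \ref{theta-F-A}(a), note $\Aut_{O^{p'}(\cale)}(A)\le\Aut_{O^{p'}(\calf)}(A)$ from the hypothesis $O^{p'}(\cale)\le O^{p'}(\calf)$, and deduce the index bound from surjectivity of $\thet\calf{A}$. The only cosmetic difference is that the paper gets $O^{p'}(\autf(A))\le\Ker(\thet\calf{A})$ from $p\nmid|\quotfus\calf|$, whereas you invoke Definition \ref{d:p'-index}(b); these are interchangeable.
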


\begin{proof} Since $O^{p'}(\cale)\le O^{p'}(\calf)$, we have 
$\Aut_{O^{p'}(\cale)}(A) \le \Aut_{O^{p'}(\calf)}(A) = 
\Ker(\thet\calf{A})$ by Lemma 
\ref{theta-F-A}(a). The other claims now follow from the surjectivity of 
$\thet\calf{A}$ and since $p\nmid|\quotfus\calf|$.
\end{proof}

If $A$ is an \emph{abelian} subgroup that is $\calf$-centric and 
weakly closed, then we can say more. The next lemma will be our main tool 
for getting lower bounds on $|\quotfus\calf|$ when it is nontrivial.

\begin{Lem} \label{p:p'-index} \fr4
Let $\calf$ be a saturated fusion system over the $p$-group $S$, and let 
$A\nsg S$ be an abelian subgroup which is $\calf$-centric and weakly closed 
in $\calf$. Set
	\[ X = \{ t\in A \,|\, t^\calf\subseteq A \}, \]
and assume $X\cap Z(S)\ne\{1\}$. Choose $1\ne Z\le\gen{X\cap Z(S)}$, set 
$\cale=C_\calf(Z)$ and $\cale_0=O^{p'}(\cale)$, and let $K_0\le 
K\nsg\autf(A)$ be the normal closures of $\Aut_{\cale_0}(A)$ and 
$\Aut_\cale(A)$ in $\autf(A)$. Then the following hold.
\begin{enuma} 

\item There is a surjective homomorphism $f\:\quotfus\calf\too\autf(A)/K$ 
with the following property: for each morphism $\varphi\in\homf(P,Q)$ in 
$\calf^c$, there is $\alpha\in\autf(A)$ such that $\alpha|_Z=\varphi|_Z$, 
and for each such $\alpha$, $f(\theta_\calf(\varphi))=\alpha K$. 

\item We have $O^{p'}(\autf(A))K_0 \le \Ker(\thet\calf{A}) \le K$.

\end{enuma}
\end{Lem}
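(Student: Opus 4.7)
The plan is to prove (a) by constructing a strongly multiplicative map $\tilde f \colon \Mor(\calf^c) \to \autf(A)/K$ and invoking the universal property of $\theta_\calf$ from Proposition \ref{p:thetaF}(a); part (b) will then follow formally.

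I will first record two preliminary observations. For any $P \in \calf^c$, $Z \le Z(S) \le C_S(P) \le P$, so $Z \le P$. Writing each $z \in Z$ as a product of elements of $X \cap Z(S)$ and using that $\varphi$ sends each such factor into $A$ (by definition of $X$), I see that $\varphi(Z) \le A$ for every $\varphi \in \homf(P,Q)$ with $P,Q \in \calf^c$, and moreover every element of $\varphi(Z)$ is itself in $X$. Since $Z$ is fully centralized (as $C_S(Z)=S$), the extension axiom applied to $\varphi|_Z^{-1} \colon \varphi(Z) \to Z$—using $A \le C_S(\varphi(Z))$ since $A$ is abelian—produces an extension $A \to S$ whose image must be $A$ since $A$ is weakly closed, so $\varphi|_Z$ lifts to some $\alpha \in \autf(A)$. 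The same strategy applied to $\psi|_W^{-1}$ for $W = Z\cdot\varphi(Z) \le A$—noting that every $\calf$-conjugate of $W$ still lies in $A$ (since $\varphi(Z)$ also consists of $X$-elements), so one can conjugate $W$ inside $A$ to a fully centralized $\calf$-conjugate and compose through it—produces a lift $\beta \in \autf(A)$ with $\beta|_W = \psi|_W$ for any $\psi \in \homf(Q,R)$ in $\calf^c$.

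I would then set $\tilde f(\varphi) = \alpha K$ for any $\alpha \in \autf(A)$ with $\alpha|_Z = \varphi|_Z$; two such lifts differ by an element of $C_{\autf(A)}(Z) = \Aut_\cale(A) \le K$, so $\tilde f$ is well-defined on $\Mor(\calf^c)$. The main step, and what I anticipate to be the main obstacle, is multiplicativity. Naively choosing independent lifts $\alpha,\beta$ of $\varphi|_Z,\psi|_Z$ does not force $(\beta\alpha)|_Z = (\psi\varphi)|_Z$, because $\beta|_{\varphi(Z)}$ need not coincide with $\psi|_{\varphi(Z)}$. The fix is to use the stronger lift from the previous paragraph: pick $\beta \in \autf(A)$ with $\beta|_W = \psi|_W$, so that $(\beta\alpha)|_Z(z) = \beta(\varphi(z)) = \psi(\varphi(z)) = (\psi\varphi)(z)$, and hence $\tilde f(\psi\varphi) = \beta\alpha K = \tilde f(\psi)\tilde f(\varphi)$. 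Strong multiplicativity is then routine: for $P$ fully normalized, $\Aut_S(P) \in \sylp{\autf(P)}$ fixes $Z$ pointwise (since $Z \le Z(S)$), so its lifts lie in $C_{\autf(A)}(Z) \le K$; combined with multiplicativity this annihilates the normal closure $O^{p'}(\autf(P)) = \gen{\Aut_S(P)^{\autf(P)}}$, and the general case follows by conjugating to a fully normalized $\calf$-representative.

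Proposition \ref{p:thetaF}(a) now gives the desired $f \colon \quotfus\calf \to \autf(A)/K$, and since $A \in \calf^c$ with $\tilde f(\alpha) = \alpha K$ for every $\alpha \in \autf(A)$, $f$ is surjective; the stated identity $f(\theta_\calf(\varphi)) = \alpha K$ is precisely the defining property of $\tilde f$. For (b), Lemma \ref{l:Op'F=F:1}(b) applied to the fully centralized subgroup $1 \ne Z$ yields $\cale_0 \le O^{p'}(\calf)$, so $\Aut_{\cale_0}(A) \le \Aut_{O^{p'}(\calf)}(A) = \Ker(\thet\calf{A})$ by Lemma \ref{theta-F-A}(a). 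Since $\Ker(\thet\calf{A}) \nsg \autf(A)$ has index prime to $p$ equal to $|\quotfus\calf|$, it contains the normal closure $K_0$ as well as $O^{p'}(\autf(A))$, giving the left-hand inclusion of (b). The right-hand inclusion follows from (a): for $\beta \in \Ker(\thet\calf{A}) \subseteq \autf(A) \subseteq \Mor(\calf^c)$, the identity in (a) yields $\beta K = f(\theta_\calf(\beta)) = K$, so $\beta \in K$.
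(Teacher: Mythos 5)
Your overall strategy is the same as the paper's: define a map on $\Mor(\calf^c)$ by lifting $\varphi|_Z$ to $\autf(A)$, show it is strongly multiplicative, and invoke the universal property of $\theta_\calf$ from Proposition \ref{p:thetaF}(a); your treatment of well-definedness (via $C_{\autf(A)}(Z)=\Aut_\cale(A)\le K$), strong multiplicativity, surjectivity, and part (b) is correct. The one genuine problem is the construction of the lift $\beta$ of $\psi|_W$ for $W=Z\cdot\varphi(Z)$, which you base on two unproven claims: that every element of $\varphi(Z)$ lies in $X$, and that every $\calf$-conjugate of $W$ lies in $A$ (so that a fully centralized conjugate of $W$ can be chosen \emph{inside} $A$). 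Neither follows from the definition of $X$: an element $z\in Z\le\gen{X\cap Z(S)}$ is a product of elements $t_i\in X\cap Z(S)$, but $X$ is just a subset of $A$ and is not visibly closed under products --- a morphism defined only on $\gen{\varphi(z)}$, or only on $W$, cannot be evaluated on the individual factors $t_i$ or $\varphi(t_i)$, since these need not lie in that subgroup. So you cannot conclude $z\in X$, nor $\rho(W)\le A$ for arbitrary $\rho\in\homf(W,S)$, and the existence of a fully centralized conjugate of $W$ contained in $A$ is not justified.

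Fortunately neither claim is needed, and the repair makes your argument coincide with the paper's. What the lifting really requires is only that $W\le A$ and $\psi(W)\le A$; the latter follows from your own first observation applied to the morphisms $\psi$ and $\psi\circ\varphi$ (both are morphisms in $\calf^c$), since $\psi(W)=\psi(Z)\cdot(\psi\varphi)(Z)$. Now take \emph{any} fully centralized $R\in W^\calf$ --- it need not lie in $A$ --- and $\chi\in\isof(\psi(W),R)$. By the extension axiom, $\chi$ extends to a morphism defined on $C_S(\psi(W))\ge A$ and $\chi\circ\psi|_W$ extends to one defined on $C_S(W)\ge A$; both extensions carry $A$ to $A$ because $A$ is weakly closed, and composing the restriction to $A$ of the second with the inverse of the restriction to $A$ of the first gives $\beta\in\autf(A)$ with $\beta|_W=\psi|_W$. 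This is exactly the paper's auxiliary statement that every $\varphi\in\homf(P,A)$ with $P\le A$ extends to an element of $\autf(A)$, which the paper applies to $\varphi|_{\gen{P\cap X}}$ and $\psi|_{\gen{Q\cap X}}$ in place of your $Z$ and $W$; with this change the rest of your proof goes through verbatim.
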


\begin{proof} Since $\cale_0\le O^{p'}(\calf)$ by Lemma 
\ref{l:Op'F=F:1}(b), 
$\Aut_{\cale_0}(A)\le\Aut_{O^{p'}(\calf)}(A)=\Ker(\thet\calf{A})$, and 
hence $K_0\le\Ker(\thet\calf{A})$. Since $\Im(\thet\calf{A})$ has order 
prime to $p$, this proves the first inclusion in (b).


Before proving the other claims, we check that 
	\beqq \textup{for each $P\le A$ and $\varphi\in\homf(P,A)$, 
	$\varphi$ extends to some $\4\varphi\in\autf(A)$.} \label{e:Aut(A)} 
	\eeqq
To see this, set $Q=\varphi(P)\le A$, choose $R\in P^\calf=Q^\calf$ that is 
fully centralized in $\calf$, and fix $\chi\in\isof(Q,R)$. By the extension 
axiom, $\chi$ extends to $\4\chi\in\homf(C_S(Q),S)$ and $\chi\varphi$ 
extends to $\4\varphi\in\homf(C_S(P),S)$, and $\4\chi(A)=A=\4\varphi(A)$ 
since $A$ is weakly closed. Then $\4\chi^{-1}\4\varphi|_A\in\autf(A)$, and 
$(\4\chi^{-1}\4\varphi)|_P=\chi^{-1}(\chi\varphi)=\varphi$. 

Let $\theta_0:\Aut_{\calf}(A)\too\autf(A)/K$ be the natural map. We 
first extend $\theta_0$ to a strongly multiplicative map defined on 
$\Mor(\calf^c)$ (see Definition \ref{d:multiplicative}). 

Fix $\varphi\in\Mor_{\calf}(P,Q)$, where $P,Q\in\calf^c$. In particular, 
$P,Q\ge Z(S)\ge Z$, and $\varphi(Z)\le\varphi(\gen{X\cap Z(S)}) 
\le A$ by definition of $X$. By \eqref{e:Aut(A)}, $\varphi|_Z$ is 
the restriction of some automorphism $\alpha\in\autf(A)$. If 
$\alpha'\in\autf(A)$ is another automorphism for which 
$\alpha'|_Z=\varphi|_Z$, then $\alpha^{-1}\alpha'\in C_{\autf(A)}(Z)\le K$, 
and so we can define $\theta(\varphi)=\theta_0(\alpha)=\theta_0(\alpha')$. 
We thus have a well defined map of sets 
$\theta\:\Mor(\calf^c)\too\autf(A)/K$ with $\theta|_{\autf(A)}=\theta_0$, 
and $\theta$ is surjective since $\theta_0$ is surjective. If 
$\varphi$ is an inclusion, then we can choose $\alpha=\Id_A$, and hence 
$\theta(\varphi)=1$. Also, $\autf(A)/K$ has order prime to $p$ since 
$K\ge\Aut_S(A)\in\sylp{\autf(A)}$, and hence $\theta(O^{p'}(\autf(P)))=1$ 
for each $P\in\calf^c$. 

It remains to check that $\theta$ sends composites in $\Mor(\calf^c)$ to 
products in $\autf(A)/K$.  Assume $\varphi\in\Hom_\calf(P,Q)$ and 
$\psi\in\Hom_\calf(Q,R)$, where $P,Q,R\in\calf^c$, and set $P^*=\gen{P\cap 
X}$ and $Q^*=\gen{Q\cap X}$.  Then $\varphi(P\cap X)\subseteq Q\cap X$, so 
$\varphi(P^*)\le Q^*$. By \eqref{e:Aut(A)}, there are  
$\alpha,\beta\in\Aut_\calf(A)$ 
which are extensions of $\varphi|_{P^*}$ and $\psi|_{Q^*}$, respectively. 
Then $\beta\circ\alpha$ is an extension 
of $(\psi\circ\varphi)|_{P^*}$, hence of $(\psi\circ\varphi)|_Z$, and thus 
$\theta(\psi\circ\varphi) =\beta\circ\alpha=\theta(\psi)\circ\theta(\varphi)$.  

Since $\theta_\calf$ is universal among strongly multiplicative maps 
(Proposition \ref{p:thetaF}(a)), there is a unique homomorphism 
$f\:\quotfus\calf\too\autf(A)/K$ such that $\theta=f\circ\theta_\calf$, 
and $f$ is surjective since $\theta$ is surjective. 
This proves (a). Also, $K=\Ker(\theta_0)\ge\Ker(\thet\calf{A})$, which 
proves the second inclusion in (b). 
\end{proof}

We next note that two extensions of index prime to $p$ of the same fusion 
system over $S$ are equal, or at least isomorphic, if they have the same 
automizer on some centric and weakly closed subgroup $A\nsg S$. 

\begin{Lem} \label{Op'F1=Op'F2} \fr3
Let $\calf_1$ and $\calf_2$ be saturated fusion systems over the same 
finite $p$-group $S$. Assume that $A\nsg S$ is $\calf_i$-centric and weakly 
closed in $\calf_i$ for $i=1,2$, and also that 
	\[ O^{p'}(\calf_1)=O^{p'}(\calf_2) \qquad\textup{and}\qquad
	\Aut_{\calf_1}(A)=\Aut_{\calf_2}(A). \]
Then there is $\beta\in\Aut(O^{p'}(\calf_2))$ such that $\beta|_A=\Id_A$ 
and $\calf_1=\9\beta\calf_2$. If in addition, 
$H^1(\Out_{O^{p'}(\calf_1)}(A);Z(A))=0$, then $\calf_1=\calf_2$.
\end{Lem}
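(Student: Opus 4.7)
Set $\calf_0 = O^{p'}(\calf_1) = O^{p'}(\calf_2)$, $H_i = \Aut_{\calf_i}(S)$, $N = \Aut_{\calf_0}(S)$, and $\Pi = \Aut_{\calf_1}(A) = \Aut_{\calf_2}(A)$. By Lemma~\ref{F_0<|F_i}, a saturated fusion system $\calf$ over $S$ in which $\calf_0$ is normal of index prime to $p$ is determined by the subgroup $\Aut_\calf(S)/N \le \Out(\calf_0)$; in particular, for $\beta \in \Aut(\calf_0)$ we have $\calf_1 = \9\beta\calf_2$ if and only if $\beta H_2 \beta^{-1} = H_1$. The task thus reduces to producing $\beta \in \Aut(\calf_0)$ with $\beta|_A = \Id_A$ and $\beta H_2 \beta^{-1} = H_1$.

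To construct $\beta$, I apply Lemma~\ref{theta-F-A}(a) to each $\calf_i$ (using $O^{p'}(\calf_i) = \calf_0$) to see that restriction to $A$ induces an isomorphism $H_i/N \cong \Pi/\Aut_{\calf_0}(A)$. The induced restriction homomorphism $\bar r\: \Out(\calf_0) \too \Aut(A)/\Aut_{\calf_0}(A)$ therefore carries both $\bar H_1 := H_1/N$ and $\bar H_2 := H_2/N$ isomorphically onto the common subgroup $\bar\Pi := \Pi/\Aut_{\calf_0}(A)$, so inside $\bar r^{-1}(\bar\Pi) \le \Out(\calf_0)$ the two $p'$-subgroups $\bar H_i$ are complements to $\Ker(\bar r) \cap \bar r^{-1}(\bar\Pi)$. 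A Schur--Zassenhaus type argument then produces $[\beta] \in \Out(\calf_0)$ conjugating $\bar H_2$ to $\bar H_1$: using that $|\bar\Pi|$ is prime to $p$ and analyzing the kernel via the subgroup $L := \{\gamma \in \Aut(\calf_0) : \gamma|_A = \Id_A\}$ (whose image lies in $\Ker(\bar r)$), one may lift $[\beta]$ and multiply by an element of $N$ to force $\beta|_A = \Id_A$. This yields the required $\beta$, whence $\calf_1 = \9\beta\calf_2$ by Lemma~\ref{F_0<|F_i}.

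For the final assertion, assume $H^1(\Out_{\calf_0}(A); Z(A)) = 0$; the goal is to choose $\beta$ in $N \le H_2$, forcing $\beta H_2 \beta^{-1} = H_2 = H_1$ and so $\calf_1 = \calf_2$. Any element $\gamma \in L$ restricts to $\Id_A$, so is encoded by the map $s \mapsto \gamma(s) s^{-1} \in A$, a $1$-cocycle for the natural $S/A$-action on $A$; modifications of $\gamma$ by elements of $N \cap L$ correspond to coboundaries coming from conjugation by elements of $A$. Since $A$ is abelian — so $\Inn(A) = 1$, $Z(A) = A$, and $\Out_{\calf_0}(A) = \Aut_{\calf_0}(A)$ — the obstruction to pushing $\beta$ into $N$ becomes a class in $H^1(\Out_{\calf_0}(A); Z(A))$, which vanishes by assumption, so $\beta$ may indeed be chosen in $N$. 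The main obstacle is the Schur--Zassenhaus step: rigorously establishing that $\bar H_1, \bar H_2$ are conjugate complements in $\bar r^{-1}(\bar\Pi)$, via a careful analysis of $\Ker(\bar r)$ and its interaction with the $p'$-structure, and then pinning down the cohomological obstruction precisely as $H^1(\Out_{\calf_0}(A); Z(A))$.
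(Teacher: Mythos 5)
Your first half is essentially the paper's argument transported into $\Out(\calf_0)$: the paper runs the same Schur--Zassenhaus comparison directly inside $\Aut(S)$, setting $H=\{\gamma\in\Aut(S)\,|\,\gamma|_A=\Id_A\}$, proving $\Aut_{\calf_1}(S)H=\Aut_{\calf_2}(S)H$ from the hypothesis $\Aut_{\calf_1}(A)=\Aut_{\calf_2}(A)$ together with the extension axiom, and then conjugating the two complements $\Aut_{\calf_i}(S)/\Inn(S)$ inside $G/\Inn(S)$. But the step you yourself flag as the ``main obstacle'' is exactly where the one essential input is missing from your plan: to apply the conjugacy part of Schur--Zassenhaus you must know that $\Ker(\bar r)\cap\bar r^{-1}(\bar\Pi)$ has order prime to $|\bar\Pi|$, and this comes from the fact, nowhere invoked by you, that $A$ is \emph{centric}: if $\gamma|_A=\Id_A$ then for each $s\in S$ one has $c_s|_A=c_{\gamma(s)}|_A$, so $\gamma(s)s^{-1}\in C_S(A)\le A$, hence $\gamma$ also induces the identity on $S/A$ and $L$ is a $p$-group. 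Combined with the extension axiom (any $\gamma\in\Aut(\calf_0)$ normalizing $A$ with $\gamma|_A\in\Aut_{\calf_0}(A)$ lies in $\Aut_{\calf_0}(S)\cdot L$), this identifies your kernel with the image of the $p$-group $L$ and makes the Schur--Zassenhaus step legitimate; without it the coprimality is unproved. (Also, $\bar r$ is only defined on the classes of automorphisms of $\calf_0$ that normalize $A$ -- an arbitrary element of $\Aut(\calf_0)$ need not -- but all subgroups you use do normalize $A$, so this is a cosmetic repair.)

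The second half has a genuine gap. First, you assume $A$ is abelian; the lemma does not (that is why its statement is phrased with $Z(A)$ and $\Out_{O^{p'}(\calf_1)}(A)$). Second, and more seriously, the obstruction you construct -- the cocycle $s\mapsto\gamma(s)s^{-1}$ for the $S/A$-action -- is a class in $H^1(S/A;Z(A))$, not in $H^1(\Out_{\calf_0}(A);Z(A))$. Since $Z(A)$ is a $p$-group and $S/A\cong\Out_S(A)$ is only a Sylow $p$-subgroup of $\Out_{\calf_0}(A)$, the restriction map $H^1(\Out_{\calf_0}(A);Z(A))\to H^1(\Out_S(A);Z(A))$ is injective but in general not surjective, so the hypothesis does not kill your class; indeed your cocycle computation never uses that $\beta$ preserves $\calf_0$, and for a mere automorphism of $S$ fixing $A$ the conclusion is false in general. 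The paper bridges this by the model theorem: $N_{\calf_0}(A)$ is constrained, so it has a model $G$ with $A\nsg G$, $C_G(A)=Z(A)$ and $G/A\cong\Out_{\calf_0}(A)$; since $\beta\in\Aut(\calf_0)$ and $\beta|_A=\Id_A$, it extends by the uniqueness of models to $\5\beta\in\Aut(G)$ inducing the identity on $A$ and on $G/A$, so the obstruction genuinely lies in $H^1(\Out_{\calf_0}(A);Z(A))$; its vanishing makes $\5\beta$ conjugation by an element of $Z(A)$, hence $\beta\in\Inn(S)\le\Aut(\calf_2)$ and $\calf_1=\9\beta\calf_2=\calf_2$. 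Some such use of $\beta\in\Aut(\calf_0)$ (the model theorem or a substitute) is indispensable, and it is absent from your plan.
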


\begin{proof} Set $\calf_0=O^{p'}(\calf_1)=O^{p'}(\calf_2)$, 
and consider the subgroup 
	\[ H = \bigl\{\beta\in\Aut(S) \,\big|\,\beta|_A=\Id\bigr\} . 
	\]
Each $\beta\in H$ induces the identity on $S/A$ since $C_S(A)\le A$, 
so $H$ is a $p$-group by, e.g., \cite[Corollary 
5.3.3]{Gorenstein}. Each $\alpha\in\Aut_{\calf_i}(S)$ (for $i=1,2$) 
normalizes $A$ since $A$ is weakly closed, and hence also normalizes 
$H$. For each $\alpha\in\Aut_{\calf_1}(S)$, 
$\alpha|_A\in\Aut_{\calf_1}(A)=\Aut_{\calf_2}(A)$ and normalizes 
$\Aut_S(A)$, and hence extends to some $\alpha'\in\Aut_{\calf_2}(S)$ by the 
extension axiom (Definition \ref{d:sfs}(b)). Then $\alpha'\in\alpha H$, 
and this together with a similar argument in the opposite direction shows 
that $\Aut_{\calf_1}(S)H=\Aut_{\calf_2}(S)H$. Set 
$G=\Aut_{\calf_1}(S)H=\Aut_{\calf_2}(S)H$.

For $i=1,2$, $\Inn(S)H\nsg\Aut_{\calf_i}(S)H$ since 
$\Inn(S)\nsg\Aut_{\calf_i}(S)$ and both normalize $H$. Also, 
$\Aut_{\calf_i}(S)H/\Inn(S)H\cong\Aut_{\calf_i}(S)/\Inn(S)$ has 
order prime to $p$ by the Sylow axiom (Definition \ref{d:sfs}(b) again). So 
by the Schur-Zassenhaus theorem (see \cite[Theorem 6.2.1(i)]{Gorenstein}), 
all splittings of the extension
	\[ 1 \too \Inn(S)H/\Inn(S) \Right4{} G/\Inn(S) \Right4{} G/\Inn(S)H 
	\too 1 \]
are conjugate. Since $\Aut_{\calf_1}(S)/\Inn(S)$ and 
$\Aut_{\calf_2}(S)/\Inn(S)$ are two such splittings (since 
$\Aut_{\calf_i}(S)\cap H\le\Inn(S)$ by the Sylow axiom), there is 
$\beta\in H$ such that $\9\beta(\Aut_{\calf_2}(S))=\Aut_{\calf_1}(S)$. 
Thus $\beta|_A=\Id_A$ and 
	\[ \Aut_{\calf_1}(S) = \9\beta(\Aut_{\calf_2}(S)) = 
	\Aut_{\9\beta\calf_2}(S), \]
and so $\calf_1=\9\beta\calf_2$ by Lemma \ref{F_0<|F_i} (and since 
$O^{p'}(\calf_1)=\calf_0=O^{p'}(\calf_2)$).

Now assume that $H^1(\Out_{\calf_0}(A);Z(A))=0$, and let $G$ be a model for 
$N_{\calf_0}(A)$ (see \cite[Theorem I.4.9]{AKO}). Thus $A\nsg G$, 
$S\in\sylp{G}$, $O_{p'}(G)=1$, and $\calf_S(G)=N_{\calf_0}(A)$. Then 
$C_G(A)\le A$, so $G/A\cong\Out_{\calf_0}(A)$. Since 
$\beta\in\Aut(\calf_0)\le\Aut(S)$ and $\beta|_A=\Id_A$, $\beta$ is also an 
automorphism of $N_{\calf_0}(A)$ and extends to an automorphism $\5\beta\in 
\Aut(G)$ by the uniqueness condition in the model theorem \cite[Lemma 
II.4.3(a)]{AKO}. Also, $\5\beta$ induces the identity on $G/A$ since 
$\5\beta|_A=\Id$ and $C_G(A)\le A$, and $\5\beta$ is conjugation by some 
$a\in Z(A)$ since $H^1(\Out_{\calf_0}(A);Z(A))=0$ (see, e.g., \cite[Lemma 
1.2]{OV2}). But then $\beta\in\Inn(S)\le\Aut(\calf_2)$, and so 
$\calf_1=\9\beta\calf_2=\calf_2$. 
\end{proof}

The last lemma implies as a special case that when $G$ is a finite 
group of Lie type in characteristic different from $p$, adding diagonal 
automorphisms of order prime to $p$ doesn't change its $p$-fusion system.

\begin{Lem} \label{diag-aut} \fr3
Let $p$ be a prime, and let $G\nsg\5G$ be finite groups such that 
$p\nmid|O^p(\5G/G)|$. Choose $\5S\in\sylp{\5G}$, set $S=\5S\cap 
G\in\sylp{G}$, and set $\calf=\calf_S(G)$ and $\5\calf=\calf_{\5S}(\5G)$. 
Assume $A\nsg S$ is $\calf$-centric and weakly closed in $\calf$, and is 
such that $\5G=GC_{\5G}(A)$. Then $\calf$ is normal of $p$-power index in 
$\5\calf$, and $\calf=\5\calf$ if $p\nmid|\5G/G|$. 
\end{Lem}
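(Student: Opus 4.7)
The plan is to construct an intermediate subgroup $G\le H\le\5G$ such that $\calf$ is the unique saturated fusion subsystem of $p$-power index over $S$ in $\calf_{\5S}(H)$, then to show $\calf_{\5S}(H)=\5\calf$.

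First, using $\5G=GC_{\5G}(A)$ and weak closedness of $A$ in $\calf$, I show $A$ is weakly closed in $\5\calf$: if $\5g=gc\in\5G$ with $g\in G$ and $c\in C_{\5G}(A)$ satisfies $\9{\5g}A\le\5S$, then $\9{\5g}A=\9gA\le\5S\cap G=S$, and weak closedness in $\calf$ gives $\9gA=A$. Thus $A\nsg\5S$ and $\5S\in\sylp{N_{\5G}(A)}$. Setting $C=C_{\5G}(A)$, $C_0=C_G(A)$, and $T=\5S\cap C$, I deduce $T\in\sylp{C}$ (since $C\nsg N_{\5G}(A)$), and $T\cap S=C_S(A)=Z(A)$ via the $\calf$-centric hypothesis. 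Using $\5G/G\cong C/C_0$, the image of $T$ in $\5G/G$ is the Sylow $p$-subgroup, which also equals the image of $\5S$, so $|T/Z(A)|=|\5S/S|$, and a counting argument yields $\5S=ST$.

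Set $H=GT\le\5G$, a subgroup since $T$ normalizes $G\nsg\5G$. Then $\5S\le H$, $H/G\cong T/Z(A)$ is a $p$-group, and $\5S\in\sylp{H}$. By \cite[Theorem I.7.4]{AKO}, $\calf=\calf_S(G)$ is the unique saturated fusion subsystem over $S$ of $p$-power index in $\calf_{\5S}(H)$, and is normal therein. If $\calf_{\5S}(H)=\5\calf$, the first claim follows; moreover, if $p\nmid|\5G/G|$ then the Sylow $p$-subgroup of $\5G/G$ is trivial, so $T=Z(A)\le S$, giving $\5S=S$ and $H=G$, whence $\calf=\calf_{\5S}(H)=\5\calf$.

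The main obstacle is thus to prove $\calf_{\5S}(H)=\5\calf$. Note that $[\5G:H]$ equals the $p'$-part of $|\5G/G|$ and so is coprime to $p$. By Alperin's fusion theorem (Theorem \ref{AFT}) applied to $\5\calf$, this reduces to showing $N_{\5G}(Q)=N_H(Q)\cdot C_{\5G}(Q)$ for each $Q\in\5\calf^{rc}$ fully normalized in $\5\calf$; equivalently, every $p'$-element $y\in N_{\5G}(Q)$ lies in $N_G(Q)\cdot C_{\5G}(Q)$. By the hypothesis $p\nmid|O^p(\5G/G)|$, the image of $y$ in $\5G/G$ belongs to the $p'$-subgroup $O^p(\5G/G)$; letting $K$ be its preimage, the decomposition $\5G=GC$ gives $K=G\cdot(K\cap C)$, so we may write $y=gc'$ with $g\in G$ and $c'\in K\cap C$ chosen of $p'$-order modulo $G$. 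The identity $c'Qc'^{-1}=g^{-1}Qg$ together with weak closedness of $A$ in $\5\calf$ and the $\5\calf$-centric-radical nature of $Q$ should force $c'\in G\cdot C_{\5G}(Q)$; this absorption step is the main technical challenge. Once established, $y\in N_G(Q)\cdot C_{\5G}(Q)\le N_H(Q)\cdot C_{\5G}(Q)$, completing the argument.
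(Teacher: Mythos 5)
Your preliminary constructions are all correct, but they are the easy part: $A$ is indeed weakly closed in $\5\calf$, $T=\5S\cap C_{\5G}(A)$ is a Sylow $p$-subgroup of $C_{\5G}(A)$ with $\5S=ST$, the subgroup $H=GT$ contains $\5S\in\sylp{H}$ with $H/G$ a $p$-group, and the reduction (via Theorem \ref{AFT} and a Sylow argument) to showing that every $p'$-element of $N_{\5G}(Q)$ lies in $N_G(Q)\cdot C_{\5G}(Q)$ for $Q\in\5\calf^{rc}$ fully normalized is sound. But the proposal stops exactly where the content of the lemma begins. Nothing in your argument forces $c'\in G\cdot C_{\5G}(Q)$: the hypotheses give control over the normalizer of the single subgroup $A$ (namely $N_{\5G}(A)=N_G(A)C_{\5G}(A)$, which follows at once from $\5G=GC_{\5G}(A)$), but for an arbitrary centric radical $Q$ there is no direct group-theoretic link between $C_{\5G}(A)$ and $C_{\5G}(Q)$, and weak closure of $A$ by itself does not supply one. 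Indeed, by your own reductions the ``absorption'' statement for all such $Q$ is essentially a restatement of the equality $\calf_{\5S}(H)=\5\calf$ that you are trying to prove, so asserting that it ``should'' follow is circular as written. This is a genuine gap, and it is the whole point of the lemma, not a routine verification.

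For comparison, the paper bridges this gap by arranging to compare automizers at only one subgroup, namely $A$ itself. It sets $\4G=G\cdot O^p(\5G)$ (the preimage of $O^p(\5G/G)$ --- your subgroup $K$), so that $\calf_S(\4G)$ is normal of $p$-power index in $\5\calf$, while $\calf\le\calf_S(\4G)$ is a subsystem of index prime to $p$ over the same group $S$. Since $A$ is $\calf$-centric and weakly closed, Lemma \ref{theta-F-A}(c) --- which rests on the theory of subsystems of index prime to $p$ (Proposition \ref{p:thetaF}, i.e.\ the $\theta_\calf$-machinery) --- reduces the equality $\calf=\calf_S(\4G)$ to the single condition $\Aut_G(A)=\Aut_{\4G}(A)$, and that is immediate from $\5G=GC_{\5G}(A)$ because $N_{\4G}(A)=N_G(A)C_{\4G}(A)$. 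Some input of this type (control of fusion on all centrics by the automizer of one weakly closed centric subgroup) is exactly what your absorption step would have to reprove from scratch; without it, the argument is incomplete.
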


\begin{proof} Set $\4G=G\cdot O^p(\5G)$. Then $S\in\sylp{\4G}$ since 
$p\nmid|\4G/G|$, and $\calf_S(\4G)$ is normal of $p$-power index in 
$\5\calf$. It remains to show that $\calf=\calf_S(\4G)$. Since $\calf$ is 
normal of index prime to $p$ in $\calf_S(\4G)$, it suffices by Lemma 
\ref{theta-F-A}(c) to show that $\autf(A)=\Aut_{\4G}(A)$, and this holds 
since our original assumption $\5G=GC_{\5G}(A)$ implies that 
$N_{\4G}(A)=N_G(A)C_{\4G}(A)$. 
\end{proof}


\section{Simplicity of fusion systems of known simple groups}
\label{s:simplicity}

We are now ready to explicitly describe $O^{p'}(\calf)$ and 
$\quotfus\calf$, when $\calf$ is the fusion system of a known simple 
group and $S\nnsg\calf$. 

When $r$ is a prime, let $\Lie(r)$ denote the class of finite groups 
of Lie type in defining characteristic $r$. Since this will be used only 
when restricting attention to simple groups, we needn't go into details 
as to exactly which groups are included in this class.

In most cases, the simplicity of $\calf$ was determined by Aschbacher in 
\cite[Chapter 16]{A-gfit}. His conclusions are summarized in the following 
proposition.

\begin{Prop} \label{t:tame.simple}
Fix a prime $p$, a known simple group $G$, and $S\in\sylp{G}$. Set 
$\calf=\calf_S(G)$, and assume that $S\nnsg\calf$. 
\begin{enuma} 

\item If $G\cong A_n$, then $O^{p'}(\calf)=\calf$ if $n\equiv0,1$ (mod 
$p$), and $|\quotfus\calf|=2$ otherwise. In all cases, $O^{p'}(\calf)$ is 
simple, and is realized by $A_{n'}$, where $n'=p\cdot[\frac{n}p]$. 

\item If $G\in\Lie(p)$, or if $p=2$ and $G\cong\lie2F4(2)'$, then $G$ has Lie 
rank at least two, $O^{p'}(\calf)=\calf$, and $\calf$ is simple.

\item If $G$ is a sporadic simple group, then either 
$\calf$ is simple, or $O^{p'}(\calf)$ has index $2$ in $\calf$ and is 
simple, and is realized by a known simple group $H$. See Table 
\ref{tbl:sporlist} for details in the individual cases.

\end{enuma}
\end{Prop}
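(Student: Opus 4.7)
\medskip

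\noindent\textbf{Proof proposal.} The statement is essentially a compilation, so the plan is to split into the three cases and in each one either invoke Aschbacher's results from \cite[Chapter 16]{A-gfit} directly or redo the argument with the tools developed in Sections \ref{s:Op'F} and \ref{s:w.cl.}, paying particular attention in (c) to the two sporadic cases where Aschbacher's conclusions need correction.

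For (a), I would first note that $S\nnsg\calf$ forces $n\ge p$, and then set $n'=p\lfloor n/p\rfloor$. The key structural observation is that $S\in\sylp{A_n}$ sits inside $A_{n'}\times\{n'+1,\dots,n\}^{\mathrm{fix}}$ in the usual embedding, so $S\in\sylp{A_{n'}}$ as well, and $\calf_S(A_{n'})\le\calf_S(A_n)=\calf$. To see that this inclusion is an equality when $n\equiv0,1\pmod p$, one observes that in those cases only even permutations on $\{1,\dots,n'\}$ (combined with the identity on the fixed letters) normalize $S$, and thus $\Aut_\calf(S)=\Aut_{A_{n'}}(S)$; then Lemma \ref{theta-F-A}(c) applied to a suitable weakly closed abelian $A\nsg S$ (the obvious ``product of cycles'' subgroup) gives $O^{p'}(\calf)=\calf$. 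When $n\not\equiv0,1\pmod p$ there are $\ge2$ fixed letters, so swapping two of them gives an odd permutation on the fixed part that can be paired with an odd permutation on $\{1,\dots,n'\}$ inducing an automorphism of $S$ not realized in $A_{n'}$; this produces a $\Z/2$ quotient, and Lemma \ref{F_0<|F_i} together with $\Out(\Sigma_{n'}/A_{n'})$-accounting identifies $O^{p'}(\calf)$ with $\calf_S(A_{n'})$. Simplicity of $\calf_S(A_{n'})$ is then routine from Lemma \ref{l:no.str.cl.} once one checks the absence of nontrivial proper strongly closed subgroups, a fact that can be read off from the cycle decomposition of $S$.

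For (b), the Lie rank-one groups in $\Lie(p)$ have $S\nsg\calf$ (the unipotent radical of the unique Borel), so rank $\ge2$ is automatic; the exception $\lie2F4(2)'$ is included with the convention that it behaves like a rank-two group at $p=2$. Once rank $\ge2$, $N_G(S)=B$ acts on $S$ through the diagonal torus $T$, whose action on each root subgroup $U_\alpha\le S$ is realized as restriction of an element of $O^{p'}(\Aut_\calf(P_\alpha))$ for $P_\alpha$ the standard rank-one parabolic-radical containing $U_\alpha$; a Chevalley commutator computation shows that $\Aut_T(S)$ is generated by such restrictions, so $\autf^0(S)=\autf(S)$ and $O^{p'}(\calf)=\calf$. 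Simplicity follows from the BN-pair structure (no proper nontrivial strongly closed subgroup, by Borel--Tits / Aschbacher) together with Lemma \ref{l:no.str.cl.}. This is the content of Aschbacher's treatment, which I would cite rather than rewrite.

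For (c), the plan is a case-by-case traversal of the sporadic simple groups with $p\mid|G|$ and $S\nnsg\calf$, following \cite[Chapter 16]{A-gfit} and summarized in the table. For each group one computes $\outf(S)$ (using the known $p$-local structure tabulated in the ATLAS or in Aschbacher's analysis), identifies its subgroup generated by restrictions of elements of $O^{p'}(\autf(P))$ for $P\in\calf^{cr}$ using Proposition \ref{p:pi1(Fcr)}, and reads off $\quotfus\calf$. When $\quotfus\calf\ne1$ one locates a known simple subgroup $H\le G$ whose fusion system realizes $O^{p'}(\calf)$; typically $H$ is a characteristic index-$2$ overgroup's derived subgroup or the image of a $p'$-central extension, and the identification is made via Lemma \ref{Op'F1=Op'F2} after checking that $H$ and $O^{p'}(\calf)$ have the same automizer on some centric, weakly closed $A\nsg S$. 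The main obstacle, and the place where care is required, is the two cases in which Aschbacher's conclusion must be corrected: there I would compute $\thet\calf{A}$ from Lemma \ref{theta-F-A} directly, using the explicit $p$-local description of $G$ to pin down $\Aut_{O^{p'}(\calf)}(A)$, and then use Lemma \ref{p:p'-index} to match the corrected index to the asserted realization.
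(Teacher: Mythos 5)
Your sketches for (a) and (b) re-derive statements that the paper simply cites (\cite[16.5]{A-gfit} or \cite[Proposition 4.9]{AOV1} for the alternating groups; \cite[15.6, 16.3]{A-gfit} for $\Lie(p)$), and they are broadly in the right spirit; note, though, that in (a) the decisive point when $n\not\equiv0,1$ (mod $p$) --- that the extra automorphism of $S$ you construct does \emph{not} lie in $\autf^0(S)$, so that the index is exactly $2$, and that $\calf_S(A_{n'})$ really contains $O^{p'}_*(\calf)$ --- is asserted rather than argued, and this is precisely where the work lies if one does not cite.

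The genuine gap is in (c). Beyond citation, the entire content of the paper's proof of (c) is to identify and correct the two cases where Aschbacher's conclusions fail, namely $G\cong\He$ at $p=3$ (where $\calf$ is isomorphic to the $3$-fusion system of $M_{24}$, hence of $\Aut(M_{12})$, so $O^{3'}(\calf)$ has index $2$ and is realized by $M_{12}$) and $G\cong\Co_1$ at $p=5$ (where $\calf$ is in fact simple, the error coming from overlooked $\calf$-radical subgroups of order $5^2$), together with a repair of the argument for $\Fi_{24}'$ at $p=3$. Your plan neither identifies these cases nor states the corrected conclusions, and the tools you propose for them do not apply as stated: Lemmas \ref{theta-F-A} and \ref{p:p'-index} require an $\calf$-centric subgroup $A\nsg S$ that is weakly closed in $\calf$ (abelian for Lemma \ref{p:p'-index}), but for $\He$ at $p=3$ one has $S\cong3^{1+2}_+$, whose maximal abelian subgroups have order $9$ and are moved around by $\autf(S)$, so no such $A$ is available (the center is weakly closed but not centric). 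The paper settles these cases by appealing to classifications of saturated fusion systems over the relevant small $p$-groups: the tables of \cite{RV} for extraspecial $3^{1+2}_+$ in the $\He$ case, and \cite[Theorem 2.8 and Table 2.2]{indp3} for $\Co_1$ at $p=5$. Without that input, or an equivalent complete determination of the centric radical subgroups and their automizers (for instance via Proposition \ref{p:pi1(Fcr)}), a traversal of the sporadic groups ``following \cite[Chapter 16]{A-gfit}'' would simply reproduce the two errors the proposition is meant to correct.
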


\renewcommand{\ab}{\textup{ab}}
\newcommand{\es}[1]{\fbox{$#1$}}
\newcommand{\simp}{\textup{simple}}
\newcommand{\asimp}{}
\newcommand{\Gs}{S\nsg\calf}

\begin{table}[ht] 
\renewcommand{\arraystretch}{1.2} \setlength{\arraycolsep}{3mm}
\[ \begin{array}{c|cccccc}
~G~ & ~p=2~ & ~p=3~ & ~p=5~ & ~p=7~ & ~p=11~ & ~p=13~ \\\hline
M_{11} & \simp & \ab & \ab &  & \ab &  \\ 
M_{12} & \simp & \es{\simp} & \ab &  & \ab &  \\ 
M_{22} & \simp & \ab & \ab & \ab & \ab &  \\ 
M_{23} & \simp & \ab & \ab & \ab & \ab &  \\ 
M_{24} & \simp & \es{\asimp M_{12}:2} & \ab & \ab & \ab &  \\ 
J_1 & \ab & \ab & \ab & \ab & \ab &  \\ 
J_2 & \simp & \es{\Gs} & \ab & \ab &  &  \\ 
J_3 & \simp & \Gs & \ab &  &  &  \\ 
J_4 & \simp & \es{\asimp \lie2F4(2)':2} & \ab & \ab & \es{\Gs} &  \\ 
\Co_3 & \simp & \simp & \es{\Gs} & \ab & \ab &  \\ 
\Co_2 & \simp & \simp & \es{\Gs} & \ab & \ab &  \\ 
\Co_1 & \simp & \simp & \simp & \ab & \ab &  \\ 
\HS & \simp & \ab & \es{\Gs} & \ab & \ab &  \\ 
\McL & \simp & \simp & \es{\Gs} & \ab & \ab &  \\ 
\Suz & \simp & \simp & \ab & \ab & \ab &  \\ 
\He & \simp & \es{\asimp M_{12}:2} & \ab & \es{\simp} &  &  \\ 
\Ly & \simp & \simp & \simp & \ab & \ab &  \\ 
\Ru & \simp & \es{\asimp \lie2F4(2)':2} & \es{\asimp \SL_3(5):2} & \ab &  & \ab \\ 
\ON & \simp & \ab & \ab & \es{\simp} & \ab &  \\ 
\Fi_{22} & \simp & \simp & \ab & \ab & \ab & \ab \\ 
\Fi_{23} & \simp & \simp & \ab & \ab & \ab & \ab \\ 
\Fi_{24}' & \simp & \simp & \ab & \es{\simp} & \ab & \ab \\ 
F_5 & \simp & \simp & \simp & \ab & \ab &  \\ 
F_3 & \simp & \simp & \es{\simp} & \ab &  & \ab \\ 
F_2 & \simp & \simp & \simp & \ab & \ab & \ab \\ 
F_1 & \simp & \simp & \simp & \simp & \ab & \es{\simp} 
\end{array} \]
\caption{\small{In all cases, $G$ is a sporadic simple group, 
$S\in\sylp{G}$, and $\calf=\calf_S(G)$. An entry ``$H:2$'' means that 
$O^{p'}(\calf)$ is simple, has index $2$ in $\calf$, and is realized by 
the simple group $H$. A blank entry means that $p\nmid|G|$, and ``\ab'' 
means that $S$ is abelian. A box indicates that $S$ is extraspecial of 
order $p^3$ and exponent $p$.}}
\label{tbl:sporlist}
\end{table}

\begin{proof} \noindent\textbf{(a) } See \cite[16.5]{A-gfit} or 
\cite[Proposition 4.9]{AOV1}. 

\smallskip

\noindent\textbf{(b) } Assume $G\in\Lie(p)$, or $p=2$ and 
$G\cong\lie2F4(2)'$. Then $G$ has Lie rank at least 2 by \cite[Theorem 
15.6.b]{A-gfit}, and so $\calf$ is simple by \cite[16.3]{A-gfit}. 

\smallskip

\noindent\textbf{(c) } Assume $G$ is sporadic. If $p=2$, then 
$G\not\cong J_1$ since $S\nnsg\calf$, and hence $\calf$ is simple in all 
cases by \cite[16.8]{A-gfit}. 
If $p$ is odd, then since $S\nnsg\calf$, $G$ is not 
``$p$-Goldschmidt'' in the terminology of \cite{A-gfit}, and hence is not 
one of the groups listed in \cite[Theorem 15.6]{A-gfit}. By 
\cite[16.10]{A-gfit}, either $\calf$ is simple, or $(G,p)$ is one of the 
pairs listed there (and marked as such in Table \ref{tbl:sporlist}), and 
$\calf$ is realized by the almost simple group given in the same reference. 
In particular, when $\calf$ is not simple, $O^{p'}(\calf)$ is realized by 
one of the simple groups $M_{12}$ or $\lie2F4(2)'$ (when $p=3$) or 
$\SL_3(5)$ (when $p=5$), and hence is simple by 
\cite[16.10]{A-gfit} again or by (b), respectively. 

Note, however, the following two errors in the statement of \cite[Theorem 
15.6]{A-gfit}. When $p=3$ and $G\cong\He$, $\calf$ is isomorphic to the 
fusion system of $M_{24}$ and hence that of $\Aut(M_{12})$: this follows 
from the tables on pp. 47--48 of \cite{RV}. The error in the proof in 
\cite{A-gfit} came from mis-identifying $D=N_G(S)$ (see line 3 on p. 102 in 
\cite{A-gfit}). 

When $p=5$ and $G\cong\Co_1$, $\calf$ is simple by Theorem 2.8 and Table 
2.2 in \cite{indp3}. In this case, the error in the proof of 
\cite[16.9.5]{A-gfit} occurred because the $\calf$-radical subgroups of 
order $5^2$ were overlooked. 

In addition, when $p=3$ and $G\cong\Fi_{24}'$, argument (ii) on p. 101 in 
\cite{A-gfit} cannot be used to prove that $O^{3'}(\calf)=\calf$, since 
there is no maximal subgroup $H\le G$ such that $O_3(H)=F^*(H)$, 
$H=O^{3'}(H)$, and $H\ge N_G(S)$. Instead, argument (iii) (on the same 
page) can be applied. 
\end{proof}

It remains to consider the cases where $G\in\Lie(q)$ for $q\ne p$, using 
Lemmas \ref{l:Op'F=F:1}, and \ref{l:Op'F=F:2}, and \ref{p:p'-index}. We 
first look at the case $p=2$. 

\begin{Prop} \label{p:2'-index}
Let $G$ be a finite simple group of Lie type in odd characteristic, fix 
$S\in\syl2G$, and set $\calf=\calf_S(G)$. Assume $S\nnsg\calf$. Then 
$O^{2'}(\calf)=\calf$, and $\calf$ is simple.
\end{Prop}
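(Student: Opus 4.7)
The plan is to prove the two statements in sequence, deducing simplicity of $\calf$ from the first claim. For $O^{2'}(\calf)=\calf$, I would apply Lemma~\ref{theta-F-A}(b): it suffices to produce a normal subgroup $A\nsg S$ that is $\calf$-centric, weakly closed in $\calf$, and satisfies $\autf(A)=O^{2'}(\autf(A))$. Since $G\in\Lie(q)$ for some odd $q$, the natural candidate is $A=T_2$, the Sylow $2$-subgroup of a maximally split (or Coxeter) torus $T\le G$. In the typical configuration $A$ is abelian, self-centralizing in $S$ (hence $\calf$-centric), and is the unique $G$-conjugate of itself lying in $S$ (hence weakly closed in $\calf$). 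The automorphism group $\autf(A)=N_G(A)/C_G(A)$ is then a subquotient of the Weyl group $W$, and since $W$ is generated by reflections (involutions), $W=O^{2'}(W)$; so the hypothesis of Lemma~\ref{theta-F-A}(b) is satisfied.

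For the simplicity of $\calf$, I would apply Lemma~\ref{l:no.str.cl.}. The standard fact that for any finite group $H$ and any $T\le S\in\sylp{H}$ strongly closed in $\calf_S(H)$ one has $T=\gen{T^H}\cap S$ implies, since $G$ is simple, that no proper nontrivial subgroup of $S$ is strongly closed in $\calf$. Once $O^{2'}(\calf)=\calf$ is known, the exceptional alternative in Lemma~\ref{l:no.str.cl.}---namely $S=T_1\times\cdots\times T_k$ with the $T_i$ strongly closed in $O^{2'}(\calf)=\calf$ and $\autf(S)$-conjugate---is ruled out for $k\ge2$; hence $O^{2'}(\calf)=\calf$ is itself simple.

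The principal difficulty lies in the first step, in handling low-rank and small-characteristic cases where the generic choice $A=T_2$ fails one of the three required properties. For instance, for $\PSL_2(q)$ with dihedral Sylow $2$-subgroup of order at least $8$, one would take $A$ to be the cyclic subgroup of index $2$ and verify directly that $\autf(A)=C_2=O^{2'}(C_2)$. In exceptional configurations where the $2$-part of the torus is not $\calf$-centric, or where $N_G(A)/C_G(A)$ receives contributions beyond the Weyl group, the subgroup $A$ must be replaced by a different abelian normal subgroup of $S$, or $\autf(S)$ must be analyzed directly via $N_G(S)/C_G(S)$. Carrying this out uniformly across all simple Lie-type groups in odd characteristic, while checking all three required properties of $A$ in each case, constitutes the technical heart of the argument.
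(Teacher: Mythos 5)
Your overall strategy (find $A\nsg S$ abelian, $\calf$-centric and weakly closed with $\autf(A)$ generated by reflections, apply Lemma \ref{theta-F-A}(b), then get simplicity from Lemma \ref{l:no.str.cl.}) is the same as the paper's, but two of your supporting claims are wrong as stated and the cases you defer contain the one genuinely hard exception. First, the ``standard fact'' you invoke for the absence of strongly closed subgroups is false: strong closure of $T$ in $\calf_S(H)$ gives only $T\subseteq\gen{T^H}\cap S$, not equality. If your fact were true, no simple group could have a proper nontrivial strongly closed $p$-subgroup, but $\Omega_1(S)=Z(S)$ is strongly closed in $\Sz(2^n)$ and in $\PSU_3(2^n)$ at $p=2$. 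The statement you need is a theorem of Flores and Foote (\cite[Theorem 1.1]{FF}), which the paper cites precisely because it does not follow formally from simplicity of $G$. Second, your deduction that $\autf(A)=O^{2'}(\autf(A))$ because $\autf(A)$ is a \emph{subquotient} of the Weyl group $W$ and $W$ is generated by reflections is a non sequitur: subquotients of reflection groups need not be generated by involutions (e.g.\ $C_3\le\Sigma_3$). What is actually needed is that $\Aut_G(A)$ itself is generated by reflections, and arranging this uniformly is exactly why the paper first reduces, via the cross-characteristic equivalences of \cite[Proposition 6.2]{BMO2}, to Chevalley and twisted forms over $q\equiv1$ (mod $4$), where \cite[Lemma 5.3]{BMO2} supplies such an $A$ and \cite[Proposition 5.13]{BMO2} gives weak closure.

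More seriously, the ``technical heart'' you leave open is not just bookkeeping: there is a real exception, $G\cong\PSp_{2n}(q)$ with $n\ge2$ and $q\equiv5$ (mod $8$), where the torus subgroup $A$ is \emph{not} weakly closed, so Lemma \ref{theta-F-A}(b) cannot be applied to it, and neither of your fallback suggestions (another abelian normal subgroup, or analyzing $\autf(S)$ directly) is what works. The paper's treatment of this case passes to $\Sp_{2n}(q)$, uses the \emph{nonabelian} weakly closed subgroup $\til{B}\cong(Q_8)^n$, the centralizer subsystem $C_{\til\calf}(\til{W})$ realized by $\Sp_4(q)\times\Sp_{2n-4}(q)$, the known simplicity of the $2$-fusion system of $\PSp_4(q)$ (\cite[Proposition 5.1]{O-rk4}), and then Lemma \ref{l:Op'F=F:2} together with Lemma \ref{mod-Z(F)} to conclude $\quotfus[2]{\calf}=1$. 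So your proposal is a correct outline of the generic argument but, as written, rests on one false lemma, one invalid inference, and omits the case that requires a different mechanism.
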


\begin{proof} We will prove that $O^{p'}(\calf)=\calf$. Once we 
know this, then since no proper nontrivial subgroup of $S$ is strongly 
closed with respect to $G$ by \cite[Theorem 1.1]{FF}, it follows by 
Lemma \ref{l:no.str.cl.} that $\calf$ is simple.

By \cite[Proposition 6.2]{BMO2}, we can assume that $G$ 
satisfies Case (III.1) of \cite[Hypotheses 5.1]{BMO2}. Hence $G\cong\gg(q)$ 
for $q\equiv1$ (mod $4$) (i.e., a Chevalley group), or $G\cong{}^2\gg(q)$ 
for $\gg=A_n$, $D_n$, or $E_6$ and $q\equiv1$ (mod $4$). So by \cite[Lemma 
5.3]{BMO2}, there is a centric abelian subgroup $A\nsg S$ such that 
$\Aut_G(A)$ is generated by reflections; in particular, such that 
$O^{2'}(\Aut_G(A))=\Aut_G(A)$. (These results in \cite{BMO2} are stated 
for the groups of universal type, but they carry over easily to the adjoint 
case.) By \cite[Proposition 5.13]{BMO2}, in all cases except when 
$G\cong\Sp_{2n}(q)$ for some $n\ge2$ and $q\equiv5$ (mod $8$), $A$ is 
weakly closed in $\calf$, and hence $\quotfus\calf=1$ and 
$O^{2'}(\calf)=\calf$ by Lemma \ref{theta-F-A}(b). 

Now assume that $G\cong\PSp_{2n}(q)$, where $n\ge2$ and $q\equiv5$ (mod 
$8$), and set $\til{G}=\Sp_{2n}(q)$. More generally, we use tildes for 
subgroups and elements of $\til{G}$, and the same symbol without tilde for 
their images in $G$. Thus $\til{S}\in\syl2{\til{G}}$ and also 
$\til\calf=\calf_{\til{S}}(\til{G})$. Let $(V,\bb)$ be the symplectic space 
on which $\til{G}$ acts as the group of isometries, choose an 
orthogonal decomposition $V=V_1\perp\cdots\perp V_n$ where 
$\dim_{\F_q}(V_i)=2$ for each $i$, and let $\til{z}_i\in\til{G}$ act via 
$-\Id$ on $V_i$ and the identity on the other summands. Set 
$\til{Z}=\gen{\til{z}_1,\dots,\til{z}_n}$: then $C_{\til{G}}(\til{Z})$ is a 
product of $n$ copies of $\Sp_2(q)\cong\SL_2(q)$ and 
$N_{\til{G}}(\til{Z})\cong\Sp_2(q)\wr\Sigma_n$. We can assume that 
$S\in\syl2{G}$ was chosen so that $\til{S}\le N_{\til{G}}(\til{Z})$ (hence 
so that $\til{Z}\le\til{S}$). 
 
Set $\til{B}=C_{\til{S}}(\til{Z})\cong(Q_8)^n$. Each 
$g\in\til{z}_1^{\til\calf}$ acts on $V$ with 2-dimensional $(-1)$-eigenspace, 
hence centralizes the $z_i$ or permutes them via a 2-cycle, and no element 
that permutes them via a 2-cycle can be a square in $\til{S}$. So 
each $\varphi\in\Hom_{\til\calf}(\til{B},\til{S})$ sends $\til{Z}$ to 
itself, and thus sends $\til{B}=C_{\til{S}}(\til{Z})$ to itself. This 
proves that $\til{B}$ is weakly closed in $\til\calf$.

Set $\til{W}=\gen{\til{z}_1\til{z}_2}$. We can assume $\til{S}$ (or the ordering 
of the $\til{z}_i$) was chosen so that $\til{W}$ is fully centralized in 
$\til\calf$. Set $\til\cale=C_{\til\calf}(\til{W})$: the fusion system of 
$C_{\til{G}}(\til{W})\cong\Sp_4(q)\times\Sp_{2n-4}(q)$. By Lemma 
\ref{l:F1xF2}, $\til\cale=\til\cale_1\times\til\cale_2$
and $O^{2'}(\til\cale)=O^{2'}(\til\cale_1)\times O^{2'}(\til\cale_2)$, where 
$\til\cale_1$ and $\til\cale_2$ are the fusion systems of $\Sp_4(q)$ and 
$\Sp_{2n-4}(q)$, respectively. 
The fusion system of $\PSp_4(q)$ is simple by \cite[Proposition 
5.1]{O-rk4}, so $O^{2'}(\til\cale_1)=\til\cale_1$ by Lemma \ref{mod-Z(F)}, and 
hence $\Aut_{O^{2'}(\til\cale)}(\til{B})$ contains $(C_3\wr\Sigma_2)\times1$ 
as a subgroup of $\Aut_{\til\calf}(\til{B})\cong C_3\wr\Sigma_n$. Thus 
$\Aut_{O^{2'}(\til\cale)}(\til{B})$ and $O^{2'}(\Aut_{\til\calf}(\til{B}))$ 
generate $\Aut_{\til\calf}(\til{B})$, so $\quotfus{\til\calf}=1$ by Lemma 
\ref{l:Op'F=F:2}, and hence $\quotfus\calf=1$ by Lemma \ref{mod-Z(F)} again.
\end{proof}

When handling the cases where $p$ is odd, the following list of 
isomorphisms between fusion systems of groups of Lie type will be helpful. 
It is convenient to write $G\sim_pH$ to mean that $G$ and $H$ are finite 
groups whose $p$-fusion systems are isomorphic.

For each prime $p$ and each $q$ prime to $p$, $\ord_p(q)$ denotes 
the multiplicative order of $q$ modulo $p$. 

\begin{Lem} \label{equiv-simp-1}
Fix an odd prime $p$ and a prime power $q$ such that $p\nmid q$. Then 
each of the following inclusions of finite groups 
induces an isomorphism of $p$-fusion systems: 
\begin{enuma} 

\item $\GL_n(q) \le \GL_{n+1}(q)$ when $n\ge2$ and 
$\ord_p(q)\nmid(n+1)$;

\item $\Sp_{2n}(q) \le \GL_{2n}(q)$ (all $n\ge1$) when $\ord_p(q)$ is even;

\item $\SO_{2n+1}(q) \le \GL_{2n+1}(q)$ (all $n\ge1$) when 
$\ord_p(q)$ is even; 

\item $\GO_{2n}^\gee(q) \le \GL_{2n}(q)$ when $\gee=\pm1$, $n\ge1$, 
$\ord_p(q)$ is even, and $q^n\not\equiv-\gee$ (mod $p$); and 

\item $\SO_{2n-1}(q) \le \SO_{2n}^\gee(q)$ when $\gee=\pm1$, $n\ge2$, 
and $q^n\not\equiv\gee$ (mod $p$).

\end{enuma}
\end{Lem}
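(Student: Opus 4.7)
The plan is uniform across (a)--(e). Given the inclusion $H\le G$, write $\calf=\calf_S(H)$ and $\calf'=\calf_S(G)$; the aim is to prove $\calf=\calf'$, which reduces to three tasks: Sylow equality, the existence of a weakly closed centric abelian subgroup common to both systems, and equality of its automizers in $H$ and $G$.

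First I would check that $|H|_p=|G|_p$, so any $S\in\sylp{H}$ is also in $\sylp{G}$ and $\calf\le\calf'$ is an inclusion of saturated fusion systems over the same $p$-group. Each part reduces to a cyclotomic calculation: in (a), $|G|/|H|$ is $q^n(q^{n+1}-1)$, which has trivial $p$-part iff $\ord_p(q)\nmid(n+1)$; in (b), (c), $|G|/|H|$ is, up to $p$-prime units, a product of factors $q^j-1$ with $j$ odd, whose $p$-part vanishes iff $\ord_p(q)$ is even; (d) and (e) are similar, the stated congruence ensuring the missing cyclotomic factor has trivial $p$-part.

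Second, set $d=\ord_p(q)$ and take $A$ to be the $p$-power torsion of a maximal torus $T\le H$ of ``anisotropic type'', namely a product of copies of $\F_{q^d}^\times$ realized inside the defining representation in the form appropriate to each case. Then $A\nsg S$, $C_S(A)=A$, and $A$ carries the full $p$-part of $|H|=|G|$; this gives centricity in both $\calf$ and $\calf'$. Weak closure of $A$ in $\calf'$ reduces to the statement that $A$ is the unique homocyclic abelian $p$-subgroup of $S$ of its rank and exponent, which is where the congruence hypotheses of each part come in. Third, I would check that $\Aut_G(A)=\Aut_H(A)$ by identifying both with the Weyl group $N_H(T)/C_H(T)$ of $T$, and noting that any extra $G$-symmetry of $A$ not coming from $H$ must centralize $A$ (by a direct inspection: in (a) the missing $\GL_1(q)$ factor commutes with $A$ and has order prime to $p$, and in (b)--(e) the additional Weyl symmetries available in $G$ already preserve the bilinear or quadratic form defining $H$ once restricted to $A$).

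Granted these three properties, equality $\calf=\calf'$ follows from Alperin's fusion theorem (Theorem~\ref{AFT}) together with the observation that, since $A$ is centric weakly closed in $\calf'$ with $\Aut_{\calf'}(A)=\Aut_{\calf}(A)$, the sets $\calf^{cr}$ and $(\calf')^{cr}$ and the $\calf'$-automorphism groups of their members are recovered from $\Aut_{\calf'}(A)$ and hence already realized in $\calf$. Alternatively, after observing $O^{p'}(\calf)\le O^{p'}(\calf')$ via Lemma~\ref{l:Op'F=F:1} applied to the centralizer of a suitable central element of $A$, one can invoke Lemma~\ref{Op'F1=Op'F2} directly, the obstruction $H^1(\Out_{\calf}(A);Z(A))$ vanishing because $\Out_{\calf}(A)$ is a subgroup of a Weyl group of order prime to $p$. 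The main obstacle is step two: verifying weak closure in $\calf'$ case by case, since this is exactly where the numerical hypotheses $\ord_p(q)\nmid n+1$, $\ord_p(q)$ even, or $q^n\not\equiv\pm\gee\pmod p$ play an essential role, ruling out ``competing'' homocyclic subgroups that would appear in $G$ but not in $H$.
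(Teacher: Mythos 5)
Your first step (the $v_p$ computations showing the subgroup has index prime to $p$, so the two groups share a Sylow $p$-subgroup) is exactly half of the paper's proof. But the rest of your argument has a genuine gap: matching the automizer of the single subgroup $A$ does not yield equality of the two fusion systems. Writing $\calf=\calf_S(H)\le\calf'=\calf_S(G)$, the statement to be proved is that every $G$-conjugation between subgroups of $S$ is already realized in $H$; extra $G$-fusion could live on nonabelian $\calf'$-centric $\calf'$-radical subgroups and would be invisible to $\Aut_G(A)=\Aut_H(A)$. The criterion you implicitly use (``$\cale=\calf$ iff $\Aut_\cale(A)=\autf(A)$'', Lemma \ref{theta-F-A}(c)) is only valid for subsystems $\cale\le\calf'$ \emph{of index prime to $p$}, i.e.\ satisfying $\Aut_\cale(P)\ge O^{p'}(\Aut_{\calf'}(P))$ for all $P$ --- and that containment for $\calf\le\calf'$ is precisely the control-of-fusion statement you are trying to prove, not something you have established. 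Your alternative route has the same circularity: Lemma \ref{Op'F1=Op'F2} requires $O^{p'}(\calf)=O^{p'}(\calf')$ as a hypothesis, whereas Lemma \ref{l:Op'F=F:1} only gives the one containment $O^{p'}(\calf)\le O^{p'}(\calf')$; the reverse containment is again equivalent to the assertion being proved. (A secondary issue: your weak-closure claim via uniqueness of a homocyclic subgroup of given rank and exponent is asserted rather than proved, and the congruence hypotheses in (a)--(e) are really there to make the Sylow orders match, not to force weak closure.)

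The paper closes exactly this gap by quoting \cite[Lemma A.2]{BMO1}, which shows directly that for each of the inclusions (a)--(e) the $p$-fusion system of the subgroup is a \emph{full} subcategory of that of the larger group; once that is known, the only thing left to check is that the index is prime to $p$, which is your cyclotomic computation. So to repair your argument you would either need to import that fullness result (at which point your steps two and three become unnecessary), or replace them by an honest proof that $H$ controls $p$-fusion in $G$ --- for instance by analyzing centralizers $C_G(P)$ for $P\le S$ and showing they are generated by $C_H(P)$ together with subgroups of order prime to $p$, which is essentially what the argument behind \cite[Lemma A.2]{BMO1} does and is substantially more work than comparing $\Aut_G(A)$ with $\Aut_H(A)$.
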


\begin{proof} By \cite[Lemma A.2]{BMO1}, for each of the inclusions 
(a,b,c,d,e), the $p$-fusion system of the subgroup is a full subcategory of 
the $p$-fusion system of the larger group. So it remains only to check that 
each subgroup has index prime to $p$ in the larger group, and this holds 
since by the well known formulas for the orders of these groups, 
	\begin{align*} 
	v_p\bigl(|\GL_{n+1}(q):\GL_{n}(q)|\bigr) &= v_p(q^{n+1}-1) \\ 
	v_p\bigl(|\GL_{2n}(q):\Sp_{2n}(q)|\bigr) &= 
	\textstyle\sum_{i=1}^nv_p(q^{2i-1}-1) \\ 
	v_p\bigl(|\GL_{2n+1}(q):\SO_{2n+1}(q)|\bigr) &= 
	\textstyle\sum_{i=1}^{n+1}v_p(q^{2i-1}-1) \\ 
	v_p\bigl(|\GL_{2n}(q):\GO_{2n}^\gee(q)|\bigr) &= 
	v_p(q^n+\gee)\cdot\textstyle\sum_{i=1}^{n}v_p(q^{2i-1}-1) \\
	v_p\bigl(|\SO_{2n}^\gee(q):\SO_{2n-1}(q)|\bigr) &= v_p(q^n-\gee) 
	\end{align*} 
(see, e.g., \cite[pp. 19, 70, 141]{Taylor}). 
\end{proof}

\begin{Lem} \label{equiv-simp-2}
Fix an odd prime $p$ and a prime power $q$ such that $p\nmid q$. Then 
there exists a prime power $q^\vee$ such that $\ord_p(q^\vee)=\ord_p(-q)$ 
and $v_p((q^\vee)^{p-1}-1)=v_p(q^{p-1}-1)$. For each such prime power 
$q^\vee$:
\begin{enuma} 

\item $\PSU_n(q)\sim_p \PSL_n(q^\vee)$ for all $n$;
\item $\varOmega_{2n}^\gee(q) \sim_p \varOmega_{2n}^{(-1)^n\gee}(q^\vee)$ for all 
$\gee\in\{\pm1\}$ and all $n$; 
\item $E_6(q) \sim_p \lie2E6(q^\vee)\sim_p F_4(q^\vee)$; and 
\item $\gg(q) \sim_p \gg(q^\vee)$ when $\gg=\varOmega_{2n+1}$ or $\Sp_{2n}$ 
for $n\ge2$, or $\gg=G_2$, $F_4$, $E_7$, or $E_8$. 

\end{enuma}
\end{Lem}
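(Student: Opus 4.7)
The existence of $q^\vee$ follows from Dirichlet's theorem on primes in arithmetic progressions. Set $N=v_p(q^{p-1}-1)+1$; since $\gcd(-q,p)=1$, there is a prime $q^\vee$ with $q^\vee\equiv -q\pmod{p^N}$. Then $(q^\vee)^{p-1}\equiv(-q)^{p-1}=q^{p-1}\pmod{p^N}$, which forces both $v_p((q^\vee)^{p-1}-1)=v_p(q^{p-1}-1)$ and $\ord_p(q^\vee)=\ord_p(-q)$. For the isomorphism statements, the plan is to invoke the general principle of Broto--M\o{}ller--Oliver \cite{BMO1,BMO2}: for a fixed $\Z$-reductive group scheme $\gg$ equipped with a Steinberg endomorphism (possibly incorporating a graph automorphism), the $p$-fusion system of $\gg(q)$ for $p\nmid q$ depends on $q$ only through the action of the Steinberg endomorphism on the $p$-power torsion of a maximally split maximal torus $T$, up to conjugation by the Weyl group $W$. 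Hence two prime powers $q,q'$ with $\ord_p(q)=\ord_p(q')$ and $v_p(q^{p-1}-1)=v_p((q')^{p-1}-1)$ yield isomorphic $p$-fusion systems for each fixed type (with fixed twist).

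Case (d) is then direct: the Weyl groups of $B_n$, $C_n$, $G_2$, $F_4$, $E_7$, $E_8$ each contain the long element $w_0=-\Id$ on $X^*(T)$, so conjugation by $w_0$ identifies the action of $q\cdot\Id$ with that of $(-q)\cdot\Id$ on the torus; thus $\gg(q)\sim_p\gg(-q)\sim_p\gg(q^\vee)$, the last equivalence by the general principle. For (a), the graph automorphism $\gamma$ of $A_{n-1}$ acts on $X^*(T)$ as $-w_0$, so the Steinberg endomorphism of $\PSU_n(q)$ acts on $T$ as $-q$ times a Weyl element, giving $\PSU_n(q)\sim_p\PSL_n(-q)\sim_p\PSL_n(q^\vee)$. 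Case (b) is the same analysis for $D_n$: since $-\Id\in W(D_n)$ if and only if $n$ is even, the sign rule $\gee\mapsto(-1)^n\gee$ emerges from whether the factor $-1$ can be absorbed in $W$ or must be paid for by toggling the diagram twist. The first equivalence in (c) is formally analogous to (a) via the graph automorphism of $E_6$; for the second, $\lie2E6(q^\vee)\sim_p F_4(q^\vee)$, use that $F_4$ arises as the graph-fixed subgroup of $E_6$, so the twisted Steinberg endomorphism of $\lie2E6$ at $q^\vee$ restricts on the torus of $F_4$ to the standard $q^\vee$-Frobenius, producing the same $p$-local structure.

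The main obstacle will be the last identification in (c). Unlike the other cases, the groups involved have different Lie rank, and $|\lie2E6(q^\vee)|/|F_4(q^\vee)|$ contributes extra factors $((q^\vee)^9+1)((q^\vee)^5+1)$ (times a power of $q^\vee$), which must be coprime to $p$ for the Sylow $p$-subgroups to even be isomorphic. Verifying this under the hypotheses in force wherever the lemma is invoked (which restricts $\ord_p(q^\vee)$), and then matching centralizer data at each $p$-radical subgroup between $\lie2E6$ and $F_4$, is the technical heart of the argument. The remaining cases (a), (b), and (d) preserve Lie rank and follow routinely once the BMO framework is in hand.
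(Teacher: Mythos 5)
Your construction of $q^\vee$ is correct, and in fact slightly more direct than the paper's: the paper applies Dirichlet to a residue class $x$ chosen to have order $p\cdot\ord_p(-q)$ in $(\Z/p^{\ell+1})^\times$ with $\ell=v_p(q^{p-1}-1)$, whereas taking $q^\vee\equiv-q\pmod{p^{\ell+1}}$ as you do gives both conditions immediately. For the group-theoretic equivalences the paper simply cites \cite[Theorem A(c,d)]{BMO1}, and your Weyl-group sketch ($-\Id\in W$ for $B_n$, $C_n$, $G_2$, $F_4$, $E_7$, $E_8$; the graph automorphism acting as $-w_0$ for $A_{n-1}$ and $E_6$; $-\Id\in W(D_n)$ exactly for $n$ even) is essentially a re-derivation of what that theorem encapsulates, so parts (a), (b), (d) and the first equivalence in (c) are handled at the same citation level as in the paper.

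The genuine gap is the second equivalence in (c), $\lie2E6(q^\vee)\sim_p F_4(q^\vee)$: you do not prove it, but rather declare it ``the technical heart'' and outline a plan (check that the extra factors are prime to $p$, then match centralizer data at $p$-radical subgroups) without carrying it out. The paper disposes of this step with one further citation, \cite[Example 4.4(b)]{BMO1}, which is precisely the statement your ``$F_4$ as fixed points of the graph automorphism'' remark is groping toward. Your instinct that a hypothesis is needed here is sound: since $|\lie2E6(q^\vee)|/|F_4(q^\vee)|=(q^\vee)^{12}((q^\vee)^9+1)((q^\vee)^5+1)$ up to a factor $\gcd(3,q^\vee+1)$, the $p$-parts of the two orders agree only when $\ord_p(q^\vee)\notin\{2,6,10,18\}$; in particular for $q\equiv1\pmod p$ (so $q^\vee\equiv-1\pmod p$) the $\Phi_2$-multiplicities $6$ and $4$ make the Sylow $p$-subgroups have different orders, so this equivalence genuinely requires such a restriction (and in the paper's applications it is only ever invoked with $q^\vee\equiv1\pmod p$). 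But a blind proof of the lemma cannot appeal, as you do, to ``the hypotheses in force wherever the lemma is invoked'': as submitted, your argument neither supplies the missing equivalence (by citation or by an actual comparison of the two fusion systems) nor pins down the conditions under which it holds, so this step of the statement remains unproven.
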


\begin{proof} Set $\ell=v_p(q^{p-1}-1)$. Since 
$(\Z/p^{\ell+1})^\times$ is cyclic, there is $x\in\Z$ whose class modulo 
$p^{\ell+1}$ has order $p\cdot\ord_p(-q)$. By Dirichlet's theorem on primes 
in arithmetic sequences (see \cite[p. 74, Corollary]{Serre}) and since 
$\gcd(x,p)=1$, there are infinitely many primes $q^\vee$ such that 
$q^\vee\equiv x$ (mod $p^{\ell+1}$). For each such $q^\vee$, we have 
$\ord_p(q^\vee)=\ord_p(-q)$ and $v_p((q^\vee)^{p-1}-1)=v_p(q^{p-1}-1)$.

The above relations between groups are all special cases of \cite[Theorem 
A(c,d)]{BMO1}, together with \cite[Example 4.4(b)]{BMO1} (for the second 
equivalence in (c)). Note also the remark in \cite[p. 11]{BMO1} that 
explains why the above conditions on $q^\vee$ and $-q$ hold if and only if 
they generate the same closed subgroup of $\Z_p^\times$ (the $p$-adic 
units).
\end{proof}

We now look at the classical groups when $p$ is odd. We use the following, 
standard notation to describe certain automizers. For a given prime $p$, 
$k\mid m\mid(p-1)$ and $n\ge1$, and a commutative ring $R$ whose group of 
$m$-th roots of unity is cyclic of order $m$, define $G(m,k,n)\le\GL_n(R)$ as 
follows: 
	\[ G(m,k,n) = \bigl\{ \diag(u_1,\dots,u_n) \in\GL_n(R) 
	\,\big|\, \textup{$u_i^m=1$ $\forall\,i$, $(u_1\cdots u_n)^{m/k}=1$} 
	\bigr\} \cdot \Perm(n) \]
where $\Perm(n)\cong\Sigma_n$ is the group of permutation matrices: the 
matrices that permute the canonical basis. Thus, for example, 
$G(m,1,n)\cong C_m\wr\Sigma_n$, and 
$G(m,m,n)\cong(C_m)^{n-1}\rtimes\Sigma_n$. This notation 
will be used in particular when $R=\Z/p^\ell$ (some $\ell\ge1$) and $m\mid(p-1)$.

\begin{Prop} \label{p:p'-ind-1}
Fix an odd prime $p$, and let $G$ be a finite simple linear, unitary, 
symplectic, or orthogonal group in 
defining characteristic different from $p$. Fix $S\in\sylp{G}$, assume $S$ 
is nonabelian, and set $\calf=\calf_S(G)$. Then $O^{p'}(\calf)$ is simple, 
and $\quotfus\calf\cong\autf(S)/\Aut_{O^{p'}(\calf)}(S)$ is as follows:
\begin{enuma} 

\item If $G\cong\PSL^\gee_n(q)$ for some $\gee\in\{\pm1\}$ and $n\ge2$, 
then $\quotfus\calf$ is cyclic of order $\ord_p(\gee q)$. 

\item If $G\cong\PSp_{2n}(q)$ or $\varOmega_{2n+1}(q)$, then 
$\quotfus\calf$ is cyclic of order $\lcm(2,\ord_p(q))$.

\item If $G\cong\POmega_{2n}^\gee(q)$ where $\gee=\pm1$ and $q^n\not\equiv\gee$ 
(mod $p$), then $\quotfus\calf$ is cyclic of order 
$\lcm(2,\ord_p(q))$.

\item If $G\cong\POmega_{2n}^\gee(q)$ where 
$\gee=\pm1$ and $q^n\equiv\gee$ (mod $p$), then 
$\quotfus\calf$ is cyclic of order $\frac12\cdot\lcm(2,\ord_p(q))$. 
If, in addition, $\ord_p(q)$ is even, then $\calf$ is isomorphic to a 
normal subsystem of index $2$ in the $p$-fusion system of $\SL_n(q)$.

\end{enuma}
In each of the above cases, there is an abelian subgroup $A\nsg S$ that is 
$\calf$-centric and weakly closed in $\calf$. More precisely, if we define 
$m$, $\mu$, $\theta$, and $\kappa$ as in Table \ref{tbl:m-mu-theta} and set 
$\ell=v_p(q^\mu-\theta)$, then $A$, $\autf(A)$, and 
$\Aut_{O^{p'}(\calf)}(A)$ are as described in Table \ref{tbl:m-mu-theta}.
	\begin{table}[ht]
	\[ \renewcommand{\arraystretch}{1.3} \setlength{\arraycolsep}{3mm}
	\begin{array}{c|ccccccc}
	\textup{case} & m & \mu & \theta & \kappa & \Aut_\calf(A) & 
	\Aut_{O^{p'}(\calf)}(A) \\\hline
	\textup{(a)} & \ord_p(\gee q) & m & \gee & [n/\mu] & 
	C_{\mu}\wr\Sigma_\kappa & G(\mu,\mu,\kappa) \\
	\textup{(b)} & \ord_p(q) & \lcm(2,m) 
	& (-1)^{m+1} & [2n/\mu] 
	& C_{\mu}\wr\Sigma_\kappa & G(\mu,\mu,\kappa) \\
	\textup{(c)} & \ord_p(q) & \lcm(2,m) 
	& (-1)^{m+1} & [2(n-1)/\mu] 
	& C_{\mu}\wr\Sigma_\kappa & G(\mu,\mu,\kappa) \\
	\textup{(d)} & \ord_p(q) & 
	\lcm(2,m) & (-1)^{m+1} & [2n/\mu] & 
	G(\mu,2,\kappa) & G(\mu,\mu,\kappa) 
	\end{array} \]
	\caption{In all cases, $\kappa\ge p$ and $A$ has exponent 
	$p^\ell$. In all cases except (a) when $p\mid(q-\gee)$, 
	$A\cong(C_{p^\ell})^\kappa$.} 
	\label{tbl:m-mu-theta}
	\end{table}
\end{Prop}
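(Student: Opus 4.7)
The plan is to pass to a universal-type covering $\til G$ of $G$, fix a maximal torus $T \le \til G$ whose $p$-part $A$ has maximal $p$-rank (namely, $\kappa$), and take $A$ to be the image of $T_{(p)}$ in $S$. In case (a), $\mu = \ord_p(\gee q)$ controls the smallest-dimensional $\F_q$-subspace on which $p$-roots of unity of $\til G$ act faithfully, and $A \cong (C_{p^\ell})^\kappa$ arises from the torus $\prod_{i=1}^{\kappa}(\F_{q^\mu}^\times)_{(p)}$ (with a determinant restriction when $p \mid q-\gee$). Similar constructions govern the symplectic and orthogonal cases, with $\mu = \lcm(2,\ord_p(q))$ to accommodate the bilinear form. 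Standard facts about tori in groups of Lie type in cross characteristic imply that $A$ is self-centralizing (hence $\calf$-centric) and is the set of all $p$-elements of $C_G(A)$, which yields weak closure of $A$ in $\calf$: any $\calf$-conjugate of $A$ lying in $S$ is again a maximal torus $p$-part of the same type, contained in $C_S(A) = A$. The automizer $\autf(A) = N_G(A)/C_G(A)$ is then computed as the relative Weyl group acting on the character lattice; this is $C_\mu \wr \Sigma_\kappa$ in cases (a)-(c), while in case (d) the orthogonal determinant one restriction to $\varOmega$ cuts it down to $G(\mu,2,\kappa)$.

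Next I establish the two bounds on $|\quotfus\calf|$. For the upper bound, I apply Lemma \ref{l:Op'F=F:2} with $\cale = C_\calf(Z)$ for a suitably chosen $Z \le A \cap Z(S)$ whose centralizer in $G$ decomposes as a product of smaller classical groups (e.g., splitting one coordinate off the direct sum). By Lemma \ref{l:F1xF2} one gets $O^{p'}(\cale)$ as a product of the $O^{p'}$ of smaller-rank fusion systems; inducting on $\kappa$ (with the base case given by the isomorphisms from Lemmas \ref{equiv-simp-1} and \ref{equiv-simp-2} reducing unitary/orthogonal to linear) shows that $\Aut_{O^{p'}(\cale)}(A)$ contains $G(\mu,\mu,\kappa-1)\times C_\mu$ (or its parity-restricted analogue). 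Combined with $O^{p'}(\autf(A)) = G(\mu,\mu,\kappa)$, this yields $|\quotfus\calf| \le \mu$ in (a)-(c) and $\le \mu/2$ in (d). For the lower bound I apply Lemma \ref{p:p'-index}: choose $1 \ne Z \le X \cap Z(S)$ inside the weakly closed elements and observe that the cyclic image of the ``determinant'' character $\prod u_i \in C_\mu$ on $\autf(A)/K$ survives, because no centralizer $C_\calf(Z)$ contains a $p'$-automorphism that adjusts that global determinant. Matching the two bounds forces the stated cyclic orders.

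For simplicity of $O^{p'}(\calf)$, I apply Lemma \ref{l:no.str.cl.}. It is known (via Flores--Foote in the characteristic-$p$ analogue, or a direct check using the classical description of strongly closed $p$-subgroups of classical groups in cross characteristic) that no nontrivial proper subgroup of $S$ is strongly closed in $\calf$ when $S$ is nonabelian. Moreover, the $\autf(S)$-orbits on a hypothetical direct-factor decomposition of $S$ are incompatible with the wreath-product transitivity of the Weyl action on the $\kappa$ coordinates of $A$, ruling out the second alternative of Lemma \ref{l:no.str.cl.}. Hence $O^{p'}(\calf)$ is simple. The final assertion in case (d) that $\calf$ embeds as an index-$2$ subsystem of the fusion system of $\SL_n(q)$ (when $\ord_p(q)$ is even) follows by directly identifying $A$, $\autf(A)$, and $\Aut_{O^{p'}(\calf)}(A)$ with the corresponding data computed in case (a), and invoking Lemma \ref{Op'F1=Op'F2}.

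The main obstacle is the bookkeeping in case (d): when $q^n \equiv \gee \pmod p$, the ``center'' $(C_\mu)^{\kappa}/\Delta$ of the potential wreath product intersects the $\varOmega$-determinant kernel in a proper subgroup, and one must argue carefully, using the extension axiom and the reflection structure of the Weyl group of type $D_n$ versus $B_n$, that precisely a subgroup of index $2$ survives in $\autf(A)$. Getting the constant $\frac12 \cdot \lcm(2,\ord_p(q))$ exactly right, and matching it on both the upper-bound and lower-bound sides, is the delicate computation; all the other cases follow the same pattern but without this parity subtlety.
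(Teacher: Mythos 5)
Your overall architecture coincides with the paper's (a weakly closed abelian $A$ coming from a maximal torus; Lemma \ref{l:Op'F=F:2} applied to a centralizer subsystem $C_\calf(Z)$ for the upper bound and Lemma \ref{p:p'-index} for the lower bound; Lemma \ref{l:no.str.cl.} plus Flores--Foote \cite{FF} for simplicity), but your induction on $\kappa$ has no valid base case, and that is where the real content lies. Splitting off a coordinate reduces the determination of $\Aut_{O^{p'}(\calf)}(A)$ to the same question for a smaller group of the same kind, and this bottoms out exactly at the minimal nonabelian situation: $\PSL_n^\gee(q)$ with $p\mid(q-\gee)$ and $p\le n\le 2p-1$ (equivalently the $\GL_\kappa(q^\mu)$ arising as $C_G(Z)$ when $\mu>1$, with $|S/A|=p$). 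There the centralizer of any $1\ne Z\le A\cap Z(S)$ is again of the same minimal shape (or all of $G$), so your induction cannot start, and your appeal to Lemmas \ref{equiv-simp-1} and \ref{equiv-simp-2} only moves between types, it does not settle this case. The paper needs a genuinely different ingredient here, namely Lemma \ref{p:p-1}, which rests on the classification of fusion systems over $p$-groups with an abelian subgroup of index $p$ in \cite{indp2} (alternatively one can quote Ruiz \cite{Ruiz} for the linear case), to rule out $\Aut_{O^{p'}(\calf)}(A)$ being the alternating subgroup; and case (a.2), hence cases (b)--(d), feed on this base case, since the inclusion $\Aut_{O^{p'}(\calf)}(A)\supseteq G(\mu,\mu,\kappa)$ (the upper bound on $\quotfus\calf$) requires knowing that $O^{p'}$ of the centralizer subsystem is large.

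Two further concrete problems. First, your claimed containment $\Aut_{O^{p'}(\cale)}(A)\ge G(\mu,\mu,\kappa-1)\times C_\mu$ is false: elements of $C_G(Z)$ act trivially on the split-off coordinate of $A$ (the order-$\mu$ factor there is induced by field automorphisms, which do not centralize $Z$), so one only gets $1\times G(\mu,\mu,\kappa-1)$, or $\Sigma_\kappa$ for the paper's regular choice $z=z_1\cdots z_\kappa$; as stated, your containment together with Lemma \ref{l:Op'F=F:2} would force $\quotfus\calf=1$, contradicting the asserted answer $C_\mu$, and it is also incompatible with $\Aut_{O^{p'}(\calf)}(A)=G(\mu,\mu,\kappa)$. (Note too that when $p\mid\kappa$ no single coordinate is fixed by $\Aut_S(A)$, so ``splitting one coordinate off'' does not even produce $Z\le Z(S)$.) Second, your weak-closure argument is circular: there is no a priori reason an $\calf$-conjugate of $A$ inside $S$ should lie in $C_S(A)$; the paper proves weak closure by the fixed-space dimension count on the generators of $\Omega_1(A)$, or by citing \cite[Proposition 5.13]{BMO2}. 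Finally, in case (d) you have named but not carried out the computation that $\autf(A)=G(\mu,2,\kappa)$ for $\POmega_{2n}^\gee(q)$ --- the explicit quadratic form on $(\F_{q^\mu})^\kappa$ built from norm and trace, and the determinant (for $q$ odd) or Dickson-invariant (for $q$ even) analysis of the one-coordinate field automorphism via the normal basis theorem --- which is precisely what produces the factor $\frac12$ and, fed into Lemma \ref{p:p'-index} inside the orthogonal group, the matching lower bound.
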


\begin{proof} We first show that $O^{p'}(\calf)$ is simple, assuming the 
other claims hold. If $G=\PSL_n^\gee(q)$ where $\gee=\pm1$ and 
$p\mid(q-\gee)$, then no proper nontrivial subgroup of $S$ is strongly 
closed in $\calf$ by \cite[Theorem 1.2]{FF} and since $S$ is nonabelian. So 
once we know that $O^{p'}(\calf)=\calf$, then Lemma \ref{l:no.str.cl.} 
implies that $\calf$ is simple.

In all other cases, by Table \ref{tbl:m-mu-theta}, $\Omega_1(A)$ is a 
simple $\F_p\Aut_{O^{p'}(\calf)}(A)$-module and $|S/A|<|A|$. No proper 
nontrivial subgroup of $S$ is strongly closed in $\calf$ by \cite[Theorem 
1.2]{FF} again, so we are in the situation of Lemma~\ref{l:no.str.cl.}. 
Assume $O^{p'}(\calf)$ is not simple: then there are subgroups 
$T_1,\dots,T_k$ (some $k\ge 2$) that are strongly closed in $O^{p'}(\calf)$ 
and $\Aut_\calf(S)$-conjugate to each other, and such that 
$S=T_1\times\cdots\times T_k$. For each $i$, let $\pi_i\:S\too T_i$ be the 
projection. Then $\Ker(\pi_i)\cap A$ is normalized by 
$\Aut_{O^{p'}(\calf)}(A)$, and since $\Omega_1(A)$ is a simple module, 
either $\Ker(\pi_i)\cap A=1$ or $\Omega_1(A)\le\Ker(\pi_i)$. If 
$\Omega_1(A)\le\Ker(\pi_i)$ for one index $i$ then it holds for all $i$, 
which would imply $A=1$. Thus $\Ker(\pi_i)\cap A=1$ for each $i$, so 
$\pi_i$ sends $A$ injectively into $T_i$, and 
$|S|=\prod_{i=1}^k|T_i|\ge|A|^k\ge|A|^2$, contradicting the earlier 
observation that $|S/A|<|A|$. Thus $O^{p'}(\calf)$ is simple.

Note also that $\kappa\ge p$ in all cases, assuming the other claims 
hold: $p\mid|\autf(A)|$ since $A\nsg S$ and $S$ is nonabelian, and 
$\autf(A)$ has index $1$ or $2$ in $C_\mu\wr\Sigma_\kappa$ where 
$p\nmid\mu$.

\smallskip

\noindent\textbf{(a) } By Lemma \ref{equiv-simp-2}(a), it suffices to prove 
this when $G\cong\PSL_n(q)$: the case handled by Ruiz in \cite{Ruiz}. We 
sketch a slightly different argument, based on Lemmas \ref{l:Op'F=F:2} and 
\ref{p:p'-index}. Recall that $m=\mu=\ord_p(q)$.

\noindent\textbf{Case (a.1): } Assume $m=1$; i.e., $p\mid(q-1)$. Set 
$\4G=\SL_n(q)$, and let $A\le\4G$ be the subgroup of diagonal matrices of 
$p$-power order and determinant 1. Thus $A\cong(C_{p^\ell})^{n-1}$ 
(recall that $\ell=v_p(q-1)$). Choose $\4S\in\sylp{\4G}$ such that 
$A\le\4S\le N_{\4G}(A)$, and set $\4\calf=\calf_{\4S}(\4G)$. Then 
\cite[Hypotheses 5.1 case (III.1)]{BMO2} is satisfied, and so $A$ is weakly 
closed in $\4S$ by \cite[Proposition 5.13]{BMO2}. 

By Lemma \ref{mod-Z(F)}, $\quotfus\calf\cong\quotfus{\4\calf}$, and hence 
it suffices to show that $\quotfus{\4\calf}=1$. Set 
	\[ \Gamma= \Aut_{\4\calf}(A)\cong\Sigma_n \qquad\textup{and}\qquad 
	\Gamma_0 = \Aut_{O^{p'}(\4\calf)}(A). \] 
If $\quotfus{\4\calf}\ne1$, then $\Gamma_0<\Gamma$ is normal of index 
prime to $p$ by Lemma \ref{theta-F-A}(a). Also, $n\ge p$ since $S$ is 
nonabelian, and since $p$ is odd, this implies $\Gamma_0\cong A_n$ (the 
alternating group).


If $p\le n\le2p-1$, then $|S/A|=p$, and $|[A,S]|\ge p^2$. (Note that if 
$p=n=3$, then $\ell>1$ since $S\in\sylp{\PSL_n(q)}$ is assumed to be 
nonabelian.) If $\quotfus{\4\calf}\ne1$, then $\Gamma_0\cong A_p$, so that 
$|\Aut_{\Gamma_0}(U)|=(p-1)/2$ for $U\in\sylp{\Gamma_0}$. Then $A\nsg 
O^{p'}(\4\calf)$ by Lemma \ref{p:p-1}, and hence $A\nsg\4\calf$. But 
this is impossible: $A$ cannot be strongly closed since each element of 
$\4G$ of $p$-power order is diagonalizable. So $\quotfus{\4\calf}=1$.

Now assume $n\ge2p$. Let $A_0\le A$ be the subgroup of all diagonal 
matrices $\diag(a_1,\dots,a_n)$ of $p$-power order such that 
$a_1=a_2=\dots=a_{p+1}$. Thus $C_{\4G}(A_0)\cong\4G\cap(\GL_{p+1}(q)\times 
(\F_q^\times)^{n-p-1})$. Set 
$G_0=O^{p'}(O^p(C_{\4G}(A_0)))\cong\SL_{p+1}(q)$, so that $G_0\nsg 
C_{\4G}(A_0)$ and $C_{\4G}(A_0)/G_0\cong(\F_q^\times)^{n-p-1}$. 

Set $S_0=G_0\cap C_{\4S}(A_0)\in\sylp{G_0}$ and $\calf_0=\calf_{S_0}(G_0)$. 
By Lemma \ref{diag-aut}, $\calf_0$ is normal of $p$-power index in 
$C_{\4\calf}(A_0)$. So by Lemmas \ref{l:Op'Op<Op'} and \ref{l:Op'F=F:1}(b), 
	\[ O^{p'}(\calf_0)\le O^{p'}(C_{\4\calf}(A_0))\le O^{p'}(\4\calf). 
	\]
We already saw that $\Aut_{O^{p'}(\calf_0)}(A)\cong\Sigma_{p+1}$, so 
$\Aut_{\4\calf}(A)=O^{p'}(\Aut_{\4\calf}(A))\cdot\Aut_{O^{p'}(\calf_0)}(A)$, 
and hence $\quotfus{\4\calf}=1$ by Lemma \ref{l:Op'F=F:2}.

\noindent\textbf{Case (a.2): } Now assume $m>1$; i.e., that 
$p\nmid(q-1)$. Fix a vector space $V\cong(\F_q)^n$, and set 
$\til{G}=\Aut_{\F_q}(V)\cong\GL_n(q)$ and 
$\til{G}_0=O^{p'}(\til{G})\cong\SL_n(q)$. Note that $\calf=\calf_S(G)$ is 
isomorphic to the fusion system of $\til{G}_0$ since 
$p\nmid|Z(\til{G}_0)|$, 
and we will show later that it is isomorphic to that of $\til{G}$. 

Fix an irreducible polynomial $f\mid(X^p-1)$ of degree $m$ in $\F_q[X]$, and 
identify $\F_{q^m}\cong\F_q[X]/(f)$. Choose a decomposition 
$V=V_1\oplus\dots\oplus V_\kappa\oplus W$, where $\dim(V_i)=m$ for each $i$ 
and $\dim(W)=n-m\kappa<m$ (recall $\kappa=[n/m]$). Thus 
$p\nmid|\Aut_{\F_q}(W)|$. Set $\5V=V_1\oplus\dots\oplus V_\kappa$. Choose 
$z_i\in\Aut_{\F_q}(V_i)$ with characteristic polynomial $f$, and set 
$z=z_1\oplus\cdots\oplus z_\kappa\oplus\Id_W\in C_{\til{G}}(W)$. Thus $z$ 
determines an $\F_{q^m}$-vector space structure on $\5V$, and we set 
	\begin{align*} 
	H&=C_{\til{G}}(z)=\Aut_{\F_{q^m}}(\5V)\times\Aut_{\F_q}(W) \cong 
	\GL_\kappa(q^m)\times\GL_{n-m\kappa}(q) \\
	H_0&=O^{p'}(H)\cong\SL_\kappa(q^m)\rtimes C_{p^\ell}. 
	\end{align*} 

Now set 
	\[ A = O_p\bigl(\Aut_{\F_{q^m}}(V_1)\times\dots\times
	\Aut_{\F_{q^m}}(V_\kappa) \bigr) \cong 
	O_p\bigl((\F_{q^m}^\times)^\kappa\bigr) \cong 
	(C_{p^\ell})^\kappa. \] 
Thus $\Omega_1(A)=\gen{z_1,\dots,z_\kappa}$. Each element of 
$N_{\til{G}}(A)$ permutes the subgroups $\gen{z_i}$. So 
	\beqq N_{\til{G}}(A) \cong (\F_{q^m}^\times \rtimes C_m)\wr\Sigma_\kappa 
	\times \GL_{n-m\kappa}(q) 
	\quad\textup{and}\quad 
	C_{\til{G}}(A) \cong (\F_{q^m}^\times)^\kappa \times 
	\GL_{n-m\kappa}(q), \label{e:CG(A)} \eeqq
and hence 
	\[ \Aut_{\til{G}}(A)=\Aut_{\til{G}_0}(A)\cong C_m\wr\Sigma_\kappa 
	\quad\textup{and}\quad 
	\Aut_H(A)\cong \Sigma_\kappa. \]
 Choose $S_0\in\sylp{\Sigma_\kappa}$, and set $S=AS_0$. 
Then $S\in\sylp{N_{\til{G}}(A)}$, and from the formula for 
$|\til{G}|=|\GL_n(q)|$, we see that 
	\[ v_p(|\til{G}|) = \sum_{i=1}^nv_p(q^i-1) = \sum_{i=1}^\kappa 
	v_p(q^{mi}-1) = 
	\sum_{i=1}^\kappa\bigl(v_p(q^m-1)+v_p(i)\bigr) = 
	\kappa\ell+v_p(\kappa!) = v_p(|S|) \]
and hence that $S\in\sylp{\til{G}}=\sylp{\til{G}_0}$. We can thus identify 
$\calf=\calf_S(\til{G}_0)$. 

Set $X=\{t\in A\,|\,t^\calf\subseteq A\}$. For each $x\in S\sminus A$ of 
order $p$, $x$ acts on the set $\{V_i\}$ via a permutation of order $p$, 
and hence $\dim_{\F_q}(C_V(x))\ge m$. Since $C_V(z)=W$, where $\dim(W)<m$, 
this proves that $z\in X\cap Z(S)$. More generally, $\Omega_1(A)$ is 
generated by elements $a$ such that $C_V(a)=W$, so $\Omega_1(A)$ and 
$A=C_S(\Omega_1(A))$ are both weakly closed in $\calf$.

Consider the surjective homomorphism 
	\[ \thet\calf{A} \: \autf(A) = \Aut_{\til{G}}(A) \Onto5{} 
	\quotfus\calf \]
of Lemma \ref{theta-F-A}. We want to apply Lemma \ref{p:p'-index}(b) with 
$\gen{z}$ in the role of $Z$. Set $\cale=C_\calf(z)=\calf_S(H)$: the 
$p$-fusion system of $C_{\til{G}}(z)$ and hence of $\GL_\kappa(q^m)$. We 
showed in Case (a.1) that $O^{p'}(\cale)$ contains the fusion system of 
$\SL_\kappa(q^m)$, and hence that 
	\[ \Aut_{O^{p'}(\cale)}(A) = \Aut_\cale(A) 
	= \Aut_H(A)\cong\Sigma_\kappa. \]
So in the notation of Lemma \ref{p:p'-index}, $K_0=K\cong 
G(m,m,\kappa)$: the normal closure of $\Aut_\cale(A)$ in $\autf(A)\cong 
C_m\wr\Sigma_\kappa$. Hence $\Ker(\thet\calf{A})=K$ by that lemma, and so 
	\[ \quotfus\calf\cong \autf(A)/K\cong C_m. \]

Let $N\:\F_{q^m}^\times\too\F_q^\times$ be the norm map: $N(u)=u\cdot 
u^q\cdots u^{q^{m-1}}=u^{(q^m-1)/(q-1)}$ for $u\in\F_{q^m}^\times$. Thus 
$N$ is surjective. For $u\in\F_{q^m}^\times$, 
$\det_{\F_q}(\F_{q^m}\xto{~u\cdot~}\F_{q^m})=N(u)$ (see, e.g., 
\cite[Proposition VI.5.6]{Lang}). Together with the description of 
$C_{\til{G}}(A)$ in \eqref{e:CG(A)}, this shows that 
$\til{G}=\til{G}_0C_{\til{G}}(A)$. We just saw that $A$ is weakly closed in 
$\calf$ (hence in $\calf_0$), and Lemma \ref{diag-aut} now implies that 
$\calf_S(\til{G})=\calf_S(\til{G}_0)\cong\calf$.

\smallskip

\noindent\textbf{(b,c,d) } If $m=\ord_p(q)$ is odd, then by Lemma 
\ref{equiv-simp-2}(b,d), there is a prime power $q^\vee$ such that 
$\ord_p(q^\vee)=\ord_p(-q)=2m$, and for each $n$, 
$\Sp_{2n}(q)\sim_p\Sp_{2n}(q^\vee)$, 
$\varOmega_{2n+1}(q)\sim_p\varOmega_{2n+1}(q^\vee)$, and 
$\varOmega_{2n}^\gee(q)\sim_p\varOmega_{2n}^{(-1)^n\gee}(q^\vee)$ (for 
$\gee=\pm1$). Also, 
	\[ q^n\equiv\gee \pmod{p} ~\iff~ \textup{$\gee=1$ and $m\mid n$} 
	~\iff~ (q^\vee)^n\equiv(-1)^n=(-1)^n\gee \pmod{p}, \]
and hence the claims in (b), (c), and (d) and in Table \ref{tbl:m-mu-theta} all 
hold when $\ord_p(q)$ is odd if they hold when $\ord_p(q)$ is even.

\smallskip

\noindent\textbf{(b) } Assume $\ord_p(q)$ is even. If $G\cong\PSp_{2n}(q)$, 
then $G\sim_p\Sp_{2n}(q)\sim_p\GL_{2n}(q)\sim_p\SL_{2n}(q)$ by Lemmas 
\ref{equiv-simp-1}(b) and \ref{diag-aut}, and hence $\quotfus\calf$ is 
cyclic of order $\ord_p(q)$ by (a). If $G\cong\varOmega_{2n+1}(q)$, then 
$G\sim_p\SO_{2n+1}(q)\sim_p\GL_{2n+1}(q)\sim_p\SL_{2n+1}(q)$ by Lemmas 
\ref{equiv-simp-1}(c) and \ref{diag-aut}, and hence $\quotfus\calf$ is 
cyclic of order $\ord_p(q)$. In both cases, the claims in Table 
\ref{tbl:m-mu-theta} about $A$, $\autf(A)$, and $\Aut_{O^{p'}(\calf)}(A)$ 
all follow from (a). 

\smallskip

\noindent\textbf{(c) } Assume $G\cong\POmega_{2n}^\gee(q)$, where 
$\gee=\pm1$ and $q^n\not\equiv\gee$ (mod $p$). 
Then $G\sim_p\SO_{2n}^\gee(q)\sim_p\SO_{2n-1}(q)$ by Lemmas \ref{diag-aut} 
and \ref{equiv-simp-1}(e). So by (b), $\quotfus\calf$ is cyclic of order 
$\lcm(2,\ord_p(q))$, and the claims in Table \ref{tbl:m-mu-theta} about 
$A$, $\autf(A)$, and $\Aut_{O^{p'}(\calf)}(A)$ all hold. 

\smallskip

\noindent\textbf{(d) } Assume $G\cong\POmega_{2n}^\gee(q)$, where 
$\gee=\pm1$ and $q^n\equiv\gee$ (mod $p$), and where $m=\ord_p(q)$ is even. 
Set $\4G=\GO_{2n}^\gee(q)$ and let $\4\calf$ be the fusion system of $\4G$; 
thus $\4\calf$ contains $\calf$ as a normal subsystem of index $1$ or $2$. 
Then $\4G\sim_p \GL_{2n}(q)$ by Lemma \ref{equiv-simp-1}(d), and 
$\GL_{2n}(q)\sim_p \SL_{2n}(q)$ as shown in the proof of case (a.2). So 
$\quotfus{\4\calf}$ is cyclic of order $m$, and $\quotfus\calf$ is cyclic 
of order $m/2$ or $m$.

Recall that $\mu=\lcm(2,m)=m$ and $\kappa=[2n/\mu]$. Set 
$\lambda=m/2=\mu/2$, so that $\kappa=[n/\lambda]$. Since 
$q^n\equiv\gee=\pm1$ by assumption, $\lambda\mid n$, and so 
$n=\kappa\lambda$. Since $\ord_p(q)=m=2\lambda$, $p$ divides 
$q^m-1=(q^\lambda-1)(q^\lambda+1)$, but $p$ does not divide $q^\lambda-1$. 
Hence $p\mid(q^\lambda+1)$.

Set $V=(\F_{q^m})^\kappa$ as an $\F_q$-vector space. We want to describe 
$\4G$ as an orthogonal group on $V$, and must first find the right 
quadratic form on this space. Let $(x\mapsto\4x=x^{q^\lambda})$ be the 
field automorphism of order $2$ in $\F_{q^m}$, and consider the composite 
	\[ \qq_0 \: \F_{q^m} \Right3{N} \F_{q^\lambda} \Right3{\Tr} \F_q, 
	\quad (N(x)=x\4x=x^{q^\lambda+1},\quad 
	\Tr(x)=x+x^q+\dots+x^{q^{\lambda-1}}). \]
Thus $N$ and $\Tr$ are the norm and trace maps for the respective field 
extensions. Then $\qq_0$ is quadratic, has associated bilinear form 
	$\bb(x,y) = \Tr(x\4y+\4xy) = \Tr_{\F_{q^m}/\F_q}(x\4y)$, 
and hence is nondegenerate. Since $(x\mapsto ux)\in\GO(\F_{q^m},\qq_0)$ for 
all $u\in N^{-1}(1)$, the orthogonal group contains a cyclic subgroup of order 
$p\mid(q^\lambda+1)$. Hence $\GO(\F_{q^m},\qq_0)\cong\GO_{2\lambda}^-(q)$, 
since $p\nmid|\GO_{2\lambda}^+(q)|$ (see, e.g., \cite[p. 141]{Taylor}). So 
$\qq_0$ is a quadratic form of type $-1$. 

Now define $\qq\:V=(\F_{q^m})^\kappa\too\F_q$ by setting 
$ \qq(x_1,\dots,x_\kappa) = \textstyle\sum_{i=1}^\kappa\qq_0(x_i)$. 
Then $\qq$ is nondegenerate of type $(-1)^\kappa$, and 
$(-1)^\kappa\equiv(q^\lambda)^\kappa=q^n\equiv\gee$ 
(mod $p$). So $\GO(V,\qq)\cong\GO_{2n}^\gee(q)=\4G$.

We next claim, for $u_1,\dots,u_\kappa\in\F_{q^m}^\times$, that 
	\beqq \textup{$\diag(u_1,\dots,u_\kappa)$ is orthogonal $\iff$ 
	$u_i\4u_i=1$ for each $i$.} \label{e:N(u_i)=1} \eeqq
This means showing, for each $u\in\F_{q^m}^\times$, that 
$\qq_0(ux)=\qq_0(x)$ for all $x\in\F_{q^m}$, or equivalently 
$\Tr(u\4ux\4x)=\Tr(x\4x)$, if and only if $u\4u=1$. Since $u\mapsto 
u\4u=u^{1+q^\lambda}$ sends $\F_{q^m}=\F_{q^{2\lambda}}$ surjectively onto 
$\F_{q^\lambda}$, we must prove, for all $v\in\F_{q^\lambda}^\times$, that 
$\Tr((v-1)\F_{q^\lambda})=0$ only if $v=1$, and this is clear since $v\ne1$ 
implies $(v-1)\F_{q^\lambda}=\F_{q^\lambda}$. 

Thus the subgroup 
	\[ A = \bigl\{ \diag(u_1,\dots,u_\kappa) \,\big|\, u_i^{p^\ell}=1 
	\textup{ for all $i$} \bigr\} \cong (C_{p^\ell})^\kappa \]
is contained in $\GO(V,\qq)$ (recall $\ell=v_p(q^m-1)=v_p(q^\lambda+1)$). 
We saw in the proof of (a) that $N_{\GL(V)}(A)\cong(\F_{q^m}^\times\rtimes 
C_m)\wr\Sigma_\kappa$. A diagonal matrix is orthogonal only if it has 
order dividing $q^\lambda+1$, and the field automorphisms and permutations 
are clearly orthogonal. So 
	\[ N_{\4G}(A) = N_{\GO(V,\qq)}(A) \cong (C_{q^\lambda+1}\rtimes C_m) 
	\wr \Sigma_\kappa \]
and hence $\Aut_{\4G}(A) \cong C_m\wr\Sigma_\kappa$
as expected (recall $\4G\sim_p\GL_{2n}(q)\sim_p\SL_{2n}(q)$).

If $q$ is odd, then the diagonal matrices of order dividing $q^\lambda+1$ 
have determinant $1$ (over $\F_q$), and the permutations of coordinates 
in $V\cong(\F_{q^m})^\kappa$ have determinant $1$ since $m$ is even. 
Define $\alpha\in\GO(V,\qq)$ by setting 
$\alpha(x_1,\dots,x_\kappa)=(x_1^q,x_2,\dots,x_\kappa)$. By the normal 
basis theorem (see \cite[\S\,4.14]{Jacobson}), the field automorphism 
$(x\mapsto x^q)$ permutes cyclically an $\F_q$-basis for $\F_{q^m}$, and 
has determinant $-1$ since $2\mid m$. So $\det(\alpha)=-1$. It follows 
that $\Aut_G(A)\cong G(\mu,2,\kappa)$ has index $2$ in $\Aut_{\4G}(A)$, and 
hence that $\quotfus\calf$ is cyclic of order $\lambda=\mu/2$. 

If $q$ is a power of $2$, then a similar argument applies upon replacing 
the determinant by the Dickson invariant $D(\alpha)=\dim_{\F_q}([V,\alpha])$ (mod 
$2$) for $\alpha\in\GO(V,\qq)$ (see, e.g., \cite[Theorem 11.43]{Taylor}). 
The diagonal matrices and the permutations all lie in 
$\Ker(D)=\varOmega(V,\qq)$ since $\dim([V,\alpha])\in m\Z\le2\Z$ for all 
such $\alpha$. If $\alpha$ sends $(x_1,\dots,x_\kappa)$ to 
$(x_1^q,x_2,\dots,x_\kappa)$, then $[V,\alpha]$ has codimension $1$, 
as an $\F_q$-vector space, in 
$\F_{q^m}\times0$ by the normal basis theorem again, and hence 
$\dim_{\F_q}([V,\alpha])=m-1$ is odd.
\end{proof}

It remains to consider the exceptional groups of Lie type. Note that 
if $G$ is of exceptional Lie type and $S\in\sylp{G}$, then 
either $p$ divides the order of the Weyl group of the associated 
algebraic group, or else $S$ is abelian. In particular, $S$ can be 
nonabelian only for primes $p\le7$.

\begin{Prop} \label{p:p'-ind-2}
Fix an odd prime $p$, and let $G$ be a finite simple group of exceptional 
Lie type in defining characteristic different from $p$. Fix $S\in\sylp{G}$, 
assume $S\nnsg\calf$, and set $\calf=\calf_S(G)$. Then either 
\begin{enuma} 

\item $p=3$ and $G\cong\lie3D4(q)$ where $3\nmid q$, $\lie2F4(q)$ where 
$q=2^{2k+1}$ for $k\ge1$, or $\lie2F4(2)'$; or

\item $G\cong F_4(q)$, $E_6(q)$, $\lie2E6(q)$, $E_7(q)$, 
or $E_8(q)$, where $q\equiv\pm1$ (mod $p$) and $(G,p)$ is one of the pairs 
listed in Table \ref{tbl:AutG(A)-except}; or 

\item $p=5$ and $G\cong E_8(q)$, where $q\equiv\pm2$ (mod $5$); or

\item $p=3$, $G\cong G_2(q)$ where $q\equiv\pm1$ (mod $9$), 
$|O_3(\calf)|=3$, and $O^{3'}(\calf)$ has index $2$ in $\calf$ and is 
realized by $\SL_3^\pm(q)$.

\end{enuma}
In all cases except in case (a) when $G\cong\lie2F4(2)'$ or 
$G\cong\lie2F4(2^{2k+1})$ for $3\nmid(2k+1)$ (and hence 
$S\cong3^{1+2}_+$), there is a unique abelian subgroup $A\nsg S$ of maximal 
rank such that $C_S(A)=A$, and $A$ and $\Aut_G(A)$ are as described 
in Table \ref{tbl:AutG(A)-except}. In all of the cases (a), (b), and (c), 
except in case (a) when $G\cong\lie2F4(2^{2k+1})$ for $3\nmid(2k+1)$, 
$\calf$ is simple, and in particular, $O^{p'}(\calf)=\calf$.
	\begin{table}[ht]
	\[ \renewcommand{\arraystretch}{1.3} 
	\begin{array}{c|ccccc}
	\textup{case} & p & G & q & A & \Aut_\calf(A) \\\hline
	\textup{(a)} & 3 & \lie2F4(q) & q=2^{2n+1},~9\mid(q+1) & C_{3^\ell}\times 
	C_{3^\ell} & \GL_2(3) \\
	\textup{(a)} & 3 & \lie3D4(q) & 3\nmid q & C_{3^{\ell+1}}\times 
	C_{3^\ell} & \Sigma_3\times C_2 \\
	\textup{(b)} & 3 & F_4(q) & q\equiv\pm1~\textup{(mod $p$)}
	& (C_{p^\ell})^4 & W(F_4) \\
	\textup{(b)} & 3 & E_6^\gee(q) & q\equiv\gee~(\textup{mod $p$}) & 
	(C_{3^\ell})^5\times C_{3^{\ell-1}} & W(E_6) \\
	\textup{(b)} & 3 & E_6^\gee(q) & q\equiv-\gee~(\textup{mod $p$}) & 
	(C_{p^\ell})^4 & W(F_4) \\
	\textup{(b)} & 5 & E_6^\gee(q) & q\equiv\gee~(\textup{mod $p$}) & 
	(C_{5^\ell})^6 & W(E_6) \\
	\textup{(b)} & 3,5,7 & E_n(q)~\textup{($n=7,8$)} & 
	q\equiv\pm1~\textup{(mod $p$)} & (C_{p^\ell})^n & W(E_n) \\
	\textup{(c)} & 5 & E_8(q) & q\equiv\pm2~(\textup{mod $p$}) & 
	(C_{p^\ell})^4 & (C_4\circ2^{1+4}).\Sigma_6 \\
	\textup{(d)} & 3 & G_2(q) & q\equiv\pm1~(\textup{mod $9$}) & 
	(C_{3^\ell})^2 & \Sigma_3\times C_2 \\
	\end{array} \]
	\caption{\small{Here, $\ell=v_p(q^2-1)$, except in case (c) where 
	$\ell=v_5(q^2+1)$, and $\gee=\pm1$.}} 
	\label{tbl:AutG(A)-except}
	\end{table}
\end{Prop}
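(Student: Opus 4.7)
The plan is to proceed group-by-group, using the maximal torus structure of finite groups of Lie type to identify a candidate abelian subgroup $A\nsg S$, and then to exploit Lemmas \ref{theta-F-A}, \ref{l:Op'F=F:2}, and \ref{p:p'-index} to pin down $\quotfus\calf$, combined with Lemma \ref{l:no.str.cl.} for simplicity.

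First I would enumerate the pairs $(G,p)$ with $G$ an exceptional simple group of Lie type in characteristic different from $p$, $S\in\sylp{G}$ nonabelian, and $S\nnsg\calf$. Since $S$ nonabelian forces $p$ to divide the order of the Weyl group of the ambient algebraic group, only $p\in\{3,5,7\}$ occur. A classification of the $p$-radical overgroups of $S$ in exceptional groups (for which one can cite the tables in the Broto--Møller--Oliver series, or make a direct check using Deriziotis--Liebeck tables of maximal tori) produces exactly the list (a)--(d). Next I would, in each such case apart from the $S\cong 3^{1+2}_+$ exceptions in (a), take $A$ to be the $p$-power torsion in a maximal torus $T\le G$ containing a Sylow $p$-subgroup of $T$, where the isomorphism type of $T$ is the one dictated by $\ord_p(q)$. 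Then $N_G(T)/C_G(T)$ is the relative Weyl group, which gives the entry of $\Aut_\calf(A)$ in Table \ref{tbl:AutG(A)-except}; and $A\nsg S$, $C_S(A)=A$, and weak closure of $A$ in $\calf$ follow either from the uniqueness of $A$ as an abelian subgroup of maximal rank with $C_S(A)=A$, or, as in Proposition \ref{p:p'-ind-1}, from the characterization of the $G$-conjugacy class of generators by their eigenspace data on the natural module.

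Given $A$ and $\Aut_\calf(A)$, I would compute $\quotfus\calf$ with Lemma \ref{theta-F-A}. In cases (a), (b), (c), inspection shows that $\Aut_\calf(A)=O^{p'}(\Aut_\calf(A))$: the Weyl groups $W(E_6)$, $W(E_7)$, $W(E_8)$, $W(F_4)$ are generated by reflections, which have order $2$; $\GL_2(3)$ is generated by its subgroups of order $3$; the relevant subgroups of $W(E_8)$ for the $E_8(q)$, $p=5$ case are generated by $5$-elements together with involutions. In all such cases Lemma \ref{theta-F-A}(b) gives $\quotfus\calf=1$, so $\calf=O^{p'}(\calf)$. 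For simplicity, note that by \cite[Theorem 1.1]{FF} no proper nontrivial subgroup of $S$ is strongly closed in $\calf$ in these cases, so Lemma \ref{l:no.str.cl.} yields simplicity of $\calf$ as soon as $S$ does not decompose as a product of $\autf(S)$-conjugate strongly closed summands; the inequality $|S/A|<|A|$ combined with the irreducibility of $\Omega_1(A)$ as an $\F_p\Aut_\calf(A)$-module rules this out, exactly as in the proof of Proposition \ref{p:p'-ind-1}.

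Case (d), $G=G_2(q)$ with $q\equiv\pm1\pmod9$, is the principal novelty and the one I expect to be the main obstacle, since here $\calf$ is not simple. I would exploit the subgroup $\SL_3^\gee(q)\le G_2(q)$ with $\gee=\pm1$ according as $q\equiv\gee\pmod 3$: its $3$-Sylow coincides with $S$, and its $3$-fusion system $\cale$ is simple by Proposition \ref{p:p'-ind-1}(a), with $\Aut_\cale(A)\cong\Sigma_3$ of index $2$ in $\Aut_\calf(A)\cong\Sigma_3\times C_2$. Lemma \ref{l:Op'F=F:1}(b) (applied with $Z$ the center of $\SL_3^\gee(q)$, which is $\calf$-central and hence $O_3(\calf)$ of order $3$) gives $\cale\le O^{3'}(\calf)$, and then Lemma \ref{l:Op'F=F:2} shows $|\quotfus\calf|\le 2$. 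The reverse bound $|\quotfus\calf|\ge 2$ comes from Lemma \ref{p:p'-index}(b) with the same $Z$: the element of $\Aut_\calf(A)$ of order $2$ centralizing $Z$ does not lie in the normal closure of $\Aut_\cale(A)$, so $f$ in Lemma \ref{p:p'-index}(a) is nontrivial. This gives $O^{3'}(\calf)=\cale$ with index $2$, realized by $\SL_3^\gee(q)$. The remaining extraspecial cases of (a) (with $G=\lie2F4(2)'$ or $\lie2F4(2^{2k+1})$, $3\nmid(2k+1)$) require a separate and explicit Alperin-style argument on $S\cong 3^{1+2}_+$; the outcome is handled by existing literature on exotic fusion systems of rank $2$ at the prime $3$, to which I would refer.
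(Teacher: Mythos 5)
There is a genuine gap, and it sits exactly where the real work of this proposition lies. Your central claim for cases (a), (b), (c) is that ``inspection shows $\Aut_\calf(A)=O^{p'}(\Aut_\calf(A))$'' because the Weyl groups are generated by reflections of order $2$ (and similarly for $\GL_2(3)$, $\Sigma_3\times C_2$, and $(C_4\circ2^{1+4}).\Sigma_6$). This is false for odd $p$: generation by involutions shows only that these groups have no nontrivial quotient of \emph{odd} order, whereas $O^{p'}$ is the smallest normal subgroup with quotient of order prime to $p$, and quotients of order $2$ are prime to $p=3,5,7$. Concretely, $W(F_4)^{\ab}\cong C_2\times C_2$ and $W(E_n)^{\ab}\cong C_2$, so $\autf(A)/O^{p'}(\autf(A))\cong C_2\times C_2$ or $C_2$ in the cases of Table \ref{tbl:AutG(A)-except}(b) (this is precisely \eqref{e:AutF(A)/Op'} in the paper); likewise $O^{3'}(\GL_2(3))=\SL_2(3)$ (every $3$-element has determinant $1$), $O^{3'}(\Sigma_3\times C_2)=C_3$, and $(C_4\circ2^{1+4}).\Sigma_6$ has a quotient of order $2$. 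So Lemma \ref{theta-F-A}(b) never applies, and your proof gives no way to exclude a normal subsystem of index $2$ (or $4$), which is the actual content to be established: note that in case (d), and in the excluded $\lie2F4$ subcase of (a), such an index-$2$ subsystem really does exist, so no soft argument of the kind you propose can work uniformly.

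The paper closes this gap by genuinely different means in each family: for $\lie2F4(q)$ with $9\mid(q+1)$ it rules out the putative index-$2$ subsystem (which would force $\Aut_{O^{3'}(\calf)}(A)\cong\SL_2(3)$) by Lemma \ref{p:p-1}; for $\lie2F4(2)'$ and $\lie3D4(q)$ it invokes the classifications in \cite{RV}, \cite{indp1}, or \cite{DRV}; in case (b) it splits into $|S/A|=p$ (handled with the index-$p$ abelian-subgroup machinery of \cite[\S\,2]{indp2}) and $|S/A|\ge p^2$, where it takes $z\in Z(S)$, computes $\cale=C_\calf(z)=\calf_S(C_G(z))$ from Table \ref{tbl:H/z}, shows $O^{p'}(\cale)=\cale$ (or identifies it), and concludes via Lemma \ref{l:Op'F=F:2} that $\autf(A)=O^{p'}(\autf(A))\cdot\Aut_{O^{p'}(\cale)}(A)$; case (c) again uses \cite[\S\,2]{indp2}. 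None of this is optional bookkeeping --- it is the mechanism that replaces your incorrect appeal to Lemma \ref{theta-F-A}(b). Two smaller points: in case (d), $Z(S)=O_3(\calf)$ is normal but \emph{not} central in $\calf$ (if it were, $C_\calf(Z(S))$ could not have index $2$), and Lemma \ref{l:Op'F=F:1}(b) gives $O^{3'}(\cale)\le O^{3'}(\calf)$ rather than $\cale\le O^{3'}(\calf)$, so that argument needs the simplicity of the $\SL_3^{\pm}$ subsystem fed in correctly (the paper simply cites Aschbacher \cite[16.11]{A-gfit} here); and weak closure of $A$ in the exceptional groups is not obtained from eigenspace data on a natural module but from \cite[Proposition 5.13]{BMO2}.
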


\begin{proof} Assume $p$ is an odd prime and $G$ is a finite simple 
group of exceptional Lie type in defining characteristic different from 
$p$. Choose $S\in\sylp{G}$, and set $\calf=\calf_S(G)$. 
\begin{itemize} 
\item If $G$ is a Suzuki or Ree group or $G\cong\lie3D4(q)$, then either 
$S$ is abelian and hence $S\nsg\calf$, or $(G,p)$ is one of the pairs 
appearing in (a).

\item If $G\cong F_4(q)$, $E_n(q)$, or $\lie2E6(q)$, where $p\nmid q$, then 
by \cite[(10-1(3))]{GL}, either $S$ 
is abelian, or for $m=\ord_p(q)$, some cyclotomic polynomial 
$\Phi_{mp^a}(q)$ for $a\ge1$ appears in the formula for $|G|$. This 
occurs only when $(G,p)$ is one of the pairs listed in Table 
\ref{tbl:AutG(A)-except}, case (b) or (c). (Note that if $p=5$ and 
$G=E_6^\gee(q)$ for $\gee=\pm1$ and $q\equiv-\gee$ (mod $p$), then $S$ is 
abelian by this criterion.)

\item If $G\cong G_2(q)$ where $p\nmid q$, then either $p\ge5$ and $S$ is 
abelian; or $p=3$, $v_3(q^2-1)=1$, and $S\nsg\calf$ by 
\cite[16.11.4]{A-gfit}; or $q\equiv\pm1$ (mod $9$) and $(G,p)$ is as in 
(d).

\end{itemize}
Thus whenever $S\nnsg\calf$, the pair $(G,p)$ is one of those listed 
in cases (a)--(d), and in fact, one of those listed in 
Table \ref{tbl:AutG(A)-except}.

In cases (a), (b), and (c), except in certain cases when $p=3$ and 
$G\cong\lie2F4(q)$, we will prove below that $O^{p'}(\calf)=\calf$. 
Once this has been shown, then $\calf$ is simple by Lemma 
\ref{l:no.str.cl.}, together with \cite[Theorem 1.2]{FF} which says that no 
proper nontrivial subgroup of $S$ is strongly closed in $\calf$. 

\smallskip

\noindent\textbf{(a) } Assume $p=3$. If $G=\lie2F4(2)'$ and 
$S\in\syl3G$, then by \cite[Lemma 4.13]{RV}, $S$ is extraspecial of order 
$3^3$ and exponent $3$, $\Out_G(S)\cong D_8$, and all subgroups of order 
$9$ in $S$ are $\calf$-radical. A normal subsystem of index $2$ would have 
the same radical subgroups, and so there is no such subsystem by the same 
lemma.

If $G=\lie2F4(q)$ where $q=2^{2k+1}$ for $k\ge1$ and $3\nmid(2k+1)$, then 
$\ell=v_3(q^2-1)=1$, and so $S$ is again extraspecial of order $3^3$. In 
this case, the subgroup $\lie2F4(2)$ controls $3$-fusion in $G$, and the 
fusion system of $\lie2F4(2)'$ is normal of index $2$ in $\calf$. This 
follows from \cite[Lemma 4.13]{RV} (where all saturated fusion systems over 
$S$ containing that of $\lie2F4(2)'$ are listed) and the main theorem in 
\cite{Malle} (with its information about the subgroups of $G$). 

If $G=\lie2F4(q)$ where $q=2^{2k+1}$ for $k\ge1$ and $3\mid(2k+1)$, 
then $9|(q+1)$. By the main theorem in \cite{Malle}, $G$ contains a 
maximal torus $H\cong C_{q+1}\times C_{q+1}$ such that 
$\Aut_G(H)\cong\GL_2(3)$ and $N_G(H)\ge S$. Set $A=O_p(H)\cong 
C_{3^\ell}\times C_{3^\ell}$, where $\ell=v_3(q+1)=v_3(q^2-1)\ge2$: then 
$|S/A|=3$, $|[S,A]|=3^\ell\ge3^2$, and $\Aut_G(A)\cong\GL_2(3)$. By Lemma 
\ref{theta-F-A}(a,b), if $\quotfus\calf\ne1$, then it has order $2$ and 
$\Aut_{O^{3'}(\calf)}(A)=O^{3'}(\autf(A))\cong\SL_2(3)\cong2A_4$. But this 
contradicts Lemma \ref{p:p-1}.

If $G=\lie3D4(q)$ where $3\nmid q$, then set $\ell=v_3(q^2-1)$, so that 
$v_3(|G|)=2(\ell+1)$. By the main theorem in \cite{Kleidman-3D4}, $S$ 
contains a unique subgroup $A\cong C_{3^{\ell+1}}\times C_{3^\ell}$ 
of index $3$, and $\Aut_G(A)\cong\Sigma_3\times C_2\cong D_{12}$. (If 
$q\equiv\gee$ (mod $3$) for $\gee\in\{\pm1\}$, then $N_G(A)$ is contained 
inside the maximal subgroup $(C_{q^2+\gee 
q+1}\times_{C_3}\SL_3^\gee(q))\rtimes\Sigma_3$ listed in 
\cite{Kleidman-3D4}.) By the conditions listed in \cite[Theorem 
2.8(a)]{indp1}, there can be no normal subsystem of index $2$ in 
$\calf=\calf_S(G)$. 

Alternatively, the result for $G=\lie2F4(q)$ or $\lie3D4(q)$ follows from 
Tables 2 and 4 in \cite{DRV}: when $m=v_3(|G|)$, then $S\cong B(3,m;0,0,0)$ 
in the notation of \cite{DRV}, and those two tables list all saturated 
fusion systems over these groups.

\smallskip

\noindent\textbf{(b) } If $q\equiv\gee$ (mod $p$) where $\gee=\pm1$, 
then by  Lemma \ref{equiv-simp-2}(c), there is another prime power 
$q^\vee$ such that $\ord_p(q^\vee)=\ord_p(-q)$ (hence $q^\vee\equiv-\gee$ 
(mod $p$)) and $E_6(q)\sim_p \lie2E6(q^\vee)\sim_p F_4(q^\vee)$. This, 
together with similar applications of Lemma \ref{equiv-simp-2}(d), shows 
that it suffices to prove (b) when $G=\gg(q)$ for $q\equiv1$ (mod $p$) and 
$\gg=F_4$ or $E_n$. 

A description of $A$ in these cases can be found in \cite[10-1(2)]{GL} or 
\cite[Theorem 4.10.2(c)]{GLS3}, and more details on $\Aut_G(A)$ are given 
in \cite[Lemmas 6.1 \& 5.3]{BMO2}. In particular, $A$ is homocyclic of rank 
$\rk(\gg)$ and exponent $p^\ell$ (where $\ell=v_p(q-1)$), except when 
$\gg=E_6$ and $p=3$, and $\autf(A)\cong W(\gg)$. Also, $A$ is weakly 
closed in $\calf$ by \cite[Proposition 5.13]{BMO2}. Recall that we are 
working with the simple groups, hence the adjoint forms, which is why the 
description of $A$ is slightly different when $(G,p)=(E_6(q),3)$.

It remains to show that $O^{p'}(\calf)=\calf$, still assuming 
$G=\gg(q)$ for $\gg=F_4$ or $E_n$ and $q\equiv1$ (mod $p$) (and $S$ is 
nonabelian). Since $\autf(A)\cong W(\gg)$ is the usual Weyl group for 
$G$, we have 
	\beqq \autf(A)/O^{p'}(\autf(A)) 
	\cong \begin{cases} 
	C_2\times C_2 & \textup{if $\gg=F_4$ and $p=3$} \\
	C_2 & \textup{if $\gg=E_6,E_7,E_8$ and $3\le 
	p\mid|W(\gg)|$.}
	\end{cases} \label{e:AutF(A)/Op'} \eeqq
When $p=3$, these follow from the ``standard presentation'' of $W(\gg)$ as 
a group of reflections acting on a vector space, with a fundamental system 
of reflections as generating set \cite[Theorem 2.4.1]{Carter}. When $p=5,7$, 
the result follows from the fact that $W(E_n)$ contains a quasisimple 
subgroup of index $2$. In all cases, we set $\ell=v_p(q-1)\ge1$.

We consider two different cases.

\noindent\textbf{Case (b.1) } Assume $|S/A|=p$; equivalently, 
$v_p(|W(\gg)|)=1$. Then either $p=5$ and $\gg=E_6$ or 
$E_7$, or $p=7$ and $\gg=E_7$ or $E_8$, and we are in the situation of 
\cite[Section 2]{indp2}. We refer to Notation 2.4 in that paper, and in 
particular to the subgroups and homomorphism
	\beqq \begin{split} 
	Z &= Z(S) \quad\textup{and}\quad Z_0=Z\cap[S,S] 
	\qquad\textup{($|Z_0|=p$ by \cite[Lemma 2.2(c)]{indp2})} \\
	\Aut_\cale^\vee(S) &= \bigl\{\alpha\in\Aut_\cale(S) \,\big|\, 
	[\alpha,Z]\le Z_0 \bigr\} \qquad\textup{for}~ 
	O^{p'}(\calf) \le\cale\le\calf \\
	\Delta_t &= \{ (r,r^t) \,|\, r\in (\Z/p)^\times \} \le 
	\Delta\defeq (\Z/p)^\times\times(\Z/p)^\times \quad\textup{for 
	$t\in\Z$} \\
	\mu\:&\Aut(S) \Right2{} \Delta \quad\textup{where}\quad 
	\mu(\alpha)=(r,s) ~\textup{if}~ \begin{cases} 
	\alpha(x)\in x^rA & \textup{for $x\in S\sminus A$,} \\ 
	\alpha(g)=g^s & \textup{for $g\in Z_0$.}
	\end{cases} 
	\end{split} \label{e:COS-defs} \eeqq
 By \cite[Lemma 2.6(a)]{indp2} and since $A\nnsg\calf$, 
$\mu(\Aut_{O^{p'}(\calf)}^\vee(S))$ must contain one 
of the subgroups $\Delta_0$ or $\Delta_{-1}$. 
By Table 4.1 in \cite{indp2} (the description of $\mu_V(G_0^\vee)$), 
this would not be the case if 
$\Aut_{O^{p'}(\calf)}(A)$ were strictly contained in $\autf(A)\cong 
W(\gg)$. Thus $\Aut_{O^{p'}(\calf)}(A)=\autf(A)$, and so 
$O^{p'}(\calf)=\calf$ by Lemma \ref{theta-F-A}(c).

\noindent\textbf{Case (b.2) } Assume $|S/A|\ge p^2$; equivalently, 
$p^2\mid|W(\gg)|$. Then either $p=3$ and $\gg=F_4$ or $E_n$, or $p=5$ and 
$\gg=E_8$. In each case, there is an element $z\in Z(S)$ whose 
centralizer $H=C_G(z)$ is as described in the Table \ref{tbl:H/z}. The 
normalizer $N_G(z)$ is described in \cite[Table 5.1]{LSS}, 
and the descriptions of $O^p(H)/\gen{z}$ and $H/O^p(H)$ are based on that. 
A comparison of orders shows that $p\nmid[G:H]$ in each case, 
and hence that we can choose $z\in Z(S)$.
	\begin{table}[ht]
	\renewcommand{\arraystretch}{1.3} \renewcommand{\arraycolsep}{3mm}
	\[ \begin{array}{cc|ccc}
	\gg & p & O^p(H)/\gen{z}=O^p(H/\gen{z}) & H/O^p(H) & \4H 
	\\\hline
	F_4 & 3 & \PSL_3(q)\times\PSL_3(q) & C_3 & 
	\SL_3(\4\F_q)\circ\SL_3(\4\F_q) \\
	E_6 & 3 & \PSL_3(q)\times\PSL_3(q)\times\PSL_3(q) & C_3\times C_3 & 
	\SL_3(\4\F_q)\circ\SL_3(\4\F_q)\circ\SL_3(\4\F_q) \\
	E_7 & 3 & E_6(q)\times C_{(q-1)/3^\ell} & C_{3^{\ell}} & 
	3E_6(\4\F_q)\circ\4\F_q^\times \\
	E_8 & 3 & E_6(q)\times\PSL_3(q) & C_3 & 
	3E_6(\4\F_q)\circ\SL_3(\4\F_q) \\
	E_8 & 5 & \PSL_5(q)\times\PSL_5(q) & C_5 & 
	\SL_5(\4\F_q)\circ\SL_5(\4\F_q) \\
	\end{array} \]
	\caption{\small{In all cases, $G=\gg(q)$ where $\ell=v_p(q-1)\ge1$, 
	and $\4H=C_{\gg(\4{\F}_q)}(z)$ and $H=C_G(z)$ for a certain 
	$z\in Z(S)$ of order $p$. Also, $E_6(-)$ means the simple group and 
	$3E_6(-)$ its universal form, and ``$X\circ Y$'' denotes a central 
	product of $X$ and $Y$ over $C_p$.}}
	\label{tbl:H/z}
	\end{table} 

The description of the centralizer of $z$ in the algebraic group 
$\gg(\4\F_q)$ in the last column of the table (taken from \cite[Table 
VI]{Griess}), is included to better explain the structure of $H$. For 
example, when $p=3$ and $G=F_4(q)$, $H$ is an extension of 
$\SL_3(q)\circ\SL_3(q)$ by $C_3$ acting via diagonal automorphisms of order 
$3$ on each factor. 

Set $\cale=C_\calf(z)=\calf_S(H)$. If $\ell=v_p(q-1)\ge2$ or $p=5$, then 
the $p$-fusion system of $\PSL_p(q)$ is simple by Proposition 
\ref{p:p'-ind-1}(a), and $O^{p'}(O^p(\cale)/\gen{z})$ is the fusion system 
of $O^{p'}(O^p(H)/\gen{z})$ by the table. By Lemmas \ref{l:Op'Op<Op'} 
and \ref{mod-Z(F)}, $O^{p'}(\cale)=\cale$, except when $(\gg,p)=(E_7,3)$, in 
which case $O^{p'}(\cale)=\calf_S(O^{p'}(H))$. So by \eqref{e:AutF(A)/Op'}, 
$\autf(A)$ is generated by $O^{p'}(\autf(A))$ and $\Aut_{O^{p'}(\cale)}(A)$, 
and $O^{p'}(\calf)=\calf$ by Lemma \ref{l:Op'F=F:2}. 

Now assume $\ell=1$ and $p=3$. Set $K=\PSL_3(q)$ and fix $U\in\syl3K$. Then 
$U\cong C_3\times C_3$, so $\calf_U(K)=\calf_U(N_K(U))\cong\calf_U(U\rtimes Q_8)$. If 
$\gg=F_4$, then $\cale/\gen{z}$ is the fusion system of an extension of 
$(C_3)^4\rtimes(Q_8\times Q_8)$ by $C_3$ acting faithfully on each factor 
$Q_8$, so $O^{3'}(\cale/\gen{z})=\cale/\gen{z}$, and $O^{3'}(\cale)=\cale$ 
by Lemma \ref{mod-Z(F)}. A similar argument shows that 
$O^{3'}(\cale)=\cale$ when $\gg=E_6$. This, together with the descriptions 
in Table \ref{tbl:H/z}, show that $\autf(A)$ is generated by 
$O^{p'}(\autf(A))$ and $\Aut_{O^{p'}(\cale)}(A)$ in all four cases (also 
when $\gg=E_7$ or $E_8$) and hence that $O^{p'}(\calf)=\calf$ by Lemma 
\ref{l:Op'F=F:2} again.

\smallskip

\noindent\textbf{(c) } Now assume $p=5$, and $G=E_8(q)$ where $q\equiv\pm2$ 
(mod $5$). Set $\ell=v_5(q^4-1)=v_5(q^2+1)$. By \cite[Lemma 6.7]{BMO2}, 
there is a unique $A=(C_{5^\ell})^4$ of index $5$ in $S$, and 
$\Aut_\calf(A)\cong(C_4\circ2^{1+4}).\Sigma_6$ (where ``$\circ$'' denotes 
the central product over $C_2$). We are thus in the situation of 
\cite[Section 2]{indp2}. Since $\rk(A)\le p$, we have $|Z(S)|=p$, and hence 
$Z=Z_0$ and $\autf^\vee(S)=\autf(S)$ in the notation of 
\eqref{e:COS-defs}. So $O^{p'}(\calf)=\calf$ by Table 2.2 and Lemma 
2.7(c.i) in \cite{indp2}.

\smallskip

\noindent\textbf{(d) } This case, where $G=G_2(q)$ and $p=3$, was handled 
by Aschbacher \cite[16.11]{A-gfit}. In particular, he showed there that 
$S\nsg\calf$ if $q\equiv\pm4,\pm5$ (mod $9$), while $O_3(\calf)=Z(S)$ and 
$O^{3'}(\calf)=C_\calf(Z(S))$ has index $2$ in $\calf$ and is realized by 
$\SL_3^\pm(q)$ if $q\equiv\pm1$ (mod $9$). 
\end{proof}

With the help of Propositions \ref{p:p'-ind-1} and \ref{p:p'-ind-2} and 
Tables \ref{tbl:m-mu-theta} and \ref{tbl:AutG(A)-except}, we can now 
check that certain fusion systems are exotic.

\begin{Lem} \label{l:exotic}
Assume the classification of finite simple groups. Let $\calf$ be a 
saturated fusion system over a finite $p$-group $S$, for some prime 
$p\ge5$, and assume $A\nsg S$ is abelian and $\calf$-centric. 
Assume also, for some $\ell\ge1$, $\kappa\ge p$, and 
$2< m\mid(p-1)$, that $A$ is homocyclic of rank $\kappa$ and exponent 
$p^\ell$, and that with respect to some basis $\{a_1,\dots,a_\kappa\}$ 
for $A$ as a $\Z/p^\ell$-module, $\autf(A)$ contains $G(m,m,\kappa)$ with 
index prime to $p$, and 
	\[ \autf(A)\cap G(m,1,\kappa)=G(m,r,\kappa) \le 
	\GL_\kappa(\Z/p^\ell) \qquad\textup{for some $2<r\mid m$.} \]
Then either $A\nsg\calf$ or $\calf$ is exotic. 
\end{Lem}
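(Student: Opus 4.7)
My plan is to argue by contradiction: assume $\calf = \calf_S(G)$ is realizable for some finite group $G$ with $S \in \sylp{G}$, and that $A \nnsg \calf$, aiming to derive a contradiction using the classification of finite simple groups. First I would reduce to the case where $G$ is almost simple. By Lemmas \ref{mod-Z(F)} and \ref{diag-aut} one may assume $O_{p'}(G) = 1$ and $p \nmid |Z(G)|$, so that $F^*(G)$ is a central product of isomorphic nonabelian simple components. The hypothesis that $\autf(A) \ge G(m,m,\kappa)$ with $\kappa \ge p$ makes $\Omega_1(A)$ an irreducible $\F_p[G(m,m,\kappa)]$-module (the $\Sigma_\kappa$ factor acts as the full permutation representation on a basis, while the $(C_m)^{\kappa-1}$ piece rules out proper invariant subspaces once $m\ge3$). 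As in the simplicity argument at the start of the proof of Proposition \ref{p:p'-ind-1}, a nontrivial direct factorization of $F^*(G)$ would induce a factorization of $S$, and hence of $A$, incompatible with this irreducibility. Hence $F^*(G)$ is a single nonabelian simple group $L$, and $G$ is almost simple.

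Next I would appeal to the classification of simple groups with nonabelian Sylow $p$-subgroups for $p\ge5$. Sporadic groups are ruled out by inspection of Table \ref{tbl:sporlist}: none has a homocyclic centric normal subgroup of rank $\kappa\ge p\ge5$ with $\autf(A)$ of the required form. Groups of Lie type in defining characteristic $p$ (Proposition \ref{t:tame.simple}(b)) are ruled out because their Sylow $p$-subgroups are unipotent radicals with no abelian centric normal subgroup admitting a $\Sigma_\kappa$-action of rank $\kappa\ge p$. This leaves three families: (i) alternating groups $A_n$; (ii) classical groups in cross characteristic, covered by Proposition \ref{p:p'-ind-1} and Table \ref{tbl:m-mu-theta}; and (iii) exceptional groups in cross characteristic, covered by Proposition \ref{p:p'-ind-2} and Table \ref{tbl:AutG(A)-except}.

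For each of these families I would compute $\autf(A)\cap G(m,1,\kappa)$ and show it equals $G(m,r,\kappa)$ only for $r\in\{1,2\}$, contradicting $r>2$. For $L=A_n$ the natural centric normal homocyclic subgroup has automizer contained in $C_{p-1}\wr\Sigma_{[n/p]}=G(p-1,1,[n/p])$, with image in $A_n$ of index at most $2$, so the intersection with $G(m,1,\kappa)$ is $G(m,1,\kappa)$ or $G(m,2,\kappa)$. For classical groups, Table \ref{tbl:m-mu-theta} gives $\Aut_{\calf_S(L)}(A)\in\{C_\mu\wr\Sigma_\kappa,\,G(\mu,2,\kappa)\}$, yielding $r\in\{1,2\}$ directly. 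For exceptional groups, Table \ref{tbl:AutG(A)-except} shows $\Aut_{\calf_S(L)}(A)$ is a Weyl group $W(F_4)$ or $W(E_n)$, or the exotic automizer $(C_4\circ 2^{1+4}).\Sigma_6$; an order count combined with the known subgroup structure of these Weyl groups (which have very few normal subgroups and no subgroup $G(m,m,\kappa)$ of index prime to $p$ when $m\ge3$ and $\kappa$ equals the Lie rank) again rules out $r>2$. Finally, passage from $\calf_S(L)$ to $\calf=\calf_S(G)$ is by an extension of index prime to $p$ (via Lemma \ref{F_0<|F_i} applied to $O^{p'}(\calf)$), which can only enlarge $\autf(A)$ by automorphisms coming from $\Out(L)$, and these field/graph automorphisms preserve the monomial basis structure, so they do not raise $r$ above $2$.

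The main obstacle is the last step of the exceptional case: verifying by direct order/structure comparison that the Weyl groups $W(E_n)$ for $n=6,7,8$ and $W(F_4)$, and the group $(C_4\circ 2^{1+4}).\Sigma_6$, contain no subgroup $G(m,m,\kappa)$ of index prime to $p$ with $m\ge3$. For instance, for $(E_8,p=5)$ and $m=4,\kappa=8$, the ratio $|W(E_8)|/|G(4,4,8)|$ has a factor of $5$ in the denominator, which already rules out this configuration. Similar order-theoretic or character-theoretic computations dispose of the remaining pairs. Once all cases yield contradictions, we conclude that a realizable $\calf$ with the given hypotheses forces $A\nsg\calf$, and otherwise $\calf$ is exotic.
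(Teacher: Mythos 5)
Your overall strategy (contradiction, reduce to a simple group, then a CFSG case analysis comparing $\autf(A)$ with the known automizers, including order-divisibility checks for the exceptional groups) is the same as the paper's, but the reduction step has genuine gaps. First, you never set up a minimal counterexample, and this is what the paper uses to get the key preliminary facts: that $O_{p'}(G)=1$, and — by passing to $G/\Omega_1(A)$ — that $\Omega_1(A)\nnsg\calf$, hence $\Omega_k(A)\nnsg\calf$ for all $k$. Without this, your reduction to ``$F^*(G)$ a central product of components'' is not available: nothing in the hypotheses rules out $O_p(G)\ne1$ (the lemma does not assume $O_p(\calf)=1$), and normal subgroups $H\nsg O^{p'}(G)$ meeting $S$ inside $A$ (in $\Omega_k(A)$) must be excluded; the paper does this precisely via strongly closed subgroups together with the fact that $\Omega_1(A)\nnsg\calf_0$. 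Moreover, your claim that a direct factorization of $F^*(G)$ induces one of $S$ is not correct as stated: $S$ is a Sylow subgroup of $G$, not of $F^*(G)$, $G$ may permute the components, and only $S\cap F^*(G)$ factors. Finally, your last passage ``from $\calf_S(L)$ to $\calf=\calf_S(G)$ is an extension of index prime to $p$'' fails if $p\mid|G/L|$ (e.g.\ field automorphisms of order $p$), in which case $S\not\le L$. The paper avoids all of this by working with $G_0=O^{p'}(G)\ge S$ and proving directly (Step 1) that $G_0$ is simple, using that $\Aut_H(A)\cap G(m,m,\kappa)$ is normal in $G(m,m,\kappa)$ for $H\nsg G_0$, so it either has order prime to $p$ (forcing $H\cap S=\Omega_k(A)$, strongly closed abelian, hence normal in $\calf_0$ — contradiction) or contains $O^{p'}(G(m,m,\kappa))$ (forcing $H\cap S=S$ — contradiction).

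The second gap is the identification step in the cross-characteristic Lie type case. You compare $\autf(A)$ with the automizers in Tables \ref{tbl:m-mu-theta} and \ref{tbl:AutG(A)-except}, but those tables describe one particular abelian subgroup of $S$; a priori the $A$ of the hypothesis could be a different abelian subgroup, and then the comparison says nothing. The paper bridges this by proving that $\Omega_1(A)$ is the \emph{unique} elementary abelian subgroup of $S$ of rank $\kappa$ (the counting argument \eqref{e:A-uniq}, which also gives weak closure of $\Omega_1(A)$ and $A=C_S(\Omega_1(A))$), and then invoking \cite[10-2]{GL} that for $p\ge5$ a Sylow $p$-subgroup of a cross-characteristic group of Lie type has a unique elementary abelian subgroup of maximal rank; only then does $\autf(A)$ have to agree with a table entry, the point being that both monomial structures come from the same decomposition of $A$, so $m\mid\5m$ and $r\le2$. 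You also need, and do not verify, that $A$ is a radical $p$-subgroup of the simple group (used for the alternating case via \cite[\S\,2]{AF} and for the defining-characteristic case via Borel--Tits). These omissions are not cosmetic: without the uniqueness of $\Omega_1(A)$ and the normal-subgroup analysis, the case-by-case comparison does not attach to the given $A$ at all.
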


\begin{proof} Assume otherwise: assume $A\nnsg\calf$ and $\calf$ is 
realized by the finite group $G$. Assume also that $|G|$ is minimal among 
all such counterexamples $(A,\calf,G)$ to the lemma. In particular, 
$|G|$ is minimal among all orders of finite groups realizing $\calf$, and 
so $O_{p'}(G)=1$. 

If $\ell>1$ and $\Omega_1(A)\nsg\calf$, then $\Omega_1(A)\nsg G$ by the 
minimality assumption ($\calf$ is realized by $N_G(\Omega_1(A))$). So in 
this case, $A/\Omega_1(A)\nnsg\calf/\Omega_1(A)$ where $\calf/\Omega_1(A)$ 
is realized by $G/\Omega_1(A)$, which again contradicts our minimality 
assumption on $|G|$. Thus $\Omega_1(A)\nnsg\calf$, and hence 
$\Omega_k(A)\nnsg\calf$ for each $1\le k\le\ell$. 

Since $\autf(A)$ contains $G(m,r,\kappa)$ with index prime to $p$, 
some subgroup conjugate to $\Aut_S(A)$ is contained in 
$G(1,1,\kappa)\cong\Sigma_\kappa$, and hence $\Aut_S(A)$ permutes some 
basis of $\Omega_1(A)$. Also, $G(m,r,\kappa)$ acts faithfully on 
$\Omega_1(A)$, as does each subgroup of $\autf(A)$ of order prime to $p$ 
(see \cite[Theorem 5.2.4]{Gorenstein}). So by the assumptions on 
$\autf(A)$, 
	\beqq \parbox{95mm}{$\autf(A)$ acts faithfully on $\Omega_1(A)$, 
	$\Aut_S(A)$ permutes a basis of $\Omega_1(A)$, and 
	$C_S(\Omega_1(A))=A$.} \label{e:CS(O1(A))}
	\eeqq

We claim that
	\beqq \textup{$\Omega_1(A)$ is the only elementary abelian subgroup 
	of $S$ of rank $\kappa$.} \label{e:A-uniq} \eeqq
This is well known, but the proof is simple enough that we give it here. Set 
$V=\Omega_1(A)$ for short, let $W\le S$ be another elementary abelian 
subgroup, and set $\4W=\Aut_W(V)$ and $r=\rk(\4W)$. Then $r=\rk(W/(V\cap 
W))$ since $C_W(V)=W\cap A=W\cap V$ by \eqref{e:CS(O1(A))}. Let $\calb$ be 
a basis for $V$ permuted by $\4W$, and assume $\4W$ acts on 
$\calb$ with $s$ 
orbits (including fixed orbits) of lengths $p^{m_1},\dots,p^{m_s}$. Then 
$m_1+\dots+m_s\ge r=\rk(\4W)$, and hence 
	\[ \rk(W) = \rk(\4W)+\rk(W\cap V) \le r+\rk(C_V(\4W)) 
	= r + s \le \sum_{i=1}^s(1+m_i) < 
	\sum_{i=1}^sp^{m_i}=\rk(V), \]
proving \eqref{e:A-uniq}. In particular, $\Omega_1(A)$ and 
$A=C_S(\Omega_1(A))$ are weakly closed in $\calf$.

Set $G_0=O^{p'}(G)$ and $\calf_0=\calf_S(G_0)\nsg\calf$, and let 
$\Gamma=\autf(A)$ and $\Gamma_0=\Aut_{\calf_0}(A)$ for short. By Lemma 
\ref{l:QnsgEnsgF} and since $\Omega_1(A)$ is weakly closed in $\calf$, we 
have $\Omega_1(A)\nnsg\calf_0$.

\smallskip

\noindent\textbf{Step 1: } We first claim that $G_0$ is simple. Assume 
otherwise: let $H\nsg G_0$ be a proper nontrivial normal subgroup, and set 
$T=H\cap S\in\sylp{H}$. Thus $1\ne T<S$ since $O_{p'}(G_0)=1$ and 
$O^{p'}(G_0)=G_0$, and $T$ is strongly closed in $\calf_0$. 

Now, $\Aut_H(A)\nsg\Gamma_0$ since $H\nsg G_0$, and by assumption, 
$\Gamma_0$ contains $G(m,m,\kappa)$ with index prime to $p$. So 
$\Aut_H(A)\cap G(m,m,\kappa)$ is normal in 
$G(m,m,\kappa)\cong(C_m)^{\kappa-1}\rtimes\Sigma_\kappa$, where $\kappa\ge 
p\ge5$. Consequently, either $\Aut_H(A)\cap G(m,m,\kappa)$ and hence $\Aut_H(A)$ 
have order prime to $p$, or $\Aut_H(A)$ contains 
$O^{p'}(G(m,m,\kappa))$. In the latter case, 
$H\ge[N_H(A),A]\ge[O^{p'}(G(m,m,\kappa)),A]=A$, and hence $N_H(A)$ has 
index prime to $p$ in $N_G(A)$. But then $T=S$, a contradiction. 

Thus $p\nmid|\Aut_H(A)|$, and so $T=O^{p'}(N_H(A))\le A$. Also, $T$ 
is normalized by $\Gamma_0\ge O^{p'}(G(m,m,\kappa))$, and so 
$T=\Omega_k(A)$ for some $1\le k\le\ell$. But then $T$ is abelian and 
strongly closed in $\calf$, so $T\nsg\calf_0$ by \cite[Corollary 
I.4.7(a)]{AKO}, hence $\Omega_1(A)=\Omega_1(T)\nsg\calf_0$, which we 
already showed is impossible. So there is no such $T$, and $G_0$ 
must be simple.

\smallskip

\noindent\textbf{Step 2: } It remains to show that $G_0$ cannot be any 
known simple group. Note that $A$ is a radical $p$-subgroup of $G_0$, since 
$O_p(\Aut_{G_0}(A))=1$ and $p\nmid|C_{G_0}(A)/A|$ (i.e., $A$ is 
$\calf_0$-centric). Although we do not know $\Aut_{G_0}(A)$ precisely, we 
know that it is contained in $\autf(A)$ and contains 
$O^{p'}(G(m,m,\kappa))\cong(C_m)^{\kappa-1}\rtimes A_\kappa$.

Since $p\ge5$ and $\rk_p(G_0)\ge p$, $G_0$ cannot be a sporadic group by 
\cite[Table 5.6.1]{GLS3}. 

By \cite[\S\,2]{AF}, for each abelian radical $p$-subgroup 
$B\le\Sigma_\kappa$, $\Aut_{\Sigma_\kappa}(B)$ is a product of wreath 
products of the form $\GL_c(p)\wr\Sigma_\kappa$ for $c\ge1$ and 
$\kappa\ge1$. Thus $\Aut_{A_\kappa}(B)$ can have index $2$ in 
$C_{p-1}\wr\Sigma_\kappa$ for some $\kappa$, but not index larger than $2$. 
So $G_0$ cannot be an alternating group.

If $G_0\in\Lie(p)$, then $N_{G_0}(A)$ is a parabolic subgroup by the 
Borel-Tits theorem \cite[Corollary 3.1.5]{GLS3} and since $A$ is centric 
and radical. So in the notation of \cite[\S\,2.6]{GLS3}, $A=U_J$ and 
$N_{G_0}(A)=P_J$ (up to conjugacy) for some set $J$ of primitive roots for 
$G_0$. Hence by \cite[Theorem 2.6.5(f,g)]{GLS3}, 
$O^{p'}(N_{G_0}(A)/A)\cong O^{p'}(L_J)$ is a central product of groups in 
$\Lie(p)$, contradicting the assumption that $O^{p'}(\Aut_G(A))\cong 
G(m,r,\kappa)$. 

Now assume that $G_0\in\Lie(q_0)$ for some prime $q_0\ne p$. By 
\cite[10-2]{GL} (and since $p\ge5$), $S\in\sylp{G_0}$ contains a unique 
elementary abelian $p$-subgroup of maximal rank, and by 
\eqref{e:A-uniq}, it must be equal to $\Omega_1(A)$. Hence 
$\autf(A)$ must be as in one of the entries in Table 
\ref{tbl:m-mu-theta} or \ref{tbl:AutG(A)-except}. \\
$\bullet$ If $G_0$ is a classical group and hence 
$\autf(A)\cong G(\5m,\5r,\kappa)$ for $\5m=\mu$ or $2\mu$ and $\5r\le2$ 
(see the next-to-last column in Table \ref{tbl:m-mu-theta}, 
and recall that $G(\5m,1,\kappa)\cong C_{\5m}\wr\Sigma_\kappa$), 
then the identifications $\autf(A)\cong G(\5m,\5r,\kappa)$ and $\autf(A)\ge 
G(m,r,\kappa)$ are based on the same decompositions of $A$ as a direct sum 
of cyclic subgroups, and hence we have $m\mid\5m$ and $r\le2$, 
contradicting our original assumption. \\
$\bullet$ If $G_0$ is an 
exceptional group, then by Table \ref{tbl:AutG(A)-except}, either 
$\kappa=\rk(A)<p$, or $p=3$, or (in case (b)) $m^{\kappa-1}\cdot\kappa!$ 
does not divide $|\autf(A)|$ for any $m>2$ and hence $\autf(A)$ cannot 
contain any such $G(m,r,\kappa)$. 
\end{proof}

The results in this section are now summarized in the following theorem, 
in which the statements that certain fusion systems are exotic depend 
on the classification of finite simple groups. 
Recall that a fusion subsystem of $\calf$ is \emph{characteristic} in 
$\calf$ if it gets sent to itself by each $\alpha\in\Aut(\calf)$.

\begin{Thm} \label{F.not.simple}
Fix a prime $p$ and a known finite simple group $G$ such that $p\mid|G|$. 
Fix $S\in\sylp{G}$, and set $\calf=\calf_S(G)$. Then either 
\begin{enuma} 

\item $S\nsg\calf$; or 

\item $p=3$ and $G\cong G_2(q)$ for some $q\equiv\pm1$ (mod $9$), 
in which case $|O_3(\calf)|=3$, $|\quotfus[3]\calf|=2$, and 
$O^{3'}(\calf)$ is realized by $\SL_3^\pm(q)$; or 

\item $p\ge5$ and $G$ is one of the classical groups $\PSL_n^\pm(q)$, 
$\PSp_{2n}(q)$, $\varOmega_{2n+1}(q)$, or $\POmega_{2n+2}^\pm(q)$ where $n\ge2$ 
and $q\not\equiv0,\pm1$ (mod $p$), in which case $\quotfus\calf$ is cyclic 
and $O^{p'}(\calf)$ is simple and exotic; or 

\item $|\quotfus\calf|\le2$, $O^{p'}(\calf)$ is simple, and it is realized 
by a known finite simple group $G^*$. 

\end{enuma}
Moreover, the following hold in case \textup{(c)}. 
\begin{enumerate}[\rm({c.}1) ]

\item There are a subsystem $\calf_0\nsg\calf$ of index at most 
$2$ in $\calf$, and a finite group $G_0$ realizing $\calf_0$, with the 
following properties. For each saturated fusion system $\cale$ over $S$ 
such that $O^{p'}(\cale)=O^{p'}(\calf)$, either 
\begin{itemiz} 
\item $\cale\ge\calf_0$, and $\cale=\calf_S(H)$ for some group $H$ with 
$G_0\nsg H$ and $p\nmid|H/G_0|$; or 
\item $\cale\ngeq\calf_0$, and $\cale$ is exotic.
\end{itemiz}

\item If $\cale\ge\calf$ is an extension of index prime to $p$ (i.e., 
$\cale$ is saturated and $\calf$ is normal of index prime to $p$ in 
$\cale$), then $\calf$ is characteristic in $\cale$.

\end{enumerate}
\end{Thm}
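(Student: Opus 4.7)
The proof is an assembly of results: I would synthesize the case-by-case computations in Propositions \ref{t:tame.simple}, \ref{p:2'-index}, \ref{p:p'-ind-1}, and \ref{p:p'-ind-2}, then extract the refinements (c.1) and (c.2) from the explicit structural data in Tables \ref{tbl:m-mu-theta} and \ref{tbl:AutG(A)-except}.

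The bulk of the case analysis is immediate. If $G$ is alternating, sporadic, or in $\Lie(p)$ (or $\lie2F4(2)'$ with $p=2$), Proposition \ref{t:tame.simple} yields either $S\nsg\calf$ (case (a)) or $|\quotfus\calf|\le2$ with $O^{p'}(\calf)$ simple and realized by a known simple group (case (d)). If $G\in\Lie(q)$ with $q\ne p=2$, Proposition \ref{p:2'-index} again yields (a) or (d). If $G$ is exceptional of Lie type in cross characteristic with $p$ odd, Proposition \ref{p:p'-ind-2} gives (a); case (b) of the present theorem, precisely when $G\cong G_2(q)$ with $q\equiv\pm1\pmod9$, in which case $|O_3(\calf)|=3$ and $O^{3'}(\calf)$ is realized by $\SL_3^\pm(q)$; or (d) with $\calf$ itself simple.

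The substantive work is for $G$ classical in cross characteristic with $p$ odd, handled by Proposition \ref{p:p'-ind-1}: there $O^{p'}(\calf)$ is simple and $\quotfus\calf$ is cyclic. The split between (c) and (d) is governed by the parameter $\mu=\lcm(2,\ord_p(\gee q))$ from Table \ref{tbl:m-mu-theta}: when $q\equiv 0,\pm1\pmod p$ one has $\mu\le 2$, and $O^{p'}(\calf)$ is realized by the appropriate covering classical group via Lemmas \ref{equiv-simp-1}, \ref{equiv-simp-2}, and \ref{diag-aut}, placing us in (d). When $q\not\equiv 0,\pm1\pmod p$, one has $\mu\ge 3$; I would then invoke Lemma \ref{l:exotic} on $O^{p'}(\calf)$ with $m=r=\mu$, since $\Aut_{O^{p'}(\calf)}(A)=G(\mu,\mu,\kappa)\subseteq G(\mu,1,\kappa)$, with $A$ homocyclic and $\kappa\ge p$ as recorded in the table, and since simplicity of $O^{p'}(\calf)$ forces $A\nnsg O^{p'}(\calf)$. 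This establishes case (c).

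For the refinements: in (c.1), I would take $G_0$ to be the appropriate quasisimple covering classical group in case (d) of Proposition \ref{p:p'-ind-1} (where $\calf_0<\calf$ has index $2$) and $G_0=G$ with $\calf_0=\calf$ in cases (a)--(c). The saturated fusion systems $\cale$ over $S$ with $O^{p'}(\cale)=O^{p'}(\calf)$ correspond by Proposition \ref{p:thetaF}(b) to subgroups of the cyclic quotient $\autf(A)/\Aut_{O^{p'}(\calf)}(A)$; those containing $\Aut_{\calf_0}(A)$ are realized by groups $H\supseteq G_0$ with $p\nmid|H/G_0|$ obtained by adjoining diagonal automorphisms (via Lemma \ref{diag-aut}), while those that do not still satisfy the hypotheses of Lemma \ref{l:exotic} with $r>2$, and are therefore exotic. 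For (c.2), any extension $\cale\ge\calf$ of index prime to $p$ is classified by Lemma \ref{F_0<|F_i}, and $\Aut(\cale)$ permutes the saturated subsystems of $\cale$ of index prime to $p$ with the same $O^{p'}$; since $\calf$ is uniquely characterized among these by the position of $\Aut_\calf(A)$ inside $\Aut_\cale(A)$ relative to $\Aut_{O^{p'}(\calf)}(A)=G(\mu,\mu,\kappa)$, this characterization is preserved by $\Aut(\cale)$, so $\calf$ is characteristic in $\cale$. The main obstacle is the bookkeeping in (c.1): one must check family-by-family that intermediate subsystems are separated into realizable and exotic exactly by the $\calf_0$-containment condition, which is routine given Lemmas \ref{diag-aut} and \ref{l:exotic}.
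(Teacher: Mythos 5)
Your case analysis for (a), (b), (d) and the exoticness in (c) tracks the paper: assemble Propositions \ref{t:tame.simple}, \ref{p:2'-index}, \ref{p:p'-ind-1}, \ref{p:p'-ind-2}, and in the range $q\not\equiv0,\pm1$ (mod $p$) apply Lemma \ref{l:exotic} to $O^{p'}(\calf)$ with $r=m>2$ (ruling out $A\nsg O^{p'}(\calf)$ by simplicity). That part is fine.

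The genuine gap is in (c.1), and it begins with your choice of $\calf_0$ and $G_0$, which is inverted. The correct $\calf_0$ is the subsystem whose automizer on $A$ is $G(\mu,\gcd(2,\mu),\kappa)$; by Table \ref{tbl:m-mu-theta} this is $\calf$ itself exactly in case (d) of Proposition \ref{p:p'-ind-1} (and in case (a) with $\mu$ odd), while in cases (b), (c), and (a) with $\mu$ even it is the \emph{proper} index-$2$ subsystem of $\calf$, realized not by a covering of $G$ but by the simple orthogonal group $\POmega_{2n'}^{\gee}(q)$ with $q^{n'}\equiv\gee$ (mod $p$). With your assignment the asserted dichotomy fails: for $G\cong\PSp_{2n}(q)$ with $q\not\equiv0,\pm1$ (mod $p$), the subsystem $\cale<\calf$ with $\Aut_\cale(A)=G(\mu,2,\kappa)$ satisfies $O^{p'}(\cale)=O^{p'}(\calf)$ and $\cale\ngeq\calf_0=\calf$, yet it is realizable (by the orthogonal group), so it is not exotic; conversely, in case (d) the index-$2$ subsystem below $\calf$ has automizer $G(\mu,4,\kappa)$ when $4\mid\mu$ and is exotic by Lemma \ref{l:exotic}, so no finite group $G_0$ realizes your proposed $\calf_0$ at all.

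Beyond that, the mechanism you cite for classifying and realizing the systems $\cale$ is not adequate. Proposition \ref{p:thetaF}(b) only parametrizes subsystems of index prime to $p$ \emph{inside} a fixed saturated system; the $\cale$ in (c.1) are arbitrary extensions of $O^{p'}(\calf)$ of index prime to $p$, and a priori need not be comparable with $\calf$. The paper must first bound $\Aut_\cale(A)$ inside $N_{\Aut(A)}(G(m,m,\kappa))$ and, using that $\Aut_{O^{p'}(\calf)}(A)$ has index prime to $p$, inside $G(m,1,\kappa)\gen{\omega\cdot\Id_A}$; then realize the maximal case by a concrete group $G_1\ge G_0$. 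This is where the real work lies, and your ``adjoin diagonal automorphisms via Lemma \ref{diag-aut}'' cannot do it: Lemma \ref{diag-aut} says precisely that diagonal automorphisms do \emph{not} change the fusion system, and the element acting as $\omega\cdot\Id_A$ must come from a field automorphism $\psi_{q_0}$, which forces the Dirichlet-prime argument and the replacement of $q$ by $q^*=q_0^a$ (justified by \cite[Theorem A(a)]{BMO1}). Finally, identifying $\cale$ with $\calf_S(H)$ (equality, not just isomorphism, which is what the containment $\cale\ge\calf_0$ requires) needs Lemma \ref{Op'F1=Op'F2} together with the vanishing $H^1(G(m,m,\kappa);A)=0$; the same rigidity input is what makes your sketch of (c.2) rigorous, where your appeal to $\Aut_\calf(A)$ being ``uniquely characterized by its position'' is exactly the point that has to be proved (the paper does it by exhibiting $\autf(A)$ as generated by $\Aut_{O^{p'}(\calf)}(A)$ and a canonically described subgroup of $O_{p'}(\Aut_\cale(A))$).
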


\begin{proof} Assume throughout the proof that point (a) does not hold; 
i.e., that $S\nnsg\calf$. By Propositions \ref{t:tame.simple}, 
\ref{p:2'-index}, and \ref{p:p'-ind-2}, point (b) or (d) holds except 
possibly when $p$ is odd and $G$ is one of the classical groups 
$\PSL_n^\pm(q)$, $\PSp_{2n}(q)$, $\POmega_{2n+1}(q)$, or 
$\POmega_{2n}^\pm(q)$ for $p\nmid q$. So assume we are in one of those 
cases. In particular, $O^{p'}(\calf)$ is simple by Proposition 
\ref{p:p'-ind-1}.

If $G\cong\PSU_n(q)$ for some $n$ and $q$, then by Lemma 
\ref{equiv-simp-2}(a), $G\sim_p\PSL_n(q^\vee)$ for some prime power 
$q^\vee$ with $\ord_p(q^\vee)=\ord_p(-q)$. So we can assume $G$ is 
linear, symplectic, or orthogonal.

\noindent\textbf{Case 1: } Assume $q\equiv\pm1$ (mod $p$). If $G=\PSL_n(q)$ 
and $q\equiv1$ (mod $p$), then $O^{p'}(\calf)=\calf$ by Proposition 
\ref{p:p'-ind-1}(a). If $G=\PSL_n(q)$ and $q\equiv-1$ (mod $p$), so that 
$\ord_p(q)=2$, then set $k=[n/2]$. We have 
	\[ G \sim_p \GL_n(q) \sim_p \GL_{2k}(q) \sim_p \GO_{2k}^\gee(q) \]
for $\gee=(-1)^k$ by Lemma \ref{equiv-simp-1}(a,d). Also, $O^{p'}(\calf)$ 
has index $2$ in $\calf$ by Proposition \ref{p:p'-ind-1}(a,d), and it is 
realized by the simple group $\POmega_{2n}^\gee(q)$ by Table 
\ref{tbl:m-mu-theta} and Lemma \ref{theta-F-A}(c). 

If $G$ is one of the groups $\PSp_{2n}(q)$ or $\varOmega_{2n+1}(q)$ for 
$q\equiv\pm1$ (mod $p$), or $\POmega_{2n}^\gee(q)$ for $q\equiv\pm1$ and 
$q^n\not\equiv\gee$ (mod $p$), then a similar argument shows that 
$O^{p'}(\calf)$ has index $2$ in $\calf$ and is realizable by a simple 
orthogonal group. If $G=\POmega_{2n}^\gee(q)$ where $q\equiv\pm1$ and 
$q^n\equiv\gee$ (mod $p$), then $O^{p'}(\calf)=\calf$ by Proposition 
\ref{p:p'-ind-1}(c). 

Thus $O^{p'}(\calf)$ is realizable (and we are in the situation of 
(c)) whenever $G$ is one of the classical groups defined over $\F_q$ 
and $q\equiv\pm1$ (mod $p$). 

\noindent\textbf{Case 2: } Assume $q\not\equiv0,\pm1$ (mod $p$), where $G$ 
is again one of the classical groups $\PSL_n(q)$, $\PSp_{2n}(q)$, 
$\POmega_{2n+1}(q)$, or $\POmega_{2n}^\pm(q)$. In particular, $p\ge5$. Set 
$m=\ord_p(q)$ and $\ell=v_p(q^m-1)$; thus $\ell\ge1$ and $2<m\mid(p-1)$. By 
Table \ref{tbl:m-mu-theta}, there are $\kappa\ge1$ and $A\nsg S$ weakly 
closed in $\calf$ such that $A\cong(C_{p^\ell})^\kappa$ and 
$\Aut_{O^{p'}(\calf)}(A)\cong G(m,m,\kappa)$. So $O^{p'}(\calf)$ is exotic 
by Lemma \ref{l:exotic}.

\smallskip

\noindent\textbf{Proof of (c.1): } Assume $\cale$ is a saturated fusion system over 
$S$ such that $O^{p'}(\cale)=O^{p'}(\calf)$. Then 
$\Aut_{O^{p'}(\calf)}(A)\nsg\Aut_{\cale}(A)$. Let $\{a_1,\dots,a_\kappa\}$ 
be a basis for $A$ as a $\Z/p^\ell$-module such that with respect to this 
basis, $\Aut_{O^{p'}(\calf)}(A)=G(m,m,\kappa)$: the group of monomial 
matrices generated by the permutation matrices, and the diagonal matrices 
of order dividing $m$ and of determinant $1$. 

Now, $\Aut_{\cale}(A)\le N_{\Aut(A)}(\Aut_{O^{p'}(\calf)}(A))= 
G(m,1,\kappa)\gen{(\Z/p^\ell)^\times\cdot\Id_A}$: 
each $\alpha\in\Aut(A)$ that normalizes $G(m,m,\kappa)$ permutes the 
subgroups $\gen{a_i}$ and thus has a monomial matrix whose nonzero 
entries differ pairwise by $m$-th roots of unity. Furthermore, since 
$\Aut_{O^{p'}(\calf)}(A)$ has index prime to $p$ in $\Aut_\cale(A)$, we 
have 
	\beqq \Aut_{\cale}(A) \le G(m,1,\kappa) \gen{\omega\cdot\Id_A}, 
	\label{e:G<w>} \eeqq
for $\omega\in\Z$ whose class in $(\Z/p^{\ell+1})^\times$ has order exactly 
$p(p-1)$.

By Dirichlet's theorem on primes in arithmetic sequences, we can 
choose a prime $q_0\equiv\omega$ (mod $p^{\ell+1}$). In particular, 
$v_p(q_0^{p-1}-1)=\ell$ and the class of $q^\vee$ generates 
$(\Z/p)^\times$. Set $q^*=q_0^a$, where $a$ is chosen so that $p\nmid a$ 
and $q^*\equiv q$ (mod $p$). Upon replacing $q$ by $q^*$ and $G=\gg^\pm(q)$ 
by $\gg^\pm(q^*)$, we can arrange that $q=q_0^a$ without changing the 
fusion system $\calf$ (see \cite[Theorem A(a)]{BMO1}).

Let $G_0\nsg G_1$ be as defined in Table \ref{tbl:G0-G1}, and set 
$\calf_0=\calf_S(G_0)$. In the first two cases shown in the table, the 
isomorphism in the third column follows from Table \ref{tbl:m-mu-theta}, 
and $G_0=G$ and $\calf_0=\calf$. In the third 
case, $G\sim_p\PGO_{2n'}^\gee(q)$ by Lemma \ref{equiv-simp-1} (and Table 
\ref{tbl:m-mu-theta}), and we identify $S$ as a subgroup of 
$G_0=\POmega_{2n'}^\gee(q)$ in such a way that $\calf_0\le\calf$. Then 
$\Aut_{\calf_0}(A)=G(m,2,\kappa)$ and hence $\calf_0$ has index $2$ in 
$\calf$. In all cases, $\psi_{q_0}$ denotes the field automorphism 
$(x\mapsto x^{q_0})$, this acts on $A$ via $(a\mapsto 
a^{q_0}=a^\omega)$, and so 
$\Aut_{G_1}(A)=G(m,1,\kappa)\gen{\omega\cdot\Id}$. 
	\begin{table}[ht] 
	\[ \renewcommand{\arraystretch}{1.2} \setlength{\arraycolsep}{4mm}
	\begin{array}{cc|ccc}
	m & \autf(A) & G & G_0 & G_1 \\\hline
	\textup{odd} & G(m,1,\kappa) & G\cong\PSL_n(q) & \PSL_n(q) & 
	\PSL_n(q)\gen{\psi_{q_0}} \\
	\textup{even} & G(m,2,\kappa) & G\cong\POmega_{2n'}^\gee(q) & 
	\POmega_{2n'}^\gee(q) & \PGO_{2n'}^\gee(q)\gen{\psi_{q_0}} \\
	\textup{even} & G(m,1,\kappa) & G\sim_p\PGO_{2n'}^\gee(q) & 
	\POmega_{2n'}^\gee(q) & \PGO_{2n'}^\gee(q)\gen{\psi_{q_0}} 
	\end{array} \]
	\caption{\small{In the last two cases, $n'\ge p$ and $\gee=\pm1$ are 
	such that $q^{n'}\equiv\gee$ (mod $p$).}}
	\label{tbl:G0-G1}
	\end{table}

Thus in all three cases, 
	\beqq G(m,m,\kappa) = \Aut_{O^{p'}(\calf)}(A) \le \Aut_\cale(A)
	\le G(m,1,\kappa)\gen{\omega\cdot\Id} = \Aut_{G_1}(A), 
	\label{e:G<Aut<G} \eeqq
where the second inequality follows from \eqref{e:G<w>}. Hence 
	\beqq \Aut_\cale(A)\cap G(m,1,\kappa) = G(m,r,\kappa) \qquad 
	\textup{for some $r\mid m$.} \label{e:G(m,r,k)} \eeqq
If $r>2$, then $\cale$ is exotic by Lemma \ref{l:exotic}, and 
$\cale\ngeq\calf_0$ since $\Aut_{\calf_0}(A)=G(m,\gcd(m,2),\kappa)$. 

Now assume $r\le2$, and hence that $\Aut_\cale(A)\ge\Aut_{\calf_0}(A)$. By 
\eqref{e:G<Aut<G}, there is $H\le G_1$ such that $G_0\nsg H$ and 
$\Aut_H(A)=\Aut_\cale(A)$. Since 
$O^{p'}(\calf_S(H))=O^{p'}(\calf_0)=O^{p'}(\cale)$, we have 
$\cale\cong\calf_S(H)$ by Lemma \ref{Op'F1=Op'F2}. 

We claim that this is, in fact, an equality (and hence that 
$\cale\ge\calf_0$). 
Set $\Gamma=\Aut_{O^{p'}(\calf)}(A)=G(m,m,\kappa)$ and 
$\Gamma_0=O_{p'}(\Gamma)\cong(C_m)^{\kappa-1}$ for short.
The five-term exact sequence for the cohomology of $\Gamma$ as an extension 
of $\Gamma_0$ by $\Gamma/\Gamma_0$ (see \cite[p. 110]{Benson2}) 
restricts to an exact sequence 
	\beqq 0 \too 
	\underset{=0}{H^1(\Gamma/\Gamma_0;H^0(\Gamma_0;A))} 
	\Right3{} H^1(\Gamma;A) \Right3{} 
	\underset{=0}{H^0(\Gamma/\Gamma_0;H^1(\Gamma_0;A))} 
	\label{e:H1=0} \eeqq
in which the first term is zero since 
$H^0(\Gamma_0;A)=C_A(\Gamma_0)=1$, and the last term is zero 
since $\gcd(|\Gamma_0|,|A|)=1$. Thus $H^1(\Gamma;A)=0$, and so 
$\cale=\calf_S(H)$ by Lemma \ref{Op'F1=Op'F2} again. In particular, 
$\cale\ge\calf_0$ in this case, and this finishes the proof of (a.1).

\smallskip

\noindent\textbf{Proof of (c.2): } Now assume $\cale\ge\calf$ is an 
extension of index prime to $p$, and let $\alpha$ be 
an automorphism of $\cale$. Thus $\alpha\in\Aut(S)$ is such that 
$\9\alpha\cale=\cale$, and in particular, 
$\9\alpha(O^{p'}(\calf))=O^{p'}(\calf)$, $\alpha(A)=A$, and 
$\9\alpha(\Aut_{\cale}(A))=\Aut_{\cale}(A)$. Since by \eqref{e:G<w>}, $\autf(A)$ is 
generated by $\Aut_{O^{p'}(\calf)}(A)$ together with either the $m$-torsion 
in $O_{p'}(\Aut_{\cale}(A))$ or the unique normal subgroup of index $2$ 
in that subgroup, we also have $\9\alpha(\autf(A))=\autf(A)$. Since 
$H^1(\Aut_{O^{p'}(\calf)}(A);A)=0$ by \eqref{e:H1=0}, we have 
$\9\alpha\calf=\calf$ by Lemma \ref{Op'F1=Op'F2}. Since 
$\alpha\in\Aut(\cale)$ was arbitary, $\calf$ is characteristic in 
$\cale$. 
\end{proof}





\begin{thebibliography}{AOV}

\bibitem[AF]{AF} J. Alperin \& P. Fong, Weights for symmetric and general 
linear groups, J. Algebra 131 (1990), 2--22


\bibitem[AOV]{AOV1} K. Andersen, B. Oliver, \& J. Ventura, Reduced, tame, 
and exotic fusion systems, Proc. London Math. Soc. 105 (2012), 87--152


\bibitem[A2]{A-gfit} M. Aschbacher, The generalized Fitting subsystem of a 
fusion system, Memoirs Amer. Math. Soc. 209 (2011), nr. 986

\bibitem[AKO]{AKO} M. Aschbacher, R. Kessar, \& B. Oliver, Fusion systems 
in algebra and topology, Cambridge Univ. Press (2011)

\bibitem[Be2]{Benson2} D. Benson, Representations and cohomology II: 
cohomology of groups and modules, Cambridge Univ. Press (1991)

\bibitem[5a1]{BCGLO1} C. Broto, N. Castellana, J. Grodal, R. Levi, \&
B. Oliver, Subgroup families controlling $p$-local finite groups, Proc.
London Math. Soc. 91 (2005), 325--354

\bibitem[5a2]{BCGLO2} C. Broto, N. Castellana, J. Grodal, R. Levi, \&
B. Oliver, Extensions of $p$-local finite groups, Trans. Amer. Math. Soc.
359 (2007), 3791--3858

\bibitem[BM]{BM} C. Broto, \& J. M{\o}ller, Chevalley p-local finite 
groups. Algebraic \& Geometric Topology 7 (2007), 1809--1919

\bibitem[BMO1]{BMO1} C. Broto, J. M{\o}ller, \& B. Oliver, Equivalences 
between fusion systems of finite groups of Lie type, Jour. Amer. Math. Soc. 
25 (2012), 1--20

\bibitem[BMO2]{BMO2} C. Broto, J. M{\o}ller, \& B. Oliver, Automorphisms of 
fusion systems of finite simple groups of Lie type, Memoirs Amer. Math. 
Soc. 1267 (2020), 1--117


\bibitem[Ca]{Carter} R. Carter, Simple groups of Lie type, Wiley (1972)


\bibitem[Cr]{Craven} D. Craven, The theory of fusion systems, Cambridge 
Univ. Press (2011)

\bibitem[COS]{indp2} D. Craven, B. Oliver, \& J. Semeraro, Reduced fusion 
systems over $p$-groups with abelian subgroup of index $p$: II, Advances in 
Mathematics 322 (2017), 201--268

\bibitem[DRV]{DRV} A. D\'iaz, A. Ruiz, \& A. Viruel, All $p$-local finite 
groups of rank two for odd prime $p$, Trans. Amer. Math. Soc. 359 (2007), 
1725--1764

\bibitem[FF]{FF} R. Flores \& R. Foote, Strongly closed subgroups of finite 
groups, Adv. Math. 222 (2009), 453--484

\bibitem[G]{Gorenstein} D. Gorenstein, Finite groups, Harper \& Row (1968)

\bibitem[GL]{GL} D. Gorenstein \& R. Lyons, The local structure of finite 
groups of charateristic $2$ type, Memoirs Amer. Math. Soc. 276 (1983)

\bibitem[GLS3]{GLS3} D. Gorenstein, R. Lyons, \& R. Solomon, The 
classification of the finite simple groups, nr. 3, Amer. Math. Soc. surveys 
and monogr. 40 \#3 (1997)

\bibitem[Gr]{Griess} R. Griess, Elementary abelian $p$-subgroups of 
algebraic groups, Geometriae Dedicata 39 (1991), 253--305

\bibitem[Gr]{Grodal} J. Grodal, Endotrivial modules for finite groups via 
homotopy theory, arXiv 1608.00499

\bibitem[Ja]{Jacobson} N. Jacobson, Basic algebra I (2nd ed.), W. H. 
Freeman and Co. (1985)

\bibitem[Kl]{Kleidman-3D4} P. Kleidman, The maximal subgroups of the 
Steinberg triality groups $\lie3D4(q)$ and of their automorphism groups, J. 
Algebra 115 (1988), 182--199

\bibitem[La]{Lang} S. Lang, Algebra, 3rd ed., Springer-Verlag (2002)

\bibitem[LSS]{LSS} M. Liebeck, J. Saxl, \& G. Seitz, Subgroups of maximal 
rank in finite exceptional groups of Lie type, Proc. London Math. Soc. 65 
(1992), 297--325


\bibitem[Ma]{Malle} G. Malle, The maximal subgroups of $\lie2F4(q^2)$, J. 
Algebra 139 (1991), 52--69


\bibitem[O1]{indp1}  B. Oliver, Simple fusion systems over $p$-groups 
with abelian subgroup of index $p$: I, J. Algebra 398 (2014), 527--541

\bibitem[O2]{O-rk4} B. Oliver, Reduced fusion systems over $2$-groups of 
sectional rank at most $4$, Memoirs Amer. Math. Soc. 239 (2015), nr. 1131

\bibitem[O3]{sportame} B. Oliver, Automorphisms of fusion systems of 
sporadic simple groups, Memoirs Amer. Math. Soc. 1267 (2020), 119--163

\bibitem[OR]{indp3} B. Oliver \& A. Ruiz, Reduced fusion systems over 
$p$-groups with abelian subgroup of index $p$: III, Proceedings of the 
Royal Society of Edinburgh Section A 150 (2020), 1187--1239


\bibitem[OV]{OV2} B. Oliver \& J. Ventura, Saturated fusion systems over 
$2$-groups, Trans. Amer. Math. Soc. 361 (2009), 6661--6728


\bibitem[Pg]{Puig} L. Puig, Frobenius categories, \emph{J. Algebra}, 
{\bf303} (2006), 309--357.


\bibitem[Ru]{Ruiz} A. Ruiz, Exotic normal fusion subsystems of general 
linear groups, J. London Math. Soc. (2) 76 (2007), 181--196

\bibitem[RV]{RV} A. Ruiz \& A. Viruel, The classification of $p$-local finite 
groups over the extraspecial group of order $p^3$ and exponent $p$, Math. Z. 
248 (2004), 45--65

\bibitem[Se]{Serre} J.-P. Serre, A course in arithmetic, Springer-Verlag 
(1973)

\bibitem[Ta]{Taylor} D. Taylor, The geometry of the classical groups, 
Heldermann Verlag (1992)


\end{thebibliography}
\end{document}